\newtheorem{axiom}{Conjecture}
\newtheorem{remark}{Remark}
\newtheorem{proposition}{Proposition}
\title{Front propagation of a sexual population with evolution of dispersion: a formal analysis
\footnote{European Research Council (ERC) under the European Union’s Horizon 2020 research and innovation program (grant agreement No 639638) and the ANR projects NONLOCAL (ANR-14-CE25-0013) and RESISTE (ANR-18-CE45-0019).}}
\author{L\'eonard Dekens\thanks{Institut Camille Jordan, UMR5208 UCBL/CNRS, Université de Lyon; INRIA, Dracula Team. (\href{mailto:dekens@math.univ-lyon1.fr}{dekens@math.univ-lyon1.fr}).} \and Florian Lavigne \thanks{Aix Marseille Univ, CNRS, Centrale Marseille, I2M, Marseille, France (\href{mailto:florian.lavigne@univ-amu.fr}{florian.lavigne@univ-amu.fr}, \href{mailto:florian.lavigne@inra.fr}{florian.lavigne@inra.fr}).}}
\begin{document}
\maketitle

%% ------------------------------------------------------------------
%% ABSTRACT
%% ------------------------------------------------------------------
\begin{abstract}
 The adaptation of biological species to their environment depends on their traits. When various biological processes occur (survival, reproduction, migration, \textit{etc.}), the trait distribution may change with respect to time and space. In the context of invasions, when considering the evolution of a heritable trait that encodes the dispersive ability of individuals, the trait distribution develops a particular spatial structure that leads to the acceleration of the front propagation. That phenomenon is known as spatial sorting. Many biological examples can be cited like the bush cricket in Britain, the cane toad invasion in Australia or the common myna one in South Africa. 
 
 Adopting this framework, recent mathematical studies have led to highlight the influence of the reproductive mode on the front propagation. Asexual populations have been shown to spread with an asymptotic rate of $t^{3/2}$ in a minimal reaction-diffusion model, whereas the analogous rate for sexual populations is of $t^{5/4}$ (where $t$ denotes the time). However, the precise description of the behaviour of the front propagation in the sexual case is still an open question. 
 
 The aim of this paper is to give precise approximations for large times of its position, as well as some features of the local trait distribution at the front. To do so, we solve explicitly the asymptotic problem derived formally. Numerical simulations are shown to confirm these calculations.
\end{abstract}

  {\textbf{Keywords}: Partial Differential Equation, Wave Front, Invasion, Sexual Reproduction, Evolution of Dispersion, Spatial Sorting.}

  {\textbf{AMS}: 35Q92, {\color{black}92D15, 92D25, 35R09, 35B40, 35K57.}}
%% ------------------------------------------------------------------
%% END HEADER
%% ------------------------------------------------------------------

\section{Introduction}
\label{chap_cemracs_sex:sec:Intro}

Individuals can be more or less adapted to their environment, depending on their traits. Various processes shape the trait distributions: some of them intervene locally, like survival and reproduction, and others highly depend on the spatial structure of the environment, like migration. Bio\-lo\-gi\-cal invasions are an example of a process where the role of space is structuring. As the combination of locally limited amount of resources and large available inhabited space tends to drive individuals further away, the ability to explore can be selected upon. Morphological features can therefore evolve to increase dispersion: closer to the front of the invasion, cane toads in Australia tends to develop longer legs \cite{Phi06}, common myna birds in South Africa and \textit{conocephalus discolor} bush cricket in Britain, larger wings \cite{Berthouly2012,Tho01}.

However, that process is not homogeneous in space: individuals with higher dispersal ability are typically located at the range expansion front. This phenomenon is called \textit{spatial sorting}. Its relationship with the evolution of dispersion has been studied by biologists for the past two decades \cite{Hall17,ShiBro11,Tho01,Tra02,dytham1}. More recently, ma\-the\-ma\-ti\-cal studies have been quantifying its influence on the asymptotic speed of the invasion. {\color{black}Our model equation describes the effects of evolution of a trait $\boldsymbol \theta>1$, which determines the dispersion rate, in space ($\boldsymbol x\in \mathbb R$) and through time ($\boldsymbol t \ge 0$) in a po\-pu\-la\-tion subject to sexual reproduction and competition. The trait density $f(\boldsymbol t, \boldsymbol x,\boldsymbol \theta)$ evolves according to}:
\begin{equation}
  \partial_{\boldsymbol{t}} \boldsymbol{f}(\boldsymbol{t},\boldsymbol{x},\boldsymbol{\theta})= \boldsymbol{r} \, [\,\underbrace{\boldsymbol{B}[\boldsymbol{f}](\boldsymbol{t},\boldsymbol{x},\boldsymbol{\theta})}_{\text{reproduction}}-\underbrace{ \boldsymbol{K}^{-1}\boldsymbol{\varrho}(\boldsymbol{t}, \boldsymbol{x})\boldsymbol{f}(\boldsymbol{t},\boldsymbol{x},\boldsymbol{\theta})}_{\text{competition}}\,]+ \underbrace{\boldsymbol{\theta} \Delta_{\boldsymbol{x}} \boldsymbol{f}(\boldsymbol{t}, \boldsymbol{x}, \boldsymbol{\theta})}_{\text{dispersion}},
  \label{chap_cemracs_sex:eq:generalintro}
\end{equation}
{\color{black}for $\boldsymbol {\Delta_x} $ the Laplace operator with respect to $\boldsymbol x$.}

When the dispersal rate is possibly unbounded, the relationship between the front propagation and sustained spatial sorting leads to an acceleration of front propagation \cite{Ber15,Bouin_Calvez_Meunier_Mirrahimi_Perthame_Raoul_Voituriez_2012,Bou17,calvez2018nonlocal}, contrary to the case of constant dispersion for which it is well established that the front expands asymptotically at constant speed \cite{AroWei78, BerHamNad08, Fang11, Gen06, Gou00, HamRyz13, Mirrahimi_Raoul_2013}.

To our knowledge, analytical results describing the asymptotic accelerating rate of propagation exist only for asexual (clonal) populations (\textit{e.g.}, see \cite{Ber15,Bou17,calvez2018nonlocal}), for {\color{black}which} the reproduction operator in \eqref{chap_cemracs_sex:eq:generalintro} is :
\[\boldsymbol{B}[\boldsymbol{f}] = \boldsymbol{f}+\boldsymbol{\sigma^2\Delta_\theta \,f},\]
for some constant $\boldsymbol{\sigma^2}\ge 0$ depending on the mutation variance and mutation rate {\color{black} and for $\boldsymbol {\Delta_\theta} $ the Laplace operator with respect to $\boldsymbol \theta$}. In this case, the position of the population range asymptotically expands as $t^{3/2}$ (see \cite{Ber15,Bouin_Calvez_Meunier_Mirrahimi_Perthame_Raoul_Voituriez_2012,Bou17,calvez2018nonlocal} for more details). Furthermore, the precise asymptotic position of the front has been derived in \cite{calvez2018nonlocal}, by specifying the prefactor term. The value of this prefactor is sensitive to how the competition is modelled : when it is local in trait, it has been shown to be equal to a larger value \cite{Ber15,Bouin_Calvez_Meunier_Mirrahimi_Perthame_Raoul_Voituriez_2012,Bou17}. 

However, as reproductive mode is thought to potentially significantly influence the rate of propagation \cite{Williams_Hufbauer_Miller_2019}, we take interest into invasions of sexually re\-pro\-du\-cing populations. An analogous model as for asexual populations can be built u\-sing Fisher's in\-fi\-ni\-te\-si\-mal model, a model of allelic segregation that has been studied and used for a century in quantitative genetics, a branch of evolutionary biology  \cite{Barton_Etheridge_Veber_2017,Bulmer_1972,Fisher,Lange_1978,Tuf00,Turelli_2017,Turelli_Bartont}. This model has also been used to model sexually repro\-du\-cing populations in several integro-differential studies \cite{Boui17,Calvez_Garnier_Patout_2019,Mirrahimi_Raoul_2013,Raoul_2017}, with the following reproduction operator in \eqref{chap_cemracs_sex:eq:generalintro}:
\newcommand{\thmin}{\theta_{\min}}
\newcommand{\R}{\mathbb R}
\newcommand{\md}{\mathrm{d}}
\begin{equation*}
\boldsymbol{B}[\boldsymbol{f}](\boldsymbol{t},\boldsymbol{x},\boldsymbol{\theta})
= \iint_{(\boldsymbol{\theta_{\min}},\infty)^2} \mathcal{G}_{\boldsymbol{\lambda}}\left[ \boldsymbol{\theta} - \frac{\boldsymbol{\theta}_1+\boldsymbol{\theta}_2 }{2}\right] \, \frac{\boldsymbol{f}(\boldsymbol{t}, \boldsymbol{x}, \boldsymbol{\theta}_1) \, {\boldsymbol{f}(\boldsymbol{t}, \boldsymbol{x}, \boldsymbol{\theta}_2)}}{\boldsymbol{\varrho}(\boldsymbol{t},\boldsymbol{x})} \md\boldsymbol{\theta}_1\, \md\boldsymbol{\theta}_2.
\end{equation*}

It assumes that the trait of the offspring is given by the mean parental trait up to a random normal deviation given by $\mathcal{G}_{\boldsymbol{\lambda}}$ with constant segregational variance $\boldsymbol{\lambda^2}$. Using this model, the authors of the report \cite{proceeding} predicted and numerically confirmed an asymptotic invasion rate of $t^{5/4}$ for sexually reproducing populations.

However, to understand the complexity of the interplay between ecology and evolution in the dynamics of an invasion, the relationship between the propagation and the trait distribution has to be untangled. That requires to describe precisely the trait distribution and the effect of spatial sorting at the front of the invasion, which is the goal of this paper. First, we present our model and the explicit formula that we derive to approximate the position of the front propagation and its local trait distribution at large times (\cref{chap_cemracs_sex:sec:Modelandresult}). Next, we present numerical simulations that confirm this formula (\cref{chap_cemracs_sex:sec:numeric}). Finally, we derive formally the limit problem for large times and find an explicit solution to it (\cref{chap_cemracs_sex:sec:proof}).

\section{Deterministic model}
\label{chap_cemracs_sex:sec:Modelandresult}

In this section, we present the integro - differential model that we use and state our formal result as an approximation of the solutions of the resulting equation. The population is described according to its location $\boldsymbol{x} \in \R$ and its dispersive trait $\boldsymbol{\theta} \in (\boldsymbol{\thmin},+ \infty)$, with $\boldsymbol{\thmin}>0$. Here we are interested by the evolution of the density $\boldsymbol{f}(\boldsymbol{t},\boldsymbol{x},\boldsymbol{\theta})$ of individuals being at time $\boldsymbol{t}\ge 0$ at the location $\boldsymbol{x}\in \R$, presenting the trait $\boldsymbol{\theta}$. We also assume that, initially, the density is compactly supported.

\textbf{Our model.} The evolution of the density $\boldsymbol{f}(\boldsymbol{t},\boldsymbol{x},\boldsymbol{\theta})$ can be modeled with the following reaction - diffusion equation for all $\boldsymbol{t}>0$, $\boldsymbol{x}\in\mathbb R$ and $\boldsymbol{\theta}>\boldsymbol{\theta_{\min}}$:
\begin{equation}
  \partial_{\boldsymbol{t}} \boldsymbol{f}(\boldsymbol{t},\boldsymbol{x},\boldsymbol{\theta})= \boldsymbol{r} \, \left[\boldsymbol{B}[\boldsymbol{f}](\boldsymbol{t},\boldsymbol{x},\boldsymbol{\theta}) - \boldsymbol{K}^{-1}\boldsymbol{\varrho}(\boldsymbol{t}, \boldsymbol{x})\boldsymbol{f}(\boldsymbol{t},\boldsymbol{x},\boldsymbol{\theta})\right]+ \boldsymbol{\theta} \Delta_{\boldsymbol{x}} \boldsymbol{f}(\boldsymbol{t}, \boldsymbol{x}, \boldsymbol{\theta}),
    \label{chap_cemracs_sex:eq:general}
\end{equation}
where $\boldsymbol{r}>0$ and $\boldsymbol{K}>0$ are fixed constants, and $\boldsymbol{\varrho}(\boldsymbol{t},\boldsymbol{x})\,:=\,\int_{\boldsymbol{\thmin}}^\infty \boldsymbol{f}(\boldsymbol{t},\boldsymbol{x},\boldsymbol{\theta})\,\md\boldsymbol{\theta}$ is the population size at $\boldsymbol{x}\in\R$ and time $\boldsymbol{t}>0$. We will detail the reaction term $\boldsymbol{B}[\boldsymbol{f}]$ later. At first, let us discuss the modelling motivation of each term.

First, the term $\boldsymbol{r} \, \left[\boldsymbol{B}[\boldsymbol{f}](\boldsymbol{t},\boldsymbol{x},\boldsymbol{\theta}) - \boldsymbol{K}^{-1}\boldsymbol{\varrho}(\boldsymbol{t}, \boldsymbol{x})\boldsymbol{f}(\boldsymbol{t},\boldsymbol{x},\boldsymbol{\theta})\right]$ is analogous to a logistic growth term that models \textit{reproduction} and \textit{competition}. More precisely, the reproduction term $\boldsymbol{B}[\boldsymbol{f}](\boldsymbol{t},\boldsymbol{x},\boldsymbol{\theta})$ represents the number of new individuals that are born with the trait $\boldsymbol{\theta}$ at time $\boldsymbol{t}\geq 0$ and position $\boldsymbol{x}\in\R$ and we will detail the modelling of the segregational process later. Moreover, at point $\boldsymbol{x}\in \R$ and at time $\boldsymbol{t}\geq 0$, there is a competition between individuals for resources, related to the parameter $\boldsymbol K$ which is a measure of the \textit{{carrying} capacity} of the environment. When the local population size at $\boldsymbol{x}$ is relatively small -- $\boldsymbol{\varrho}(\boldsymbol{t}, \boldsymbol{x})\ll\boldsymbol{K}$ -- the local population disposes of enough resources to allow an exponential - like growth, while, if $\boldsymbol{\varrho}(\boldsymbol{t},\boldsymbol{x})\gg \boldsymbol{K}$, then competition between individuals is strong, and consequently the local population size decreases. The constant $\boldsymbol{r}>0$ is therefore called \textit{growth rate at low density}.

Then, the diffusion term $\boldsymbol{\theta}\,\Delta_{\boldsymbol{x}} \boldsymbol{f}$ models the \textit{dispersion} phenomenon. Individuals are assumed to diffuse through space at each time $t$, at a rate given by the dispersive trait $\boldsymbol{\theta}\ge \boldsymbol{\thmin}$. When $\boldsymbol{\theta}$ gets larger, it models situations like having longer legs or bigger wings, which potentially give an advantage to explore a new environment faster. 

Finally, let us come back to the reproduction operator $\boldsymbol{B}[\boldsymbol{f}]$. We consider a monoecious population in which the individuals breed randomly and only with those at the same location $\boldsymbol{x}\in\R$. At time $\boldsymbol{t}$, an individual with trait $\boldsymbol{\theta}_1$ finds a mate with trait $\boldsymbol{\theta}_2$ with the probability density equal to the trait frequency at position $\boldsymbol{x}:$ $\boldsymbol{f}(\boldsymbol{t}, \boldsymbol{x}, \boldsymbol{\theta}_2) /\boldsymbol{\varrho}(\boldsymbol{t}, \boldsymbol{x})$. To model the segregation, we use Fisher's infinitesimal model, which classically states that the offspring trait differs from the mean parental trait $(\boldsymbol{\theta}_1+\boldsymbol{\theta}_2)/ 2$ according to a normal distribution with a segregrational variance $ \boldsymbol{\lambda}^2>0$ assumed to be constant and independent of the parental trait values. These assumptions imply the following formulation of the reproduction term:
\begin{equation*}
\boldsymbol{B}[\boldsymbol{f}](\boldsymbol{t},\boldsymbol{x},\boldsymbol{\theta})
= \iint_{(\boldsymbol{\thmin},\infty)^2} \mathcal{G}_{\boldsymbol{\lambda}}\left[ \boldsymbol{\theta} - {\boldsymbol{\theta}_1+\boldsymbol{\theta}_2 \over 2}\right] \, {\boldsymbol{f}(\boldsymbol{t}, \boldsymbol{x}, \boldsymbol{\theta}_1) \, {\boldsymbol{f}(\boldsymbol{t}, \boldsymbol{x}, \boldsymbol{\theta}_2)} \over \boldsymbol{\varrho}(\boldsymbol{t},\boldsymbol{x})} \md\boldsymbol{\theta}_1\, \md\boldsymbol{\theta}_2.
\end{equation*}
The term $\mathcal{G}_{\boldsymbol{\lambda}} [ \boldsymbol{\theta} - {(\boldsymbol{\theta}_1+\boldsymbol{\theta}_2) / 2}]$, symbolizing the stochasticity of the segregation process, is defined as a normalized Gaussian density with variance $\boldsymbol{\lambda}^2>0$, that is:
\begin{equation}\label{chap_cemracs_sex:def:gauss}
\mathcal{G}_{\boldsymbol{\lambda}}(\boldsymbol{\theta}) := {1\over \sqrt{2\pi \boldsymbol{\lambda}^2}} \, \exp\left[ - \ {\boldsymbol{\theta}^2\over 2\boldsymbol{\lambda}^2}\right] .
\end{equation}

Let us rescale the equation by setting :
\[t = \boldsymbol{r}\boldsymbol{t},\qquad x = \sqrt{\frac{\boldsymbol{r}}{\boldsymbol{\thmin}}}\, \boldsymbol{x},\qquad \theta =\, \frac{\boldsymbol{\theta}}{\boldsymbol{\thmin}}\,,\quad \hbox{and} \quad f(t,x,\theta) = \frac{\boldsymbol{\thmin}}{\boldsymbol{K}}\boldsymbol{f}(\boldsymbol{t},\boldsymbol{x},\boldsymbol{\theta}).\]
Then, we can simplify the previous PDE into:
\begin{equation}
\partial_t f(t, x, \theta) = B[f](t,x,\theta) - \varrho(t, x) f(t, x, \theta)+\theta \Delta_x f(t, x, \theta),
\label{chap_cemracs_sex:eq:PDE_apres_simplification_et_variance_generale}
\end{equation}
with the rescaled population size:
\[
\varrho(t,x) = \int_1^\infty f(t,x,\theta)\, \md \theta.
\]
By this simplification, the reproduction term is:
\begin{equation}
  B[f](t,x,\theta)=\iint_{(1,\infty)^2} \mathcal G_{ \lambda} \left[ \theta - {\theta_1+\theta_2 \over 2}\right] \, f(t, x, \theta_1) \, {f(t, x, \theta_2) \over \varrho(t,x)} \, \md\theta_1\, \md\theta_2,
\end{equation}
where $\mathcal G_\lambda$ is given by \eqref{chap_cemracs_sex:def:gauss}, and $\lambda = \boldsymbol{\lambda} /\boldsymbol{\thmin}$. One can notice the {\color{black}truncation} at the bottom level $\theta_{\min} = 1$, chosen for the sake of simplicity (note that $\theta_{\min}$ can only take positive values), which does not influence the long time asymptotics in the subsequent analysis as $\theta$ is expected to take large values at the front.

\textbf{Main result.} In this paper, we denote by $x\cdot J$, for some  $x\in \mathbb R$ and $J=[a,b]$, the interval $[xa,xb]$ and $|J|$ the length of the interval $J$. As some computations are only formal, we state our main result as a conjecture:

\begin{axiom}\label{chap_cemracs_sex:thm:main}
Define the constant
\begin{equation}
  y_c=4\left( {\lambda \over 3} \right)^{1/2} .
  \label{chap_cemracs_sex:form:yc}
\end{equation}
{There exists {\color{black}an interval of trait values} $J_0$ centered in 1 such that, for all $J\subset J_0$ open interval centered in 1, the density $f$ at large time $t\ge 0$ can be approximated by}:\small
\[
f(t,x,\theta) = \left\{ \begin{array}{l}
  \!\exp \left[ - \ {1\over 4\lambda ^2} \ \left[\theta - \lambda^{4/5}(6x^2)^{1/5}\right]^2    +\underset {t\to \infty}{\mathcal{O}}({|J|^2})\right] , \\ \\ \quad\qquad\qquad\qquad\qquad\qquad\qquad\qquad\quad \text{ for } x\leq y_c\ t^{5/4},\; \theta \in {\lambda^{4/5}(6x^2)^{1/5}}\cdot J, \\ \\
  {\color{black}\!\exp\left[\left(1- \left(\frac{x}{y_c\,t^{5/4}}\right)^{4/3}\right)t\right]}  \exp\left[ - {1\over 4\lambda^2}{\left[\theta - \left({3\lambda^2 x^2\over 2t}\right)^{1/3} \right]^2}\!\!\!+ \!\underset {t\to \infty}{\mathcal{O}}\!\left({|J|^2\frac{x^{8/3}}{t^{10/3}}}\right)\right] , \\
  \\
  \quad\qquad\qquad\qquad\qquad\qquad\qquad\qquad\quad \text{ for } x\geq y_c\ t^{5/4},\; \theta \in {\left({3\lambda^2 x^2\over 2t}\right)^{1/3}}\cdot J.
\end{array}\right.
\]\normalsize

{\color{black}For $x\ge y_c\,t^{5/4}$, we call the coefficient $\exp\left[\left(1- \left(\frac{x}{y_c\,t^{5/4}}\right)^{1/3}\right)t\right]$ the prefactor of the trait distribution, which is of the form $\exp\left[-c\left(\frac{x}{y_c\,t^{5/4}}\right)\,t\right]$, where the function $c$ is positive and increasing on $]1,+\infty[$.}
\end{axiom}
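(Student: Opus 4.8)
\emph{Strategy.} The plan is to carry out a formal geometric-optics (WKB) analysis of \eqref{chap_cemracs_sex:eq:PDE_apres_simplification_et_variance_generale} adapted to the accelerated scales $x\sim t^{5/4}$, $\theta\sim t^{1/2}$, exploiting the structural fact that the infinitesimal operator $B$ leaves invariant the Gaussians of variance $2\lambda^2$: if $f(t,x,\cdot)=\varrho(t,x)\,\mathcal G_{\sqrt2\lambda}(\cdot-\mu(t,x))$ then $B[f]=f$, so that the reaction collapses to the logistic term $(1-\varrho)f$ up to corrections measuring the departure of the local trait law from that Gaussian. I would therefore postulate a WKB Ansatz whose trait profile is, near its peak, $f(t,x,\theta)\approx\varrho(t,x)\,\mathcal G_{\sqrt2\lambda}(\theta-\mu(t,x))$ with $\varrho$ and $\mu$ slowly varying, insert it into \eqref{chap_cemracs_sex:eq:PDE_apres_simplification_et_variance_generale}, divide by $f$, and expand both sides in powers of $q:=\theta-\mu$ about $q=0$ (equivalently, close the $\theta$-moment hierarchy by the Gaussian Ansatz). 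The $q^0$ balance produces a Hamilton--Jacobi-type equation for $N:=\log\varrho$ coupled to $\mu$; the $q^1$ balance a transport equation for $\mu$; the $q^2$ balance pins the local variance to $2\lambda^2$ up to a lower-order correction slaved to $(N,\mu)$; and the residual $q^3$ term, which cannot be absorbed by a pure Gaussian, is precisely what forces the stated $\mathcal O(|J|^2)$ error and confines the validity of the profile to $\theta\in\mu(t,x)\cdot J$.

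\emph{The limit problem and its two regimes.} The reduced system behaves very differently at the invasion edge and in the invaded bulk. In the bulk the local population saturates at the rescaled carrying capacity, so $\varrho\equiv1$, $N\equiv0$, the transport equation degenerates, and, since at the front the spatial diffusion $\mu\,\partial_{xx}f$ spreads only over the length $t^{3/4}\ll t^{5/4}$, the trait profile there is essentially the \emph{imprint} left behind by the front and is obtained by matching rather than by solving a bulk equation. At the edge $\varrho$ is exponentially small, the factor $e^N$ disappears, and $N,\mu$ solve a closed first-order system which I would integrate by a self-similar Ansatz $\mu(t,x)=b\,x^{2/3}t^{-1/3}$, $N(t,x)=t-\kappa\,x^{4/3}t^{-2/3}$. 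Matching the dominant coefficients in the $q^0$ and $q^1$ balances then pins $\kappa=y_c^{-4/3}$, hence $y_c=4(\lambda/3)^{1/2}$ exactly as in \eqref{chap_cemracs_sex:form:yc}, together with $b=(3\lambda^2/2)^{1/3}$ (the $q^2$ balance only fixing the subleading variance correction). This yields $\mu_+(t,x)=(3\lambda^2x^2/(2t))^{1/3}$ and $N_+(t,x)=\bigl(1-(x/(y_c t^{5/4}))^{4/3}\bigr)t$, and identifies the front as the free boundary $\{N_+=0\}=\{x=y_c t^{5/4}\}$.

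\emph{Matching and the prefactor.} Freezing the edge value of $\mu$ at the time $t_\ast(x)=(x/y_c)^{4/5}$ at which the front reaches $x$ gives the bulk profile $\mu_-(x)=\mu_+(t_\ast(x),x)=\lambda^{4/5}(6x^2)^{1/5}$, and one checks that the two pieces glue continuously across $x=y_c t^{5/4}$, where $\mu_-=\mu_+=2\lambda\sqrt t$ and $N_-=N_+=0$. Substituting $(N_\pm,\mu_\pm)$ back into $f\approx e^{N}\mathcal G_{\sqrt2\lambda}(\theta-\mu)$, restricting $\theta$ to $\mu\cdot J$ and collecting the residual terms into the error, reproduces the two-case formula stated above, with prefactor $e^{N_+}$ at the edge. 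The final assertion is then immediate: for $s:=x/(y_c t^{5/4})\ge1$ the prefactor equals $e^{N_+}=e^{-c(s)t}$ with $c(s)=s^{4/3}-1$, and since $c(1)=0$ and $c'(s)=\tfrac43 s^{1/3}>0$, $c$ is positive and strictly increasing on $]1,+\infty[$.

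\emph{Main obstacle.} The delicate step is the passage to the limit in the nonlocal, quadratic operator $B$: one must argue that on the relevant scales the local trait law is driven to, and then stays within an $O(\lambda)$-wide neighbourhood of, the Gaussian of variance $2\lambda^2$ (using the relaxation / spectral-gap structure of the infinitesimal operator linearised about a Gaussian, whose eigenmodes are Hermite functions with geometrically decaying eigenvalues), and that the slow modulation in $(t,x)$ caused by the $\theta$-dependent diffusion feeds back only through the $q^0,q^1,q^2$ terms above. Closing this fast-trait/slow-space-time system, together with the solvability conditions that select the precise constant $y_c$, is the crux; once the limit problem is available its explicit resolution is elementary algebra.
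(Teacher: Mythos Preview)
Your plan is broadly sound and lands on the same explicit objects, but the route differs from the paper's in three places worth noting. The paper works in the Hopf--Cole self-similar frame $f=\exp[-t\,u(\log t,\,t^{-5/4}x,\,t^{-1/2}\theta)]$ with the expansion $u=u_0+\epsilon^2 u_1+o(\epsilon^2)$, $\epsilon=t^{-1/2}$; the quadratic shape $u_0=b(y)+(\eta-a(y))^2/(4\lambda^2)$ is there \emph{forced} by a Laplace-method/variational condition on the reproduction integral (following \cite{Boui17}), whereas you take it from the $B$-invariance of Gaussians of variance $2\lambda^2$---equivalent in content, but the paper's derivation says why no other leading profile is admissible. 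The paper then extracts a closed ODE system for $(a,b)$ by evaluating the limiting PDE and its $\eta$-derivative at $\eta=a(y)$, which is exactly your $q^0,q^1$ balance rewritten in self-similar variables, so the edge computation agrees. In the bulk, however, the paper does \emph{not} freeze: with $b\equiv0$ the system reduces to the genuine ODE $-\tfrac54\,y\,a'+\tfrac12\,a=0$, whose solution $a\propto y^{2/5}$ and continuity at $y_c$ give $\mu_-(x)=\lambda^{4/5}(6x^2)^{1/5}$; your matching $\mu_-(x)=\mu_+(t_\ast(x),x)$ produces the same formula, but only because that ODE is precisely the statement that $\mu$ is $t$-stationary---you should recover this from your $q^1$ balance rather than assert it on diffusion-length grounds. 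Finally, the paper's error control is sharper than a ``$q^3$ residual'': the corrector $u_1$ solves the functional equation $u_1(\eta)+u_1(a)-2u_1\bigl(\tfrac{\eta+a}{2}\bigr)=\log g(y,\eta)$ for an explicit cubic $g$, and the series solution $\sum_k 2^k T_y\bigl(a+(\eta-a)2^{-k}\bigr)$ is bounded via $T_y''$ on $a(y)\cdot J$, yielding the precise $\mathcal O(|J|^2)$ behind and $\mathcal O(|J|^2 y^{8/3})$ ahead. Your Hermite/spectral-gap picture is a reasonable alternative for the relaxation to the Gaussian manifold, but as stated it does not deliver the explicit window $J_0$ nor the $y$-dependence of the error ahead of the front.
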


\noindent The justification of this conjecture is postponed to \cref{chap_cemracs_sex:sec:proof}.

\cref{chap_cemracs_sex:thm:main} yields that at each time $t\geq 0$ large enough, the {\color{black} propagating front} is at the position:
\begin{equation}
  X(t) \approx y_c \, t^{5/4} = 4\left( { \lambda \over 3} \right)^{1/2} \ t^{5/4}.
  \label{chap_cemracs_sex:pos_front}
\end{equation}

Additionally, at large time $t$ and all space position $x\in\R$, {\color{black}the dispersive trait is normally distributed, with }variance $2\lambda^2$. Behind the front, \textit{i.e.}, at all position $x\ll X(t)$, the {\color{black}mean of the dispersive trait} $\overline{\theta}$ can be approximated by the value:

\begin{equation}
\label{meanthetabehind}
\overline{\theta}(x)\approx \lambda^{4/5}(6x^2)^{1/5},
\end{equation}
while ahead of the front, \textit{i.e.}, at all position $x\gg X(t)$, it can be approximated by:
\begin{equation}
\label{meanthetaahead}
\overline{\theta}(t,x)\approx \left({3\lambda^2 x^2\over 2t}\right)^{1/3}.
\end{equation}

{\color{black}Moreover, the prefactor of the distribution trait, $\exp\left[-c\left(\frac{y}{y_c}\right)\,t\right]$, with $c>0$ increasing on $]1,+\infty[$ and $y= {t^{-5/4}}x$, indicates that, ahead of the front, the population size pre\-su\-ma\-bly decreases with regard to the rescaled space variable $y$ at a given time $t>0$.}

\section{Simulations and validation}
\label{chap_cemracs_sex:sec:numeric}

In this section, we display numerical simulations, in order to validate the approximation of the solution of the Eq. \eqref{chap_cemracs_sex:eq:PDE_apres_simplification_et_variance_generale} provided by \cref{chap_cemracs_sex:thm:main}. The initial distribution used for simulation is assumed to be a truncated Gaussian distribution:
\begin{equation}
f(0, x, \theta) = {\sqrt{2\over \pi}}\, \exp\left[ - \, {x^2+(1 - \theta)^2 \over 2} \right]\, \mathds 1_{\theta \geq 1},
\label{chap_cemracs_sex:eq:CI}
\end{equation}
with $\mathds 1_{\theta \geq 1}$ the characteristic function of $\{\theta\geq 1\}$. The segregational variance $\lambda^2$ is taken equal to $1/2$. The discretization of the sexual reproduction term $B[f]$ represents the biggest challenge for the simulations, in comparison to the asexual case (see \cite{proceeding}). 

\subsection{Scheme}

We consider $x_{\max}\geq 0$ and $\theta_{\max}\geq 1$ so that we work with {\color{black}tuples} $(x,\theta)$ in the bounded domain $[0,x_{\max}]\times[1,\theta_{\max}]$, discretized with the meshes $(x_i)_{1\leq i \leq N_x}$ and $(\theta_j)_{1\leq j \leq N_\theta}$, respectively of {\color{black}step length} $\delta x>0$ and $\delta \theta>0$. As for the time discretization, let $\delta t>0$ be a time {\color{black}step length}, and let us define for all $n\in\mathbb{N}$, $t_n\,:=\,n\,\delta t$. We denote by $A^N_x$ the matrix of the discrete Laplace operator in $x$ of size $N_x$ with Neumann boundary condition at $x=0$ and Dirichlet boundary condition at $x=x_{\max}$:
\[
A^N_x = \dfrac{1}{\delta x^2}\,\left( \begin{matrix}
 - 1 & 1& & & (0)\\
1 & - 2 & 1 \\
& \ddots & \ddots & \ddots \\
& & 1 & - 2 & 1 \\
(0)&&& 1& - 2
\end{matrix}
\right)\in \mathcal M_{N_x}(\R),
\]
and the diagonal matrix:
\[
D_{\theta} = \left( \begin{matrix} 
\theta_1 & & (0)\\
& \ddots & \\
(0) & & \theta_{N_\theta}
\end{matrix} \right)\in \mathcal M _{N_\theta}(\R).
\]
Futhermore, we introduce a 3D hypermatrix $G_\theta\in M_{N_\theta,N_\theta,N_\theta}(\R)$ such that:
\[
\forall i,j,k, \, G_\theta(i,j,k) = \mathcal G_\lambda \left[ \theta_k - {\theta_i+\theta_j\over 2}\right],
\]
representing the discretization of the segregation kernel ($\mathcal G_\lambda$ given by \eqref{chap_cemracs_sex:def:gauss}).

For all $n\in\mathbb{N}$, we approximate $\left(f(t_n,x_i,\theta_j)\right)_{1\leq i \leq N_x,1 \leq j \leq N_\theta}$ {\color{black}by} a matrix: 
\[
F^n\,=\,\left(F^n_{ij}\right)_{1\leq i \leq N_x,1 \leq j \leq N_\theta} \in \mathcal M_{N_x,N_\theta}(\R),
\]
and the population size $(\varrho(t_n,x_i))_{1\le i \le N_x}$ {\color{black}by} the vector:
\[
\widetilde \varrho^n_i := \sum_{k=1}^{N_\theta} F^n_{i,k}\, \delta \theta \approx \varrho(t_n,x_i),
\]
using the following scheme. At each time iteration $n$,
\begin{enumerate}
  \item For {\color{black}every} index $1\le k\le N_\theta$, we compute the vector $V^{n}_{k,l}$ defined by:
  \[
  \forall  l,\, V^{n}_{k,l}:= \delta\theta^2\, \left[F^n\,G_\theta(\cdot,\cdot, k)\, (F^n)^T\right]_{ll}.
  \]
We can check that $V^{n}_{k,l}$ is the discretization of the reproduction integral term:
  \begin{align*}
  V^{n}_{k,l} & = \delta\theta^2\, \sum_{i,j=1}^{N_\theta} F^n_{l,i} G_\theta(i,j,k)F^n_{l,j},\\
  & \approx \delta\theta^2\, \sum_{i,j=1}^{N_\theta} f(t_n,x_l,\theta_i) \mathcal G_\lambda\left[\theta_k - {\theta_i+\theta_j\over 2}\right]f(t_n,x_l,\theta_j),\\
  & \approx \iint_{(1,\infty)^2} f(t_n,x_l,\theta_1) \mathcal G_\lambda\left[\theta_k - {\theta_1+\theta_2\over 2}\right]f(t_n,x_l,\theta_2)\,\md\theta_1\md\theta_2.
  \end{align*}
  Now to compute the reproduction matrix $\text{Mat}_{\text{Reprod}}\in \mathcal M_{N_x,N_\theta}(\R)$, we need to divide the previous quantities by the corresponding $\widetilde \varrho_i^n$. To be consistent, we set:
  \[
  \forall i, k,\, \text{Mat}^n_{\text{Reprod}}(i,k)=\left\{
  \begin{array}{ll}
    V^{n}_{k,i}/\widetilde \varrho^n_i, & \text{ if } \widetilde \varrho^n_i>0, \\
    0, & \text{else.}
  \end{array}\right.
  \]
  \item We define the diagonal matrix $D_\varrho^n:=\text{diag}\left((\widetilde \varrho^n_i)_{1\le i \le N_x}\right)\in \mathcal M_{N_x}(\R)$.
  \item We approximate in time using an explicit Euler scheme that is for all $n\in \mathbb N$:
  \begin{equation}
    F^{n+1}:= F^n +\delta t\left[ A_x^N\times F^n\times D_\theta + r\, \left( \text{Mat}^n_{\text{Reprod}} - K^{ - 1} \times D_\varrho^n\times F^n\right)\right].
    \label{chap_cemracs_sex:scheme_eq}
  \end{equation}
  In this section, the parameters $r$ and $K$ are equal to 1. The general scheme~\ref{chap_cemracs_sex:scheme_eq} is used in supplementary materials, to show the effects of differents parameters on the invasion.
\end{enumerate}

To be sure that this scheme gives a good approximation of the solution of the PDE~\eqref{chap_cemracs_sex:eq:PDE_apres_simplification_et_variance_generale}, the spatial step $\delta x$ is taken large enough.

\subsection{Numerical results} We show our results of simulations of the solution of the Eq.~\eqref{chap_cemracs_sex:eq:PDE_apres_simplification_et_variance_generale} in two figures Fig.~\ref{chap_cemracs_sex:fig1} and Fig.~\ref{chap_cemracs_sex:fig2}. In the first one, we display different features of the front, whereas in the second one, we compare the numerical trait distribution behind the front with the approximation formally obtained in \cref{chap_cemracs_sex:thm:main}.

\begin{figure}
  \centering
%  \subfloat[]{\includegraphics[scale=0.33]{}} \\
  \subfloat[]{\includegraphics[scale=0.6]{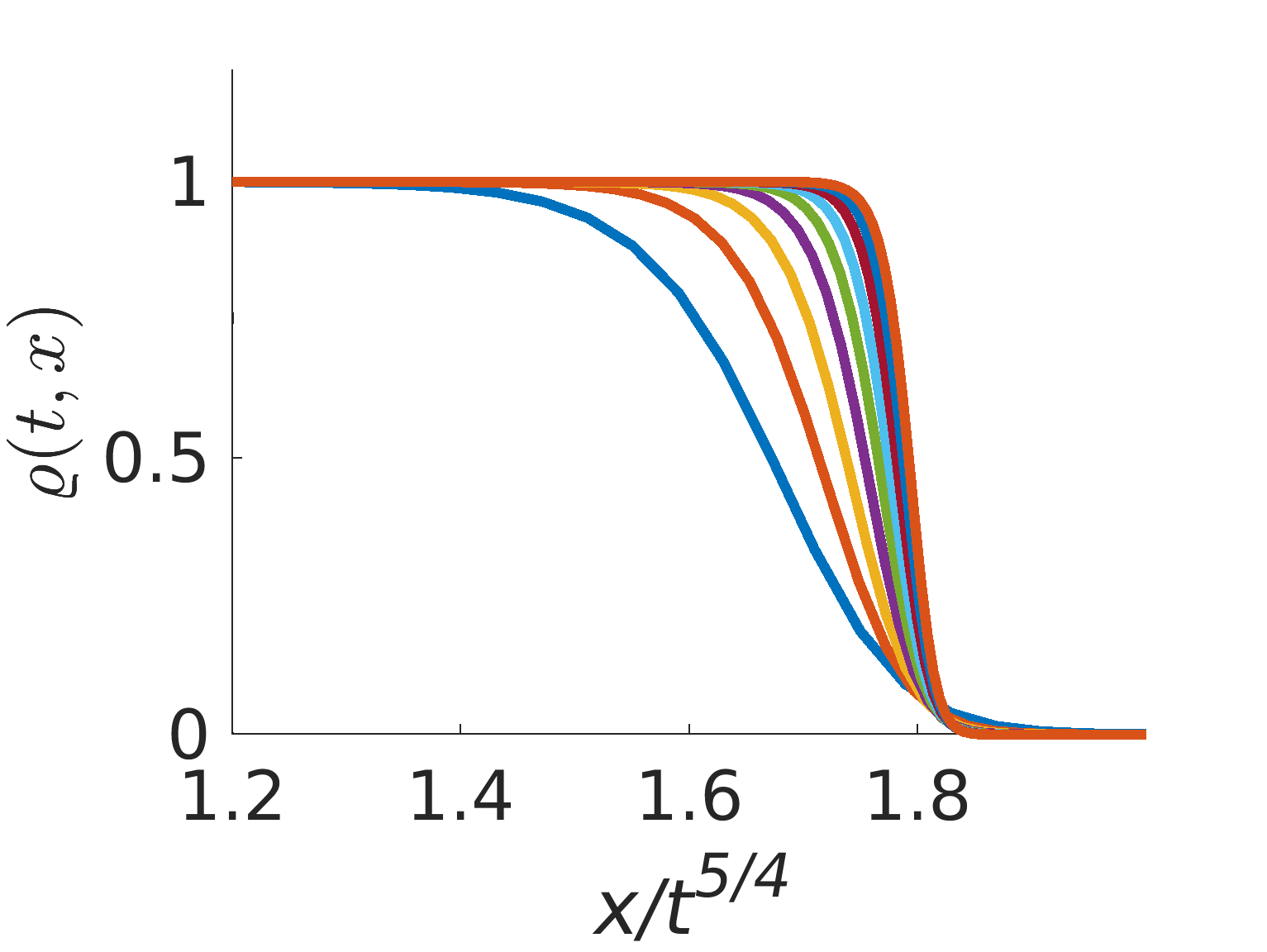}}
\\  \subfloat[]{\includegraphics[scale=0.6]{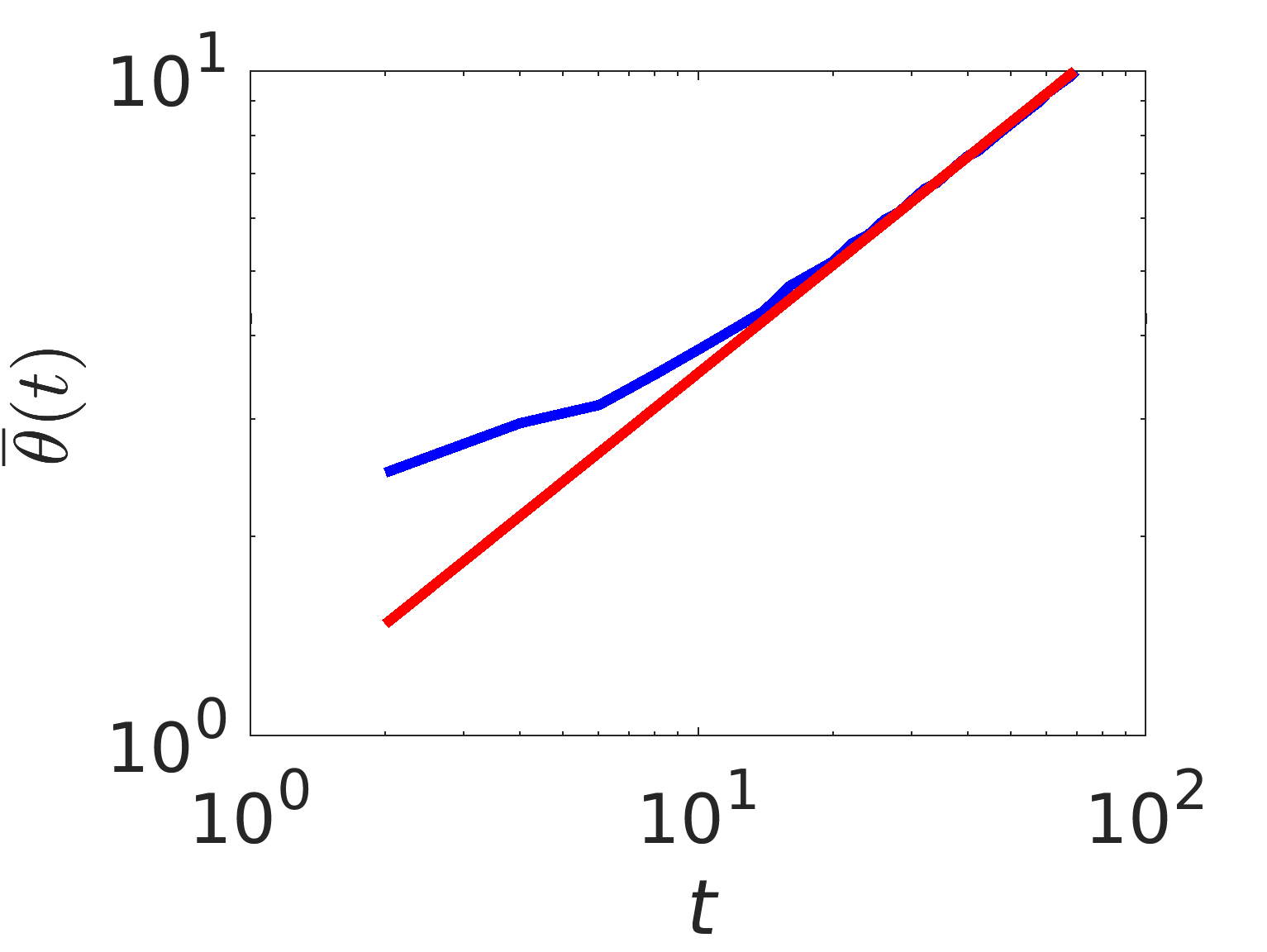}} \hspace{0.3cm}

  \caption[Simulations of the invasion of a sexual population]{\textbf{ Simulations of the invasion of a sexual population}, associated to the Eq.\eqref{chap_cemracs_sex:eq:PDE_apres_simplification_et_variance_generale} with parameters $\delta t = 0.02$, $\delta x = 4$, $\delta \theta =2 /3$, $x_{\max} = 3000$ and $\theta_{\max} = 201$. (a) Plot of the {\color{black}population size} $\varrho(t,\cdot)$ for successive fixed times at regular intervals from $t=20$ to $t=200$, with respect to the auto - similar variable $x t^{ - 5/4}$. (b) Plot of the {\color{black}mean of the dispersive trait} $\overline{\theta}^{num}(t)$ (see \eqref{chap_cemracs_sex:eq:theta(t)_num}) at the front position with respect to time (blue curve) and of the function $t \to {\color{black}1.02} t^{{\color{black}0.54}}$ (red curve), in $\log - \log$ scale.}
  \label{chap_cemracs_sex:fig1}
\end{figure}

    In the top subfigure Fig.~\ref{chap_cemracs_sex:fig1} (a), the population size $\varrho(t,x)$ is displayed at multiple time regularly spaced between $t=20$ and $t=200$ for different scaled position $x$. As expected, thanks to Fig.~\ref{chap_cemracs_sex:fig1} (a), we can see that this front accelerates: there exists a constant $y_c^{num}$ such that the front at time $t$ is at position:
    \begin{equation*}
      X^{num}(t) = y_c^{num} \, t^{5/4},
    \end{equation*}
    where the numerical front position $X^{num}(t)$ at time $t\ge 0$ is defined by:
    \begin{equation}
    X^{num}(t_n)=x_{i^{num}(t_n)}, \quad \hbox{with} \quad i^{num}(t_n)\,:=\,\underset{1\le i \le N_x }{\text{argmin}}\,\left| \widetilde\varrho^n_i - 0.01 \right|.
    \label{chap_cemracs_sex:eq:X(t)_num}
    \end{equation}
    {More precisely, thanks to a linear regression, the constant $y_c^{num}$ can be approximated by {\color{black}2.1}, {\color{black}and the exponent of $t$} by {\color{black}1.22} (with $R^2=1$ and $p$-value $<10^{-4}$).}
    These numerical results are consistent with 
    \eqref{chap_cemracs_sex:pos_front}, which numerically gives:
    \[
    X(t) = 4\left(1\over {2\times 9} \right) ^{1/4} t^{5/4} \approx 1.94\, t^{5/4}.
    \]
    With Fig.~\ref{chap_cemracs_sex:fig1} (b), we confirm that the {\color{black}mean of the dispersive trait} at the front that we get from the numerical simulations is {\color{black}quite} consistent with the approximation given by \cref{chap_cemracs_sex:thm:main}. Precisely, let us define the {\color{black}mean of the dispersive trait} $\bar\theta^{num} (t)$ at the front position $X^{num}(t)$, given by:
    \begin{equation}
    \bar{\theta}^{num}(t) \,:=\, \dfrac{\int_\R\,\theta\,f(t,X^{num}(t),\theta)\,\md\theta}{\varrho(t,X^{num}(t))} .
    \label{chap_cemracs_sex:eq:theta(t)_num}
    \end{equation}

\begin{figure}[h!]
  \centering
  \subfloat[]{\includegraphics[scale=0.6]{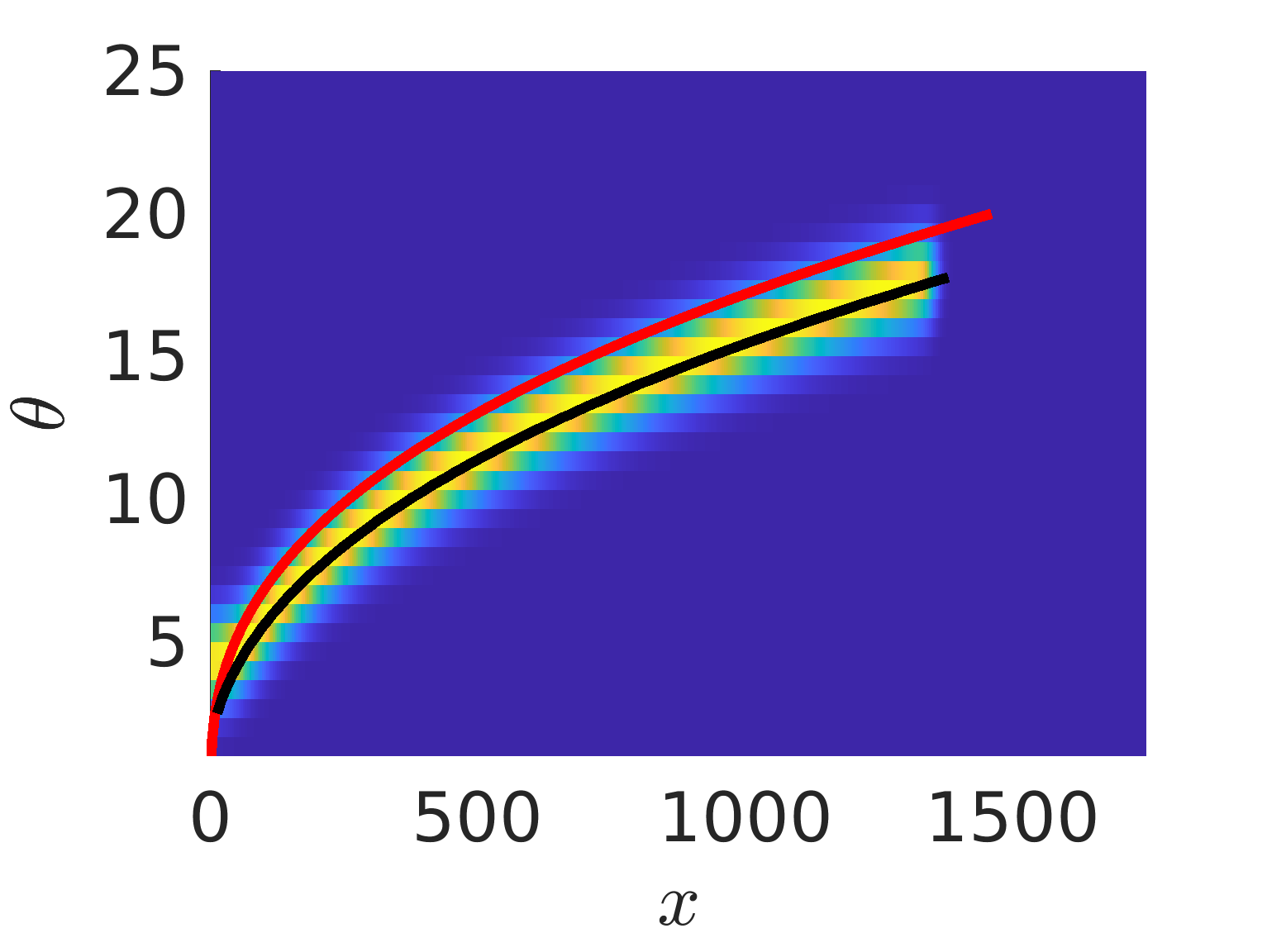}}
  \\
  \subfloat[]{\includegraphics[scale=0.6]{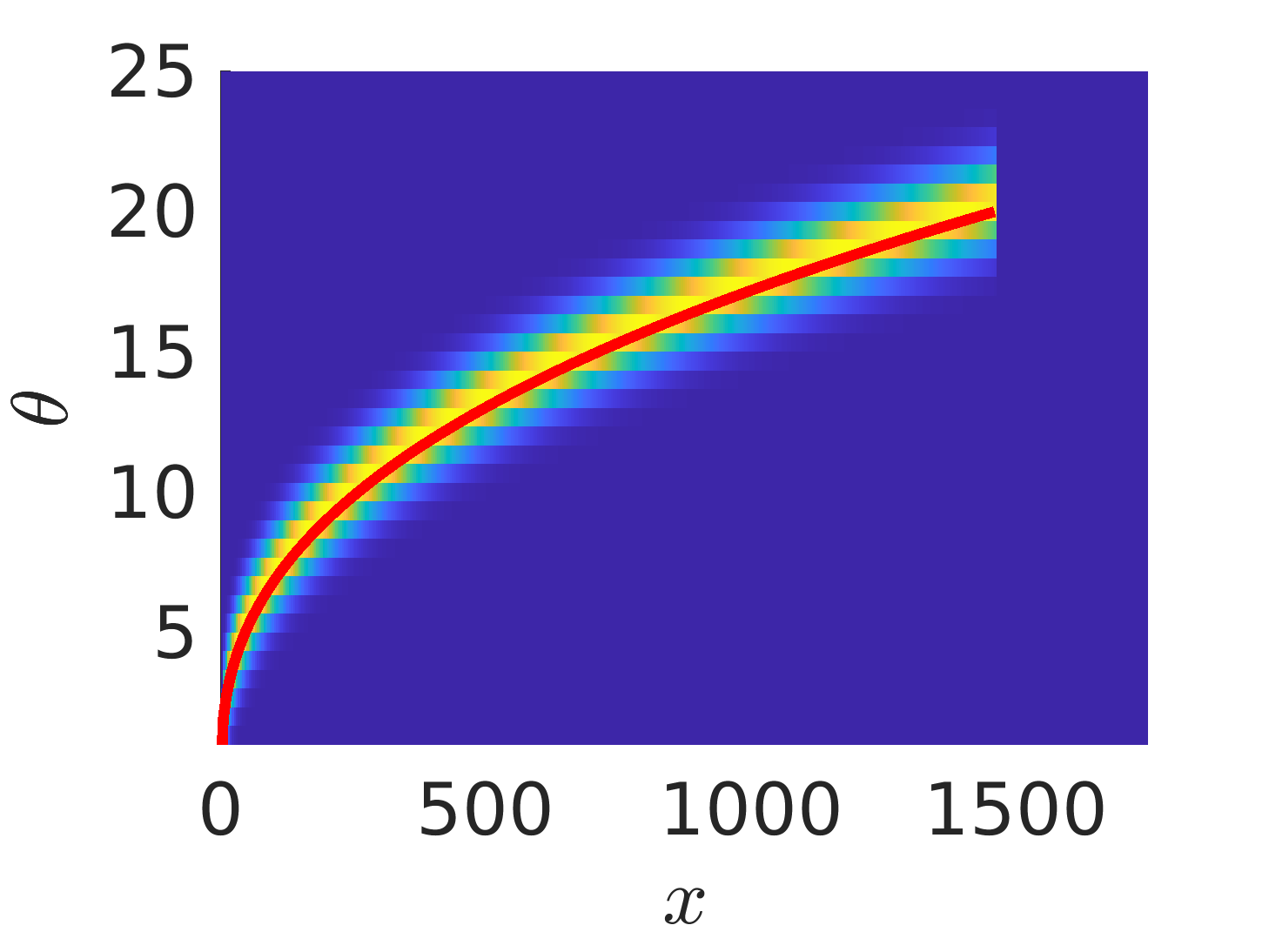}}
  \caption[Contour lines of the trait distribution of a sexual population]{\textbf{Contour lines of the trait distribution of a sexual population}, associated to the Eq. \eqref{chap_cemracs_sex:eq:PDE_apres_simplification_et_variance_generale} with parameters $\delta t = 0.02$, $\delta x = 4$, $\delta \theta =2 /3$, $x_{\max} = 3000$ and $\theta_{\max} = 201$. (a) Trait distribution given by the numerical simulations, at $t=200$. (b) Trait distribution behind the {\color{black}propagating front} given by \cref{chap_cemracs_sex:thm:main}, at $t=200$. The red line represents the approximation of the mean trait behind the {\color{black}propagating front} given by \eqref{meanthetabehind}, and is common to both subfigures, while the dark line is the mean trait behind the {\color{black}propagating front} given by the simulations.
  }
  \label{chap_cemracs_sex:fig2}
\end{figure}

    Using a linear regression on the values for $t\in [60,200]$ (illustrated in Fig.~\ref{chap_cemracs_sex:fig1} (b)), the {\color{black}mean of the dispersive trait} $\bar\theta^{num}$ can be approximated by:
    \[
    \bar\theta^{num}(t) \approx {\color{black}1.02}\, t^{{\color{black}0.54}}, \quad (R^2=1, \;p\text{-value} < 10^{-14}).
    \]
    We can compare this relationship with the {\color{black}mean of the dispersive trait} $\bar\theta(t)$ at the front $X(t)$, given respectively by \eqref{meanthetabehind} and \eqref{chap_cemracs_sex:pos_front}:
    \[
      \bar\theta(t) = \lambda^{4/5}(6X(t)^2)^{1/5} = 2\lambda \sqrt {t} = \sqrt {2t}.
    \]
{\color{black} We notice a non trivial difference between $\bar\theta(t)$ and $\bar\theta^{num}$, mainly in their prefactors ($\sqrt 2$ and ${\color{black}1.02}$), but also in their exponents ($0.5$ and $0.54$) (see also the gap between the red and black lines in Fig.~\ref{chap_cemracs_sex:fig2}). This seems partly due to numerical inaccuracies resulting from having a bounded trait space (thus disregarding the largest traits) and from numerical scheme errors. One can also note that the asymptotic distribution indicated by \cref{chap_cemracs_sex:thm:main} might not yet be reached at time $200$ (upper time bound in our numerical simulations).
}

Let us turn to the description of the trait distribution behind the front. In Fig.~\ref{chap_cemracs_sex:fig2}, we display the contour lines of the trait distribution at time $t=200$: subfigure (a) is the trait distribution given by the simulations, while (b) is the formal trait distribution (behind the front only) given by \cref{chap_cemracs_sex:thm:main}. Our approximation appears to fit the numerical results. More precisely, the red curve, representing the {\color{black}mean of the dispersive trait} at each position behind the front given by \eqref{meanthetabehind}, yields a good approximation of the numerical {\color{black}mean of the dispersive trait}. Moreover, if we represent the numerical trait distribution behind the front at multiple times (see Fig.~\ref{chap_cemracs_sex:fig:film}), we can see that it seems to remain stationary, which is consistent with the fact that the expression of the approximation behind the front given by \cref{chap_cemracs_sex:thm:main} is independent of the time.

\begin{figure}
  \centering
  \subfloat[$t=50$]{\includegraphics[scale=0.4]{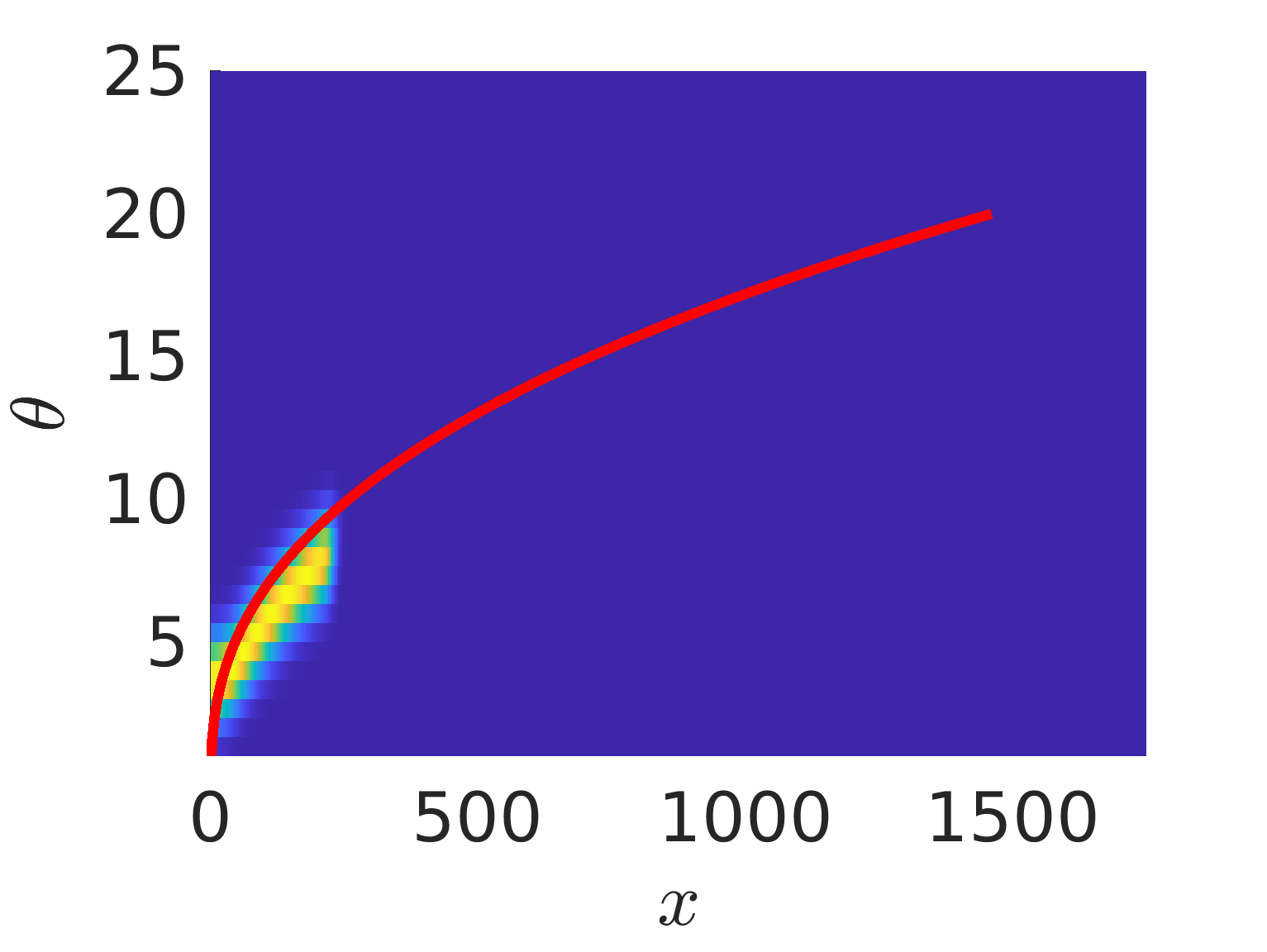}}\hspace{0.3cm}
  \subfloat[$t=100$]{\includegraphics[scale=0.4]{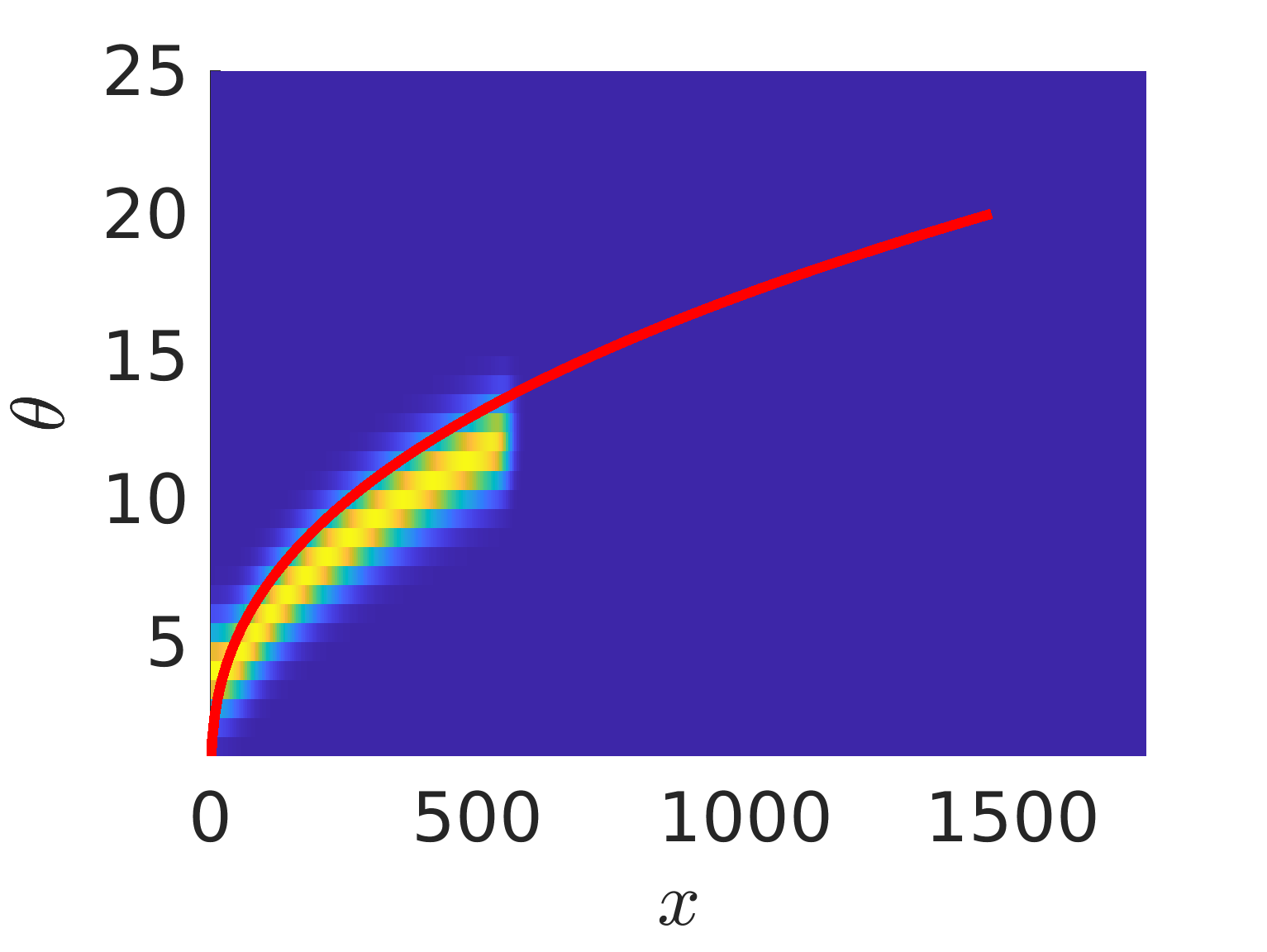}}\\
  \subfloat[$t=150$]{\includegraphics[scale=0.4]{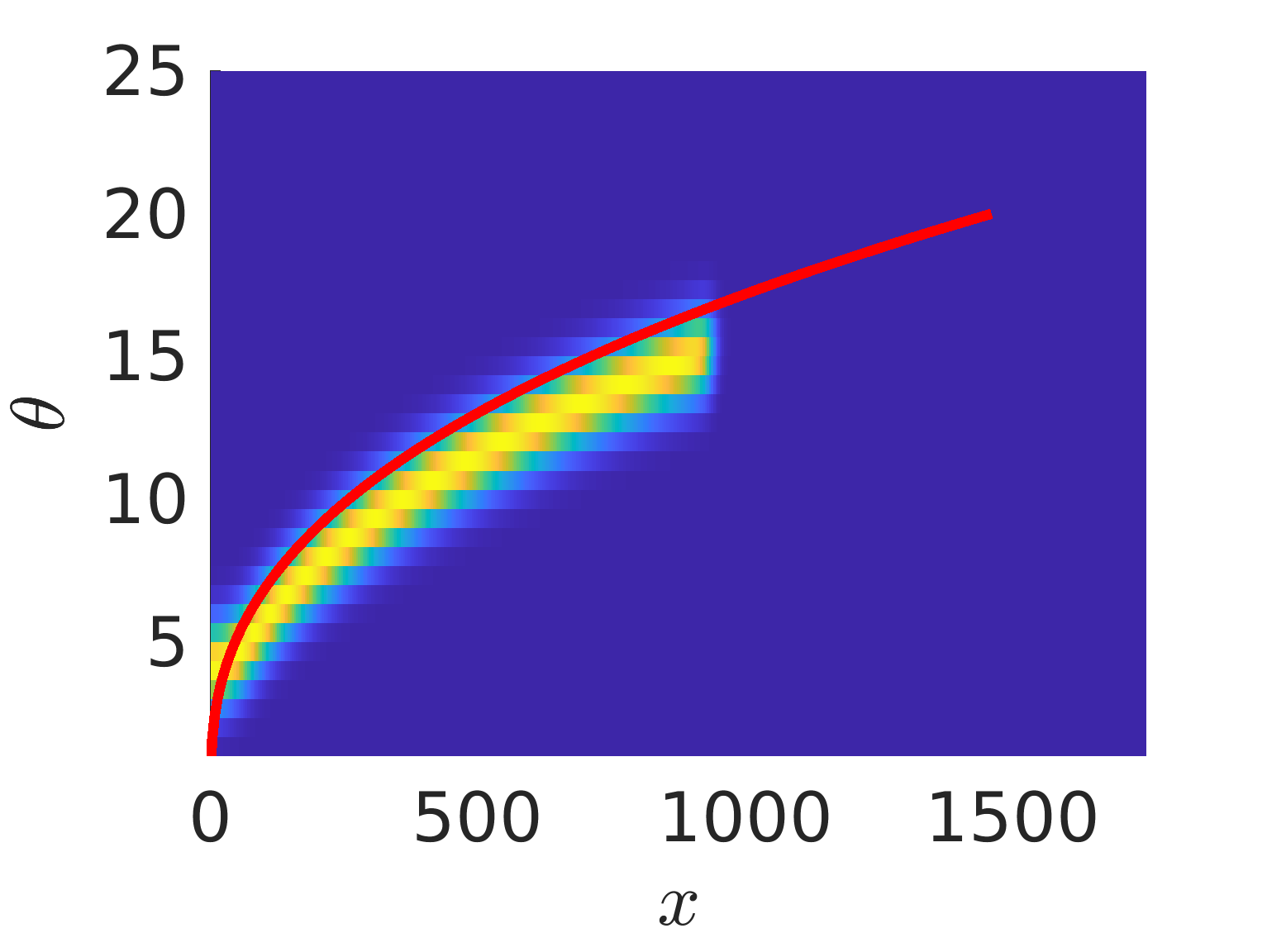}}\hspace{0.3cm}
  \subfloat[$t=200$]{\includegraphics[scale=0.4]{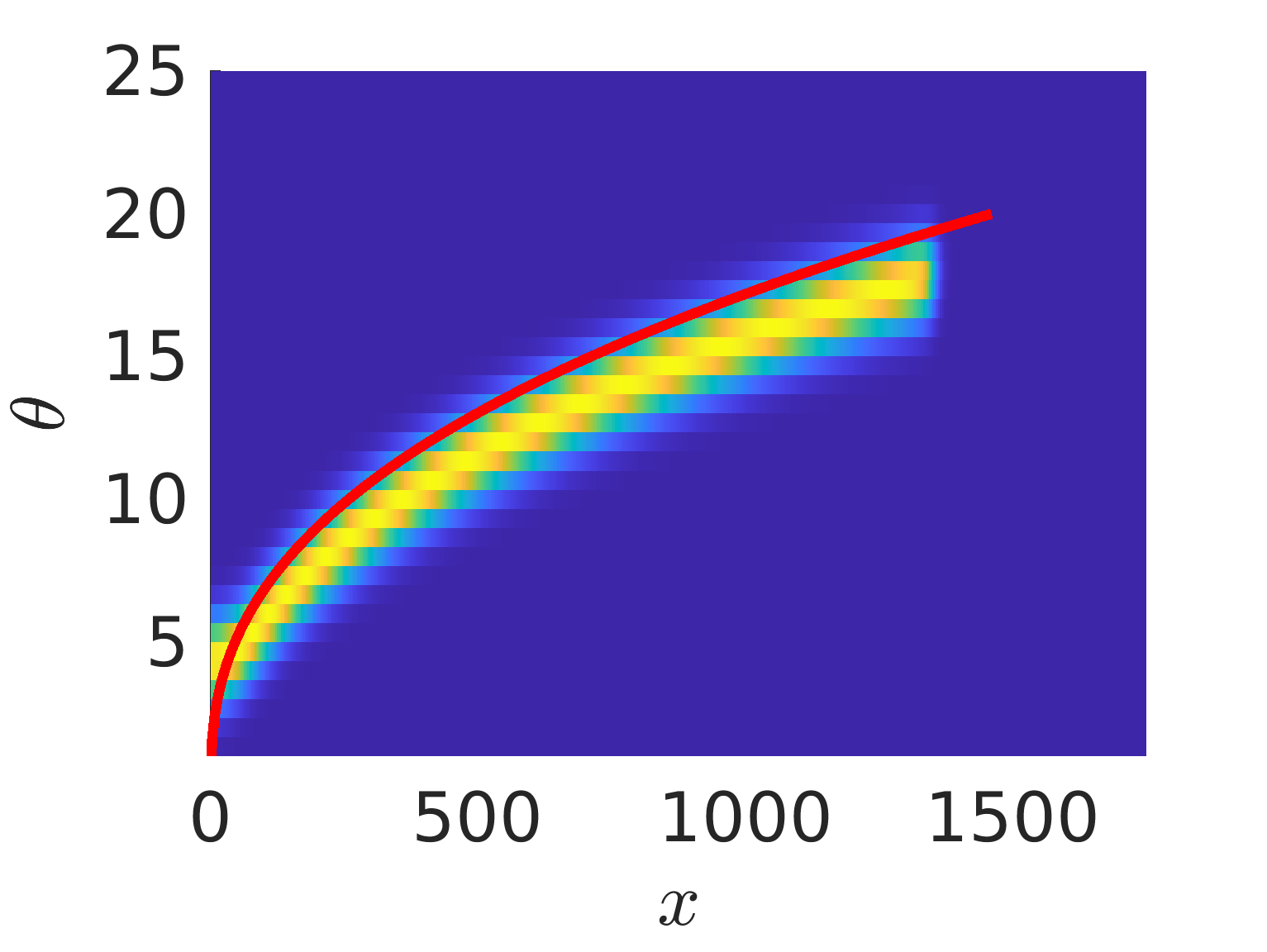}}
  \caption[Contour lines of the trait distribution during the invasion of a sexual population]{\textbf{Contour lines of the trait distribution during the invasion of a sexual population}, given by simulations, at (a) $t=50$ (b) $t=100$ (c) $t=150$ (d) $t=200$. The red line represents the approximation of the mean trait behind the {\color{black}propagating front} given by \eqref{meanthetabehind}, at time $t=200$. The parameters are $\delta t = 0.02$, $\delta x = 4$, $\delta \theta =2 /3$, $x_{\max} = 3000$ and $\theta_{\max} = 201$.}
  \label{chap_cemracs_sex:fig:film}
\end{figure}

\begin{figure}
  \centering
  {\includegraphics[scale=0.3]{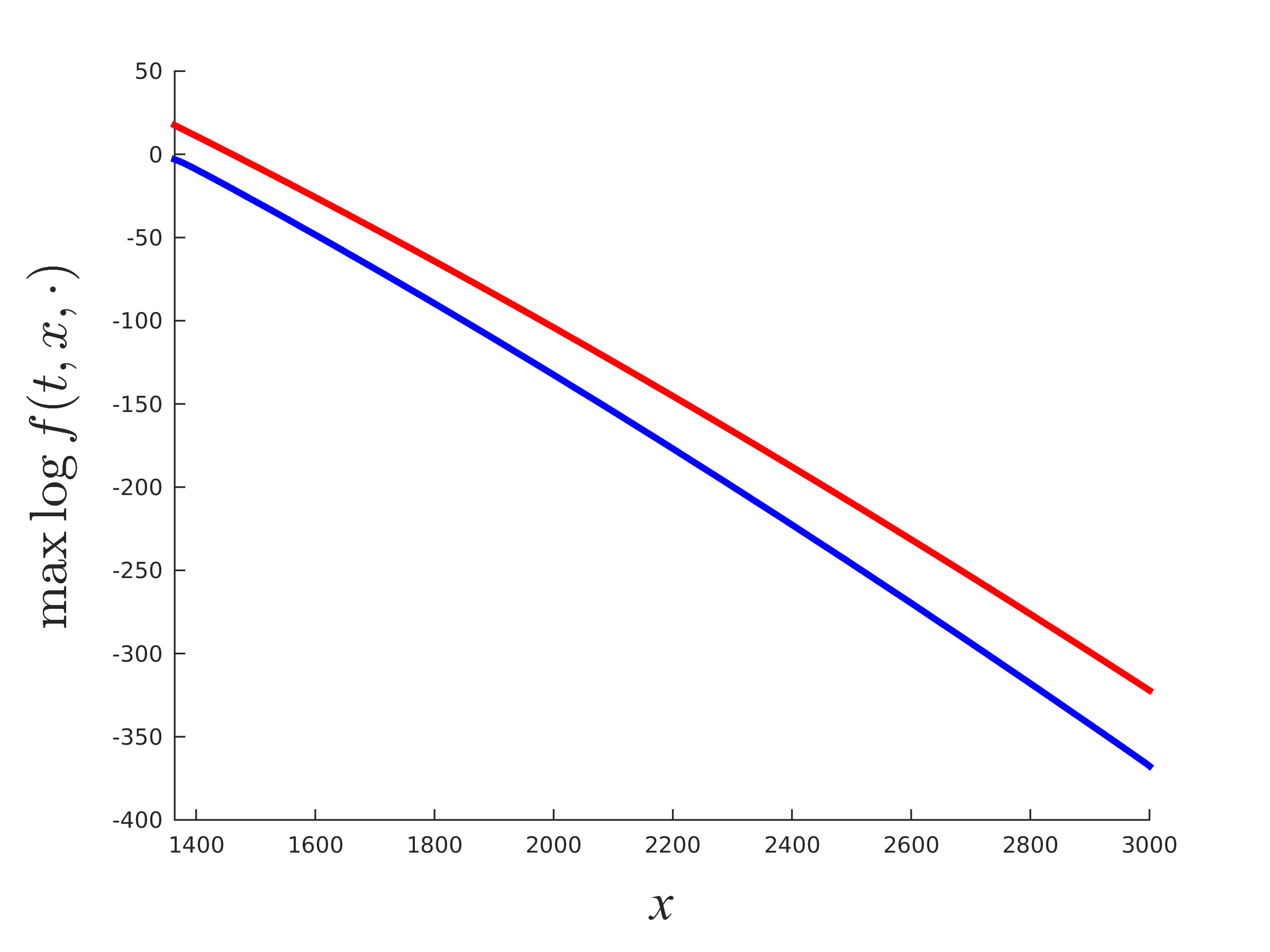}}
  \caption{\textbf{Plot of logarithm of the amplitude of the distribution ahead of the front at time $t=200$.} The blue curve represents the log of the maximum of the distribution $f(t,x,\cdot)$ of the numerical approximation given by the scheme, for different positions $x$ located beyond the numerical value of the front position $(X^{\text{num}}(200) \approx 1400)$. The red curve represents {the prefactor of the trait distribution ahead of the front given by \cref{chap_cemracs_sex:thm:main}}. The parameters are $\delta t = 0.02$, $\delta x = 4$, $\delta \theta =2 /3$, $x_{\max} = 3000$ and $\theta_{\max} = 201$.}
  \label{chap_cemracs_sex:fig:amplitude}
\end{figure}

{
Fig.~\ref{chap_cemracs_sex:fig:amplitude} shows the evolution of the amplitude of the trait distribution $f(t,x,\cdot)$ ahead of the front, in blue curve (log scale). We can see that it can be approximated by the red curve, {which displays the prefactor of the trait distribution ahead of the front given by \cref{chap_cemracs_sex:thm:main}, and that this approximation holds even at very low density}. The difference is due to the other terms of higher power, which are neglected. 
}

\section{Formal proof of the results}
\label{chap_cemracs_sex:sec:proof}

This section is devoted to the formal proof of \cref{chap_cemracs_sex:thm:main}. In \cref{chap_cemracs_sex:proof1}, we set the self-similar variables framework suitable to capture the asymptotic invasion acceleration process. Then in \cref{chap_cemracs_sex:proof2}, we formally derive an asymptotic equation that will allow us in \cref{chap_cemracs_sex:proof3} to determine the position of the front and to derive an approximation of the trait distribution $f(t,x,\theta)$  by finding a solution to the limit problem.

\subsection{Preliminaries}
\label{chap_cemracs_sex:proof1}
According to the same methodology used in previous stu\-dies that model the evolution of dispersion (see for instance \cite{Bou17,proceeding,calvez2018nonlocal}), we define the function $u$ such that:
\begin{equation}\label{chap_cemracs_sex:eq:defu}
f(t, x, \theta) = \exp\left[ - t \, u\left(s(t), t^{ - 5/4}x, t^{ - 1/2}\theta\right)\right],
\end{equation}
where $s(t) = \log(t)$ is a time parametrization (chosen so that $ts^\prime (t)=1$). {\color{black}According to the formal arguments of \cite{proceeding}, we also scale the spatial variable ($y= t^{ - 5/4}x$) and trait variable ($\eta = t^{ - 1/2} \theta$), {\color{black}which leads to the spatial invasion rate accelerating proportionally to $t^{5/4}$} (see \cite{proceeding} for details).} Like in the latter, we recall that the power exponents are chosen so that the all biological forces (particularly, migration and reproduction) contribute in a balanced way in the following PDE on $u$, satisfied for all $t\ge 0$, for all $y\in \R$ and for all $\eta \ge e^{ - s/2}$:
\begin{multline}\label{chap_cemracs_sex:eq:usex}
 - u(s,y,\eta) - \partial _s u(s,y,\eta)+\frac{5}{4} y\partial _y u(s,y,\eta)+\frac{\eta}{2} \partial_\eta u (s,y,\eta)
  \\
  = \eta \left[(\partial_y u(s,y,\eta))^2 - e^{ - s}\Delta_y u(s,y,\eta)\right]\\
  +\left( I[u](s,y,\eta) - \varrho_u(s,y) \right),
\end{multline}
where:
\begin{equation}
  \varrho_u(s,y) = e^{s/2} \int_{e^{ - s/2}}^\infty \exp\left[ - e^{s}u(s, y, \eta)\right]\md\eta,
\end{equation}
and:\footnotesize
\begin{multline}
  I[u](s,y,\eta)= \frac{e^{s}}{\sqrt{2\pi\lambda^2}\varrho_u(s,y)}\\
  \iint_{(e^{ -s/2},\infty)^2} \exp\left[e^{s}\left( - \frac{\left(\eta - \frac{\eta_1+\eta_2}{2}\right)^2}{2\lambda^2} +\left[u(s,y,\eta) - u(s,y,\eta_1) - u(s,y,\eta_2)\right]\right)\right] \, \md\eta_1 \, \md\eta_2.
\end{multline}\normalsize

%Henceforth, we note for the sake of clarity: $\alpha = 5/4$, $\beta = 1/2$ and  {\color{blue}$o_{\epsilon\rightarrow 0}(\epsilon^p)$ any function $r_\epsilon$ such that $\epsilon^{ - p}\ r_\epsilon$ converges uniformly relatively to $y$ and $\eta$ towards $0$, as $\epsilon\to 0$.}

Henceforth, we note for the sake of clarity: $\alpha=5/4$ and $\beta=1/2$. {\color{black}We generalise also the notation $o_{\epsilon\rightarrow 0}(\epsilon^p)$ for a sequence of functions $r_{\epsilon}(y,\eta)$ by :
\vspace{0.3cm}
\begin{center}
    $r_\epsilon(y,\eta)=o_{\epsilon\rightarrow 0}(\epsilon^p)$ if $\sup_{(y,\eta)}|\epsilon^{-p} r_\epsilon(y,\eta)|$ goes to 0, as $\epsilon$ vanishes.
\end{center}
\vspace{0.3cm}
}

Our formal aim is to determine the large time behaviour of the solution of \eqref{chap_cemracs_sex:eq:usex}, as $s\to \infty$ (which is equivalent to take $t\to \infty$).

\subsection{Formal asymptotic equation}

\label{chap_cemracs_sex:proof2}

In this subsection, we will derive from \eqref{chap_cemracs_sex:eq:usex} an asymptotic equation in the limit $s \rightarrow \infty$ that will explicit the interplay between spatial sorting and trait distribution at the front of the solution.
The main idea is to perform a Taylor expansion of $u$. For that purpose, let us define the variation $\epsilon = e^{ - s/2}$. In the line of \cite{Calvez_Garnier_Patout_2019}, we make the following ansatz:
\begin{equation}
u(s, y, \eta) = u_0(y,\eta)+\epsilon^2 u_1(y,\eta) + o_{\epsilon\rightarrow 0}(\epsilon^2).
\label{chap_cemracs_sex:taylor}
\end{equation}

In the next paragraph, we justify the following separation of trait and space variable in $u_0$, where:
\begin{equation}
  u_0(y,\eta) = b(y) +{(\eta - a(y))^2\over 4\lambda^2}.
  \label{chap_cemracs_sex:quadratic}
\end{equation}
where $a$ and $b$ are continuous and piecewise differentiable functions of the space variable. Let us interpret them.

Using the ansatz \eqref{chap_cemracs_sex:taylor} and \eqref{chap_cemracs_sex:quadratic} in \eqref{chap_cemracs_sex:eq:defu} yields (we recall that $\epsilon = e^{-s/2}$):

\begin{equation}
    f(s,y,\eta) = \exp\left[-\, \frac{b(y)}{\epsilon^2}\right]\exp\left[-\,\frac{(\eta-a(y))^2}{4\lambda^2\epsilon^2}\right]\exp\left[-u_1(y,\eta) +  \underset {\epsilon\rightarrow 0} o (\epsilon^2)\right].
    \label{ansatz}
\end{equation}

Hence, when $s\rightarrow \infty$, the leading term of the trait distribution $\eta\mapsto f(s,y,\cdot)$ is Gaussian, and the correction is brought by a term determined by $u_1$. The space dependent functions $a$ and $b$ crystallize the main effect of spatial sorting on the trait distribution:
\begin{itemize}
    \item[$\diamond$] $a(y)$ gives the mean rescaled dispersal trait $\eta>0$ at position $y$. It is therefore positive and satisfies the relation: \[
    u_0(y,a(y)))=\min\{u_0(y,\eta),\text{ with } \eta\in (0,\infty)\};
    \]
{    \item[$\diamond$] $b(y)$ determines the prefactor of this distribution: formally, we will see that if $b(y)>0$, $\varrho_u(s,\cdot)$ vanishes when $s$ tends to $\infty$. On the contrary, the set $\{b(y)=0\}$ is associated to that area where $\varrho_u$ is asymptotically non-zero. In the context of a spatial invasion, it corresponds to the spatial area that has already been invaded. Hence, we are searching $b$ such that there exists a constant $y_c$ such that $\{b(y)=0\} = \{y\leq y_c\}$. We can interpret $y_c$ as the rescaled position of the front.}
\end{itemize}
Finally, the space dependent functions $a$ and $b$ are linked to the corrector term $u_1$ by an asymptotic equation that we deduce from \eqref{chap_cemracs_sex:eq:usex} (see below for the details). For $y$ where $a$ and $b$ are differentiable: 

{\footnotesize
\begin{multline}
-b(y)-{(\eta-a(y))^2\over 4\lambda^2} +\alpha y\left[b^\prime (y)-a^\prime(y) \, {\eta-a(y)\over 2\lambda^2}\right]+\beta \eta {\eta-a(y)\over 2\lambda^2} -\eta \left[b^\prime (y)-a^\prime(y) \, {\eta-a(y)\over 2\lambda^2}\right]^2
\\
  = \exp\left[ u_1(y, \eta)+u_1(y,a(y))-2u_1\left(y, {\eta+a(y)\over 2}\right)\right] - \mathbf{1}_{\{y\leq y_c\}}.
  \label{chap_cemracs_sex:eq:fusionaetb}
\end{multline}}
In the next section, we find an explicit solution to \eqref{chap_cemracs_sex:eq:fusionaetb}, which encodes the intertwined relationship between spatial sorting and trait distribution.

\textbf{Explanation for the decomposition of $u_0$ \eqref{chap_cemracs_sex:quadratic}.} We will recall the fundamental steps, more extensively detailed formally in \cite{Boui17} and rigorously in \cite{Calvez_Garnier_Patout_2019} (for a model without any spatial structure). From the Taylor expansion of $u$ given in $\eqref{chap_cemracs_sex:taylor}$, we get the following expression for $I[u]$:
\footnotesize
\begin{align*}
I[u](s,\eta,y) = \frac{1}{\epsilon \sqrt{2\pi \lambda^2}}\iint_{(\epsilon,\infty)^2} \frac{\exp\left[\frac{1}{\epsilon^2}A^0_{y,\eta}(\eta_1,\eta_2)\right]\exp\left[A^1_{y,\eta}(\eta_1,\eta_2)\right]\exp\left[\underset{\epsilon\rightarrow 0}o(1)\right] \md\eta_1 \md\eta_2}{\int_\epsilon^{\infty} \exp\left[ - \frac{u_0(y,\eta')}{\epsilon^2} - u_1(y,\eta')\right]\md\eta'},
\end{align*}
\normalsize
where:
\[
\left\{ \begin{array}{l}
  A^0_{y,\eta}(\eta_1,\eta_2) = - {1\over 2\lambda^2} \left[\eta - \frac{\eta_1+\eta_2}{2}\right]^2 +u_0(y,\eta) - u_0(y,\eta_1) - u_0(y,\eta_2), 
  \\
   \\
  A^1_{y,\eta}(\eta_1,\eta_2) = u_1(y,\eta) - u_1(y,\eta_1) - u_1(y,\eta_2).
\end{array} \right.
\]

Then, we have several considerations to make. First, if we assume that $u_0$ reaches its minimum at a non degenerated-point, then the following modified  expression of the denominator:
\[\int_\epsilon^{\infty} \exp\left[-\frac{1}{\epsilon^2}\left[u_0(y,\eta') - \min u_0(y, . )\right]-u_1(y,\eta')\right]\md\eta',\]
will concentrate, as $\epsilon$ goes to 0, around the minimum of $u_0(y,\cdot)$ and have a finite limit. Therefore it is relevant to introduce it both at the numerator and the denominator:
\footnotesize
\[
\frac{1}{\left[\epsilon \sqrt{2\pi \lambda^2}\right]^2}\frac{\iint_{(\epsilon,\infty)^2} \exp\left[\frac{1}{\epsilon^2}\big(A^0_{y,\eta}(\eta_1,\eta_2)+\min u_0(y, . )\big)\right]\exp\left[A^1_{y,\eta}(\eta_1,\eta_2) +\underset{\epsilon\rightarrow 0}o(1)\right] d\eta_1 d\eta_2}{\frac{1}{\epsilon \sqrt{2\pi \lambda^2}}\int_\epsilon^{\infty} \exp\left[-\frac{1}{\epsilon^2}\left[u_0(y,\eta') - \min u_0(y, . )\right]-u_1(y,\eta')\right]d\eta'}.
\]
\normalsize

As we want consequently the numerator not to diverge as $\epsilon \rightarrow 0$, we need that:
\footnotesize
\begin{equation}
\forall \eta \in \R, \underset{(\eta_1,\eta_2)}{\max}\left[- {1\over 2\lambda^2}\left(\eta -\frac{\eta_1+\eta_2}{2}\right)^2+u_0(y,\eta) - u_0(y,\eta_1) - u_0(y,\eta_2) +\min u_0(y, . ) \right] = 0\label{chap_cemracs_sex:min}.
\end{equation}
\normalsize

As shown in \cite{Boui17}, thanks to some convexity arguments, this leads necessarily to choose $u_0(y,\cdot)$ as a quadratic function in $\eta$ with variance $\lambda^2$, hence \eqref{chap_cemracs_sex:quadratic}.

\textbf{Deriving the asymptotic Eq. \eqref{chap_cemracs_sex:eq:fusionaetb} verified by $u_1(\eta,y), a(y)$ and $b(y)$.} To get an asymptotic equation from \eqref{chap_cemracs_sex:eq:usex}, we still need to establish (formally) the limit of $I[u](s,y,\eta)$ as $s=-2\log(\epsilon)$ goes to $\infty$, by incorporating the quadratic expression \eqref{chap_cemracs_sex:quadratic} of $u_0$ in $I[u]$. We will separate the cases of the numerator and the denominator for the sake of clarity.

According to Laplace's method, as we expect the denominator to concentrate around the minimum of $u_0$, namely at $a(y)$, one can perform the change of variable $z:= \frac{\eta'-a(y)}{\epsilon}$:
\footnotesize
\begin{multline*}
\frac{1}{\epsilon \sqrt{2\pi \lambda^2}}\int_\epsilon^{\infty} \exp\left[-\frac{1}{\epsilon^2} \left[u_0(y,\eta') - \min u_0(y, .)\right] - u_1(y,\eta')\right] d\eta' \\
 = \frac{1}{\sqrt{2\pi \lambda^2}}\int_{1-a(y)/\epsilon}^{\infty} \exp\left[-\frac{z^2}{4\lambda^2}\right]\exp\left[-u_1\left[y,a(y)+\epsilon z\right]\right] dz\underset{\epsilon \rightarrow 0}{\rightarrow}\sqrt{2} \exp\left[-u_1\left[y,a(y)\right]\right].
\end{multline*}
\normalsize

Similarly, following the analysis of the authors of \cite{Boui17} and \cite{Calvez_Garnier_Patout_2019} on \eqref{chap_cemracs_sex:min}, we get that the numerator concentrates around the point $(\overline{\eta},\overline{\eta})$, with $\overline \eta = {\eta + a(y)\over 2}>0$, realizing its minimum. One can thus perform the change of variables $(\eta_1,\eta_2)=(\overline{\eta} + \epsilon z_1,\overline{\eta}+\epsilon z_2)$, so that a straightforward computation following the quadratic expression \eqref{chap_cemracs_sex:quadratic} of $u_0$ leads to: 
\footnotesize
\begin{multline}
-\frac{1}{\epsilon^2}\left[-{1\over 2\lambda^2} \left[\eta -\frac{\eta_1+\eta_2}{2}\right]^2+u_0(y,\eta) - u_0(y,\eta_1) - u_0(y,\eta_2) +\min u_0(y, . ) \right] \\
= \frac{1}{4\lambda^2}z_1 z_2 + \frac{3}{8\lambda^2}(z_1^2+z_2^2),
\end{multline}
\normalsize
and therefore:
\footnotesize
\begin{multline*}
 \frac{1}{\left[\epsilon \sqrt{2\pi \lambda^2}\right]^2} \iint_{(\epsilon,\infty)^2} \exp\left[\frac{1}{\epsilon^2}\big(A^0_{y,\eta}(\eta_1,\eta_2) +\min u_0(y, . )\big) \right]\exp\left[A^1_{y,\eta}(\eta_1,\eta_2)+o_{\epsilon\rightarrow 0}(1)\right] d\eta_1 d\eta_2, \\
=\iint_{(1-\overline \eta /\epsilon,\infty)^2} \frac{\exp\left[-\frac{z_1z_2}{4\lambda^2} - \frac{3}{8\lambda^2}(z_1^2+z_2^2)\right]}{[\sqrt{2\pi \lambda^2}]^2}\exp\left[u_1(y,\eta) - u_1(y,\overline{\eta}+\epsilon z_1) - u_1(y,\overline{\eta}+\epsilon z_2)\right] dz_1 dz_2,\\
\underset{\epsilon \rightarrow 0}{\rightarrow} \sqrt{2} \exp\left[u_1(y,\eta) - 2u_1(y,\overline{\eta})\right].
\end{multline*}
\normalsize
We can thereby obtain the formal limit of $I[u]$:
\[
I[u]\left(s,y,\eta\right) \underset{s\rightarrow\infty}{\rightarrow} \exp\left[ u_1(y, \eta)+u_1(y,a(y))-2u_1\left(y, {\eta+a(y)\over 2}\right)\right]. 
\]

Moreover, we need the formal limit of $\varrho_u(s,y)$ as $s=-2\log(\epsilon)$ tends to $\infty$:

\begin{align*}
\varrho _u & \left(-\ 2\log(\epsilon),y\right) = \frac{1}{\epsilon} \int_\epsilon^{\infty} \exp\left[-\frac{u(-2\log(\varepsilon,y,\eta)}{\epsilon^2}\right]\md \eta ,\\
&   = \exp\left[-\frac{b(y)}{\epsilon^2}\right]\frac{1}{\epsilon} \int_\epsilon^{\infty} \exp\left[-\frac{(\eta-a(y))^2}{4\lambda^2\epsilon^2}\right]\exp\left[-u_1(y,\eta)+\underset{\varepsilon\to 0}o(1)\right]\md\eta,\\
&= \exp\left[-\frac{b(y)}{\epsilon^2}\right]\int_{1-\frac{a(y)}{\varepsilon}}^{\infty} \exp\left[-\frac{z^2}{4\lambda^2}\right]\exp\left[-u_1(y,a(y)+\epsilon z)+\underset{\varepsilon\to 0}o(1)\right]\md z.\end{align*}
Hence, formally, we get:
\[\varrho _u\left(-\ 2\log(\epsilon),y\right)\underset{\epsilon \rightarrow 0}{\longrightarrow} \mathbf{1}_{\{b(y)=0\}}2\sqrt{\pi}\lambda \exp\left[-u_1(y,a(y))\right].\]

By integrating all these formal computations in \eqref{chap_cemracs_sex:eq:usex}, we formally obtain an asymptotic equation satisfied by $a$, $b$ and $u_1$, where $a$ and $b$ are differentiable:
{\footnotesize\begin{multline*}
-b(y)-{(\eta-a(y))^2\over 4\lambda^2} +\alpha y\left[b^\prime (y)-a^\prime(y) \, {\eta-a(y)\over 2\lambda^2}\right]+\beta \eta {\eta-a(y)\over 2\lambda^2} -\eta \left[b^\prime (y)-a^\prime(y) \, {\eta-a(y)\over 2\lambda^2}\right]^2\\
  = \exp\left[ u_1(y, \eta)+u_1(y,a(y))-2u_1\left(y, {\eta+a(y)\over 2}\right)\right]\\ -\mathbf{1}_{\{b(y)=0\}}2\sqrt{\pi}\lambda \exp\left[-u_1(y,a(y))\right].
\end{multline*}}

As we are describing a front propagation, we are looking for $a$ and $b$ continuous on $\R$ and differentiable everywhere but not necessarily at the front position (to be determined): \[y_c = \sup\{y,\;b(y) = 0\}.\]
For such functions $a$ and $b$, we have by evaluating the latter at $\eta=a(y)$ for $y<y_c$: \[2\sqrt{\pi}\lambda\exp\left[-u_1(y,a(y))\right] = 1.\] Hence, for $y\neq y_c$  and $\eta \in J_y$ (subset of $\R_+^*$ to be determined), we consider the asymptotic Eq. \eqref{chap_cemracs_sex:eq:fusionaetb}:
{\footnotesize\begin{multline*}
-b(y)-{(\eta-a(y))^2\over 4\lambda^2} +\alpha y\left[b^\prime (y)-a^\prime(y) \, {\eta-a(y)\over 2\lambda^2}\right]+\beta \eta {\eta-a(y)\over 2\lambda^2} -\eta \left[b^\prime (y)-a^\prime(y) \, {\eta-a(y)\over 2\lambda^2}\right]^2\\
  = \exp\left[ u_1(y, \eta)+u_1(y,a(y))-2u_1\left(y, {\eta+a(y)\over 2}\right)\right] -\mathbf{1}_{\{y<y_c\}}.
  \label{pde:y_diff_yc}
\end{multline*}}

\subsection{Resolution of the asymptotic Eq. \eqref{chap_cemracs_sex:eq:fusionaetb}} 

\label{chap_cemracs_sex:proof3}

Let us define for $y\neq y_c$, $\eta>0$:

\begin{multline*}
    g(y,\eta) := -b(y)-{(\eta-a(y))^2\over 4\lambda^2} +\alpha y\left[b^\prime (y)-a^\prime(y) \, {\eta-a(y)\over 2\lambda^2}\right]\\+\beta \eta {\eta-a(y)\over 2\lambda^2} -\eta \left[b^\prime (y)-a^\prime(y) \, {\eta-a(y)\over 2\lambda^2}\right]^2 + \mathbf{1}_{\{y<y_c\}}.
\end{multline*}

Let us fix $y\neq y_c$. For $\eta>0$ such that $g(y,\eta) >0$, we can reformulate \eqref{chap_cemracs_sex:eq:fusionaetb} as:
\begin{equation}
\label{eq:pbu1}T_y(\eta) = L_y(u_1)(\eta),
\end{equation}
where: \begin{equation*}
    T_y(\eta) = \log\left[g(y,\eta) \right],
\end{equation*}
and:
\[L_y(u_1) :\eta \mapsto  u_1(y, \eta)+u_1(y,a(y))-2u_1\left(y, {\eta+a(y)\over 2}\right). \]

Eq.~\eqref{eq:pbu1} suggests that $a$, $b$ and $y_c$ are to be chosen so that $T_y$ lies in the image of the linear operator $L_y$. One can notice that the kernel of $L_y$ is composed of the linear functions, hence: \[\dim \ker\left(L_y\right) = 2.
\]

Heuristically, the image of $L_y$ is orthogonal to a two dimensional space, which is generated by $\delta_{a(y)}$ and $\delta_{a(y)}'$. More precisely, following \cite{Calvez_Garnier_Patout_2019}, one can show that if $T_y$ verifies:
\begin{equation}
  \begin{aligned}
\begin{cases}
T_y\left(a(y)\right)=0,\\
T_y'\left(a(y)\right) = 0,
\end{cases}
\end{aligned}
\label{eq:T_y}
\end{equation}
then the following sum converges:
\begin{equation}
    u_y:\eta \mapsto\sum_{k=0}^\infty 2^k T_y\left[a(y)+\left(\eta-a(y)\right)\,2^{-k}\right],
    \label{eq:u_y}
\end{equation}
and $L_y(u_y) = T_y.$

Hence, we first need to solve \eqref{eq:T_y}, that is to find $y_c>0$, $(a,b)\in C^0(\R)\cap C^1(\R\backslash\{y_c\})$, such that:

\small\begin{equation}
\forall y \neq y_c, \quad\left\{
\begin{array}{l}
   -b(y)+\alpha y b^\prime(y)-a(y) (b^\prime(y))^2+\mathbf{1}_{\{y<y_c\}}=1,
  \\
-\alpha y a^\prime(y)+\beta a(y) - 2\lambda^2(b^\prime (y))^2 + 2 a(y) b^\prime(y) a^\prime (y)=0.
\end{array} \right. 
\label{chap_cemracs_sex:sys:aetbetyc}
\end{equation}

Here, we present an explicit solution to \eqref{chap_cemracs_sex:sys:aetbetyc}: 
\begin{proposition} 
\label{prop:abyc}
Let us define:
\[y_c = 4\sqrt{\frac{\lambda}{3}},\quad \begin{array}{cc}
     &  \\
     & 
\end{array}a:y\mapsto\left\{
\begin{array}{ll}
   {\lambda^{4/5}\,6^{1/5}} \;y^{2/5}, \quad&\text{if}\; y\leq y_c,
  \\
\left(\frac{3\lambda^2}{2}\right)^{1/3}\,y^{2/3},\quad&\text{if}\;y> y_c,
\end{array} \right.\]
and:
\[  b:y\mapsto\left\{
\begin{array}{ll}
   0, \quad&\text{if}\;y\leq y_c,
  \\
\left(\frac{3}{\lambda2^4}\right)^{2/3}y^{4/3} -1,\quad&\text{if}\;y> y_c.
\end{array} \right.\]
Then $a,b \in C^0(\R)\cap C^1(\R\backslash\{y_c\})$ and $y_c$, $a$ and $b$ are solutions of \eqref{chap_cemracs_sex:sys:aetbetyc}.
\end{proposition}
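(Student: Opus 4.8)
The statement is a verification, so the plan is to substitute the explicit $y_c$, $a$ and $b$ into \eqref{chap_cemracs_sex:sys:aetbetyc} and check the two equations separately on each regime $y<y_c$ and $y>y_c$, and then check the matching at $y=y_c$. Regularity is the easiest point: on each of the two half-lines $a$ and $b$ are, up to the additive constant $-1$ in $b$, monomials in $y$ (namely $y^{2/5}$, $y^{2/3}$, $y^{4/3}$), hence $C^\infty$ there, so $a,b\in C^1(\R\setminus\{y_c\})$ is immediate and only continuity at $y_c$ remains. Since $b\equiv 0$ to the left of $y_c$, continuity of $b$ at $y_c$ reads $(3/(16\lambda))^{2/3}\,y_c^{4/3}=1$, i.e. $y_c=(16\lambda/3)^{1/2}=4\sqrt{\lambda/3}$ — which is precisely the prescribed value; continuity of $a$ at $y_c$ then reduces to the identity $\lambda^{4/5}6^{1/5}\,y_c^{2/5}=(3\lambda^2/2)^{1/3}\,y_c^{2/3}$, which I would verify by inserting $y_c$ and matching the exponents of $\lambda$, $2$ and $3$ on both sides.

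For $y<y_c$ one has $b\equiv 0$, hence $b'\equiv 0$ and $\mathbf 1_{\{y<y_c\}}=1$, so the first equation of \eqref{chap_cemracs_sex:sys:aetbetyc} collapses to the trivial identity $1=1$, while the second becomes the linear ODE $\alpha y\,a'(y)=\beta a(y)$. Since $\beta/\alpha=2/5$, every $a(y)=C\,y^{2/5}$ solves it; in particular the chosen $C=\lambda^{4/5}6^{1/5}$ does, which settles this regime. Note that $C$ is not fixed by the ODE on this branch — it is pinned down by the continuity requirement at $y_c$ handled above.

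For $y>y_c$, write $a(y)=A\,y^{2/3}$ and $b(y)=B\,y^{4/3}-1$ with $A:=(3\lambda^2/2)^{1/3}$ and $B:=(3/(16\lambda))^{2/3}$, so $a'(y)=\tfrac23 A\,y^{-1/3}$, $b'(y)=\tfrac43 B\,y^{1/3}$, and $\mathbf 1_{\{y<y_c\}}=0$. Plugging into the first equation of \eqref{chap_cemracs_sex:sys:aetbetyc}, the constant $+1$ produced by $-b(y)$ cancels the right-hand side and the remaining terms are all proportional to $y^{4/3}$; collecting them, the equation holds iff $AB=3/8$. Plugging into the second equation, every term is proportional to $y^{2/3}$ and the equation holds iff $-3A-32\lambda^2B^2+16A^2B=0$. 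It then remains to check these two algebraic identities for the explicit $A,B$: the first, $AB=(3\lambda^2/2)^{1/3}(3/(16\lambda))^{2/3}=3\cdot 2^{-3}=3/8$, follows by adding exponents; for the second, using $AB=3/8$ one has $16A^2B=6A$, and using $\lambda^2=\tfrac23 A^3$ one has $32\lambda^2B^2=32\lambda^2\big(3/(8A)\big)^2=3A$, so the left-hand side equals $-3A-3A+6A=0$.

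The computation is elementary and the only real work is bookkeeping the fractional exponents, so I do not expect a genuine obstacle. The one point worth underlining is conceptual rather than technical: the reason such clean solutions exist is the homogeneity of \eqref{chap_cemracs_sex:sys:aetbetyc} (once the additive constant in $b$ is stripped off), which forces $a$ and $b$ to be power laws with fixed exponents; the power laws are therefore dictated by the structure rather than guessed, and the requirement that the two branches match continuously — together with the jump of the source term $\mathbf 1_{\{y<y_c\}}$ — then fixes both the prefactors and the front location $y_c$, exactly as the continuity check exhibits.
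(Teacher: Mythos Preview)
Your verification is correct and complete: the regularity, the continuity matching at $y_c$ for both $a$ and $b$, and the two algebraic identities $AB=3/8$ and $-3A-32\lambda^2B^2+16A^2B=0$ on the outer branch are all checked properly. The paper itself gives no proof of this proposition (it is stated as a direct verification, with the accompanying Remark indicating that these are the unique power-law solutions), so your explicit computation is exactly the intended argument.
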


\begin{remark}
The functions $a$, $b$ and $y_c$ given in the previous proposition are the only solutions of \eqref{chap_cemracs_sex:sys:aetbetyc} of the form : $a(y) = Cy^m$, $b(y)=Ky^n-1$ that are positive for $y>y_c$ and continuous in $y_c$.
\end{remark}

To derive a solution for \eqref{chap_cemracs_sex:eq:fusionaetb} from Proposition~\ref{prop:abyc}, one still has to define $T_y(\eta)$, which requires $g(y,\eta)>0$. As $g(y,\cdot)$ is a three order polynomial in $\eta$ with a negative leading coefficient, it is not positive as $\eta$ becomes large so we can not define $T_y$ on $\R_+^*$. However, $a,b$ and $y_c$ are solutions of \eqref{chap_cemracs_sex:sys:aetbetyc}, which is equivalent to:
\[g(y,a(y))=1,\quad \partial_\eta g(y,a(y)) = 0.\] We aim therefore at solving \eqref{chap_cemracs_sex:eq:fusionaetb} locally in $\eta$ around $a(y)$:

\begin{proposition}
Let $a,b$ and $y_c$ be as in Proposition~\ref{prop:abyc}. Then, there exists $J_0$ an interval centered in 1 such that, for all $y\neq y_c$, $\eta>0$ such that $\frac{\eta}{a(y)}\in J_0$, we have $g(y,\eta)>0$. Moreover, for $y\neq y_c$, $T_y = \log(g(y,\cdot))$ is well defined on $a(y)\cdot J_0$ {and for all $J\subset J_0$ open interval centered in 1}:
\begin{itemize}
    \item[$\diamond$] for $y<y_c$ and $\eta \in a(y)\cdot J$, the series defined in \eqref{eq:u_y} converges and is bounded uniformly with regard to $\eta$ and $y$ {and the bound is of the form $A\,|J|^2$}.
    \item[$\diamond$] for $y>y_c$ and $\eta \in a(y)\cdot J$, the series defined in \eqref{eq:u_y} converges and is bounded uniformly with regard to $\eta$, and {the bound is of the form: $B|J|^2y^{8/3}$}.
\end{itemize}
\end{proposition}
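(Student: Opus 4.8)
The plan is to reduce both assertions to two explicit one-variable facts about the cubic $\eta\mapsto g(y,\eta)$: its normal form at the point $a(y)$, and a geometric-series estimate built from it.

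\textbf{Step 1 (normal form of $g$ at $a(y)$).} First I would expand $g(y,\eta)$ in powers of $p:=\eta-a(y)$. By construction $g(y,\cdot)$ is a polynomial of degree at most $3$ in $p$, with leading coefficient $-a'(y)^2/(4\lambda^4)<0$. Since, as recalled just before Proposition~\ref{prop:abyc}, the system \eqref{chap_cemracs_sex:sys:aetbetyc} is equivalent to $g(y,a(y))=1$ and $\partial_\eta g(y,a(y))=0$, the constant and the linear term in $p$ are $1$ and $0$. A short direct computation, in which the fixed value $\beta=\tfrac12$ is used precisely to cancel the contribution $-\tfrac{1}{4\lambda^2}+\tfrac{\beta}{2\lambda^2}$ to the coefficient of $p^2$, then gives
\[
g\bigl(y,a(y)+p\bigr)=1+c_2(y)\,p^2+c_3(y)\,p^3,\qquad c_2(y)=\frac{a'(y)b'(y)}{\lambda^2}-\frac{a(y)a'(y)^2}{4\lambda^4},\quad c_3(y)=-\frac{a'(y)^2}{4\lambda^4}.
\]

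\textbf{Step 2 (rescaling, positivity, definition of $T_y$).} Setting $p=a(y)w$, so that $\eta\in a(y)\cdot J$ corresponds to $w\in J-1$, one gets $g(y,a(y)(1+w))=1+\gamma_2(y)w^2+\gamma_3(y)w^3$ with $\gamma_2=c_2 a^2$, $\gamma_3=c_3 a^3$. Inserting the power laws of Proposition~\ref{prop:abyc} and simplifying, every power of $y$ cancels in these rescaled coefficients: for $y<y_c$ one finds $\gamma_2(y)=\gamma_3(y)=-\tfrac{6}{25}$, hence $g(y,a(y)(1+w))=1-\tfrac{6}{25}w^2(1+w)$; and for $y>y_c$ one finds $\gamma_2(y)=-\gamma_3(y)=C\,y^{4/3}$ with $C=(96\lambda^2)^{-1/3}>0$, hence $g(y,a(y)(1+w))=1+C\,y^{4/3}w^2(1-w)$. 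Fixing any $\delta\in(0,1)$ and letting $J_0:=(1-\delta,1+\delta)$, for $w\in J_0-1$ one has $1+w>0$ and $1-w>0$, so $g\geq 1-\tfrac{6}{25}\sup_{|w|<1}w^2(1+w)>0$ when $y<y_c$ and $g\geq 1>0$ when $y>y_c$. This proves the first assertion and shows $T_y=\log g(y,\cdot)$ is well defined and smooth on $a(y)\cdot J_0$; moreover $g(y,a(y))=1$ and $\partial_\eta g(y,a(y))=0$ give $T_y(a(y))=T_y'(a(y))=0$, i.e.\ condition \eqref{eq:T_y}, so the series \eqref{eq:u_y} is indeed the relevant object to estimate.

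\textbf{Step 3 (summing the series).} For $\eta=a(y)(1+w)$ with $w\in J-1$, the $k$-th summand of \eqref{eq:u_y} is $2^k\log(1+x_k)$ with $x_k=\gamma_2(y)(w2^{-k})^2+\gamma_3(y)(w2^{-k})^3$. For $y<y_c$ one checks $|x_k|\leq\tfrac{12}{25}w^2 4^{-k}<\tfrac12$, so $|\log(1+x_k)|\leq 2|x_k|$ and $|2^k\log(1+x_k)|\leq\tfrac{24}{25}w^2 2^{-k}$; summing the geometric series yields $|u_y(\eta)|\leq\tfrac{48}{25}w^2\leq\tfrac{12}{25}|J|^2$, uniformly in $y$ and $\eta$. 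For $y>y_c$ the essential difference is that $x_k=C\,y^{4/3}(w2^{-k})^2(1-w2^{-k})\geq 0$ is non-negative but need not be small; I would therefore replace the local Taylor bound by the global inequality $0\leq\log(1+x)\leq x$, which gives $0\leq 2^k\log(1+x_k)\leq 2^k x_k\leq 2C\,y^{4/3}w^2 2^{-k}$, hence after summation $0\leq u_y(\eta)\leq 4C\,y^{4/3}w^2\leq C|J|^2 y^{4/3}\leq B|J|^2 y^{8/3}$, using $y>y_c>0$ to absorb $y^{4/3}$ into $y^{8/3}$. This establishes the two bullet points.

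\textbf{Expected main obstacle.} The heart of the argument is the cancellation in Step 2: that all powers of $y$ disappear from $\gamma_2(y)$ and $\gamma_3(y)$ is precisely what lets the positivity window $J_0$ be chosen independently of $y$, and it relies entirely on the specific exponents $\tfrac25,\tfrac23,\tfrac43$ of the solution from Proposition~\ref{prop:abyc}; a generic admissible pair $(a,b)$ would destroy it. The secondary delicate point is the $y>y_c$ case of Step 3, where the quantity fed into $\log(1+\cdot)$ does not vanish with $|J|$, so one must use the one-sided bound $\log(1+x)\leq x$ rather than $\log(1+x)\approx x$ --- it happens to be exactly sharp enough to reach the claimed $y^{8/3}$ growth.
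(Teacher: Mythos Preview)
Your proof is correct and follows the same overall arc as the paper's: both express $g(y,\eta)$ as $1$ plus an explicit cubic in the rescaled variable (the paper writes $g(y,\eta)=1-P_y(X)$ with $X=\eta/a(y)$ and factors $P_y(X)=(X-1)^2(\gamma X+P_y(0))$; you expand in $w=X-1$), both plug in the power laws of Proposition~\ref{prop:abyc} to see the $y$-dependence collapse and establish positivity on a $y$-independent window, and both then control the series \eqref{eq:u_y}.

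The one genuine difference is in how the series is bounded. The paper writes each summand via the second-order Taylor integral remainder
\[
2^k T_y(\eta_k)=\int_0^{\eta-a(y)}T_y''\bigl(a(y)+s2^{-k}\bigr)\,\frac{\eta-a(y)-s}{2^k}\,\mathrm{d}s,
\]
and then estimates $\sup|T_y''|$ on $a(y)\cdot J$. For $y>y_c$ this sup picks up an extra $y^{4/3}$ from the term $y^{4/3}(Q')^2/(1-y^{4/3}Q)^2$, which is the source of the $y^{8/3}$ in the statement. You instead write $2^kT_y(\eta_k)=2^k\log(1+x_k)$ explicitly and use the one-sided inequality $0\leq\log(1+x)\leq x$; this bypasses the loss and gives $|u_y(\eta)|\leq C|J|^2y^{4/3}$, which you then only relax to $y^{8/3}$ to match the proposition. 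So your argument is both more elementary and strictly sharper ahead of the front. The paper's integral-remainder route has the compensating advantage that it does not rely on the precise cubic form of $g$ (only on $T_y(a(y))=T_y'(a(y))=0$ and control of $T_y''$), so it would transfer more readily to perturbations of the model.
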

\begin{proof}

Since $g(y,\cdot)$ is a polynomial of order three in $\eta$ such that:
\[g(y,a(y)) = 1,\qquad  \partial_\eta g(y,a'(y)) = 0,\] 
we can define $P_y$ polynomial of order three such that:
\[\forall \eta >0,\quad g(y,\eta) = 1-P_y\left(\frac{\eta}{a(y)}\right).\]
As $P_y(1) = P_y'(1)=0$, we get:
\[P_y(X) = (X-1)^2\left[\gamma X+P_y(0)\right],\]
where $\gamma>0$ is the leading coefficient of $P_y$.

We next compute, for $y\neq y_c$ (by continuity for $P_y(0)$):
\[\gamma = \frac{a'(y)^2a(y)^3}{4\lambda^4},\quad P_y(0) = b(y)+\frac{a(y)^2}{4\lambda^2}-\alpha y\left[b^\prime (y)+a^\prime(y) \, \frac{a(y)}{2\lambda^2}\right]+ \mathbf{1}_{\{y>y_c\}}.\]
Hence (adopting the notations $K_{a^-}$,$K_{a^+}$ and $K_b$ such that for $y<y_c,\, a(y) = K_{a^-}y^{2/5}$ and for $y>y_c,\, a(y) =K_{a^+}y^{2/3},\, b(y) = K_b y^{4/3}-1$ -- see the previous proposition):
\begin{itemize}
    \item[$\diamond$] for $y<y_c$, $\gamma = \frac{a(y)^5}{25y^2\lambda^4} = \frac{{K_{a^-}}^5}{25\lambda^4}$ and:
    \[
    P_y(0) = \frac{a^2}{4\lambda^2}-\frac{\alpha a'(y)ya(y)}{2\lambda^2} = \frac{a^2}{4\lambda^2}-\frac{5}{4}\cdot\frac{2 a^2}{10\lambda^2}=0.
    \]
    So, in that case, $P_y = \frac{{K_{a^-}}^5}{25\lambda^4}(X-1)^2X:=P(X)$ does not depend on $y$. As $P(1) = 0$, there exists $\delta\in(0,1)$ such that for all $ y < y_c$ and $\eta \in ]a(y) (1-\delta), a(y)(1+\delta)[$, $P\left(\frac{\eta}{a(y)}\right)<1$, hence $g(y,\eta)>0$.
    
    \item[$\diamond$] for $y>y_c$, $\gamma = \frac{4}{9}\frac{a(y)^5}{4y^2\lambda^4}=\frac{K^5_{a^+}}{9\lambda^4}y^{4/3}$ and:

    \begin{align*}
        P_y(0) & = (b(y)+1)-\frac{5}{3}(1+b(y))+\frac{a(y)^2}{4\lambda^2}-\frac{5a(y)^2}{12\lambda^2}, \\
        & = -y^{4/3}\left[\frac{K^2_{a^+}}{6\lambda^2}+\frac{2K_b}{3}\right]= -\gamma y^{4/3}\left[\frac{3\lambda^2}{2K^3_{a^+}}+\frac{6K_b\lambda^4}{K^5_{a^+}}\right],\\
        & =-\gamma y^{4/3}\left[1+6\times\frac{3^{2/3}\lambda^4 2
   ^{5/3}}{2^{8/3}\lambda^{12/3}3^{5/3}}\right]=-2\gamma y^{4/3}.
    \end{align*}
    Hence: $P_y(X) = \gamma y^{4/3}(X-1)^2(X-2)$, thus: $\forall y>y_c, \forall \eta \in ]0,2a(y)[, g(y,\eta)>1>0$.
\end{itemize}

That proves the first part of the proposition. Let us call $J_0$ a closed interval centered in 1 on which, for all $y\neq y_c$, $g(y,.)$ is positive, and on which $T_y$ is therefore well-defined.

Let us now consider $J\subset J_0$ an open interval centered in 1. For $y\neq y_c$, $\eta \in a(y)\cdot J$, let us define, for $k \in \mathbb{N}$:
\[\eta_k := a(y)+\frac{\eta-a(y)}{2^k}.\]
Next, as $T_y(a(y)) = T'(a(y)) = 0$, we get the following:
\[2^k T_y(\eta_k) = 2^k\int_{a(y)}^{\eta_k}T''_y(t)\frac{\eta_k-t}{2}\md t.\]
With the change of variables $s=2^k\left(t-a(y)\right)$, we get:
\begin{equation}
    2^k T_y(\eta_k) = \int_0^{\eta-a(y)}T''_y\left(a(y)+s2^{-k}\right)\frac{\eta - a(y) -s}{2^k}\md s.
    \label{aux}
\end{equation}
$T''_y$ is continuous on $a(y)\cdot J$, so the latter ensures that $\sum_{k\geq 0}2^kT\left(\eta_k\right)$ converges for all $\eta \in a(y)\cdot J$.

Finally, for $y\neq y_c$, we need to uniformly bound $\sum_{k\geq 0}2^kT\left(\eta_k\right)$ with regard to $\eta \in a(y)\cdot J$.
For $y<y_c$, from the first part of the proof, we have:
\[\forall \eta \in a(y) \cdot J, \quad T_y(\eta) = \log\left(1-P\left(\frac{\eta}{a(y)}\right)\right),\]
with $P(X) = \gamma X (X-1)^2$ and $\gamma$ independent of $y$ and $\eta$. Setting:
\[
\begin{array}{ccccl}
    F&:&J&\rightarrow& \R,  \\
    && x&\mapsto &\log\left(1-P(x)\right),
\end{array}
\]
we dispose of a smooth function, independent from $y$ and $\eta$, such that: 
\[
\forall \eta \in a(y)\cdot J, \quad T_y(\eta) = F\left(\frac{\eta}{a(y)}\right),
\]
and therefore $T''_y(\eta) = {F''\left({\eta}/{a(y)}\right)}/{a(y)^2}$. Following \eqref{aux}, we get (writing $|J|$ as the length of $J$):
\[\forall y<y_c, \eta\in a(y)\cdot J,\quad \sum_{k\geq 0}|2^k T_y(\eta_k)| \leq \sum_{k\geq 0}2^{-(k+1)} \|F''\|_{\infty,J} \frac{(\eta-a(y))^2}{a(y)^2}\leq |J|^2  \|F''\|_{\infty,J_0}.\]

For $y>y_c$, we have from above:\small
\[\forall \eta \in a(y) \cdot J,\quad  T_y(\eta) = \log\left(1-y^{4/3}Q\left(\frac{\eta}{a(y)}\right)\right),\]\normalsize
with $Q(X) = \gamma_Q(X-1)^2(X-2)$ ($\gamma_Q$ a constant independent of $y$ and $\eta$). A straight-forward calculus leads to:\footnotesize
\[T''_y(\eta) = -\frac{y^{4/3}}{a(y)^2}\left[\frac{Q''\left(\frac{\eta}{a(y)}\right)}{1-y^{4/3}Q\left(\frac{\eta}{a(y)}\right)}+y^{4/3}\frac{Q'\left(\frac{\eta}{a(y)}\right)^2}{\left(1-y^{4/3}Q\left(\frac{\eta}{a(y)}\right)\right)^2}\right].\]\normalsize
We recall that, additionally, for $y>y_c$ and $\eta\in a(y)\cdot J$, we have: $1-y^{4/3}Q\left(\frac{\eta}{a(y)}\right)>1$. Hence, from \eqref{aux}, we get: \footnotesize
\begin{align*}
    \forall y>y_c, \forall \eta\in a(y)\cdot J, \quad \sum_{k\geq 0}|2^k T_y(\eta_k)| &\leq y^{4/3}|J|^2\left[\|Q''\|_{\infty,J}+y^{4/3}\|{Q'}^2\|_{\infty,J}\right]\\
    &\leq y^{8/3}|J|^2\left[\frac{\|Q''\|_{\infty,J_0}}{{y_c}^{4/3}}+\|{Q'}^2\|_{\infty,J_0}\right].
\end{align*}\normalsize
\end{proof}

The last proposition allows us to complete our solution for \eqref{chap_cemracs_sex:eq:fusionaetb} for $y\neq y_c$ and $\eta \in a(y) \cdot J$, by defining:
\[u_1 : (y,\eta) \mapsto \sum_{k\geq 0}2^k T_y\left(a(y) + \left(\eta-a(y)\right)\,2^{-k}\right).\]
It also highlights the fact that this solution is local in trait around the mean trait $a(y)$.
Finally, we use it in \cref{chap_cemracs_sex:thm:main} to specify  the magnitude of the error terms in our approximation at large times.

\section{Discussion} 
\paragraph{Contributions} In this paper, we have developed a different framework than the one used for the study of asexual populations (\cite{Ber15,Bou17,calvez2018nonlocal}) by using a mixing operator to analyze the behaviour of the propagation front for sexual population. We have formally found an explicit approximation of the trait distribution during the invasion {by finding a solution to the limit problem at large times}. These formal computations have been numerically compared to the solution of \eqref{chap_cemracs_sex:eq:PDE_apres_simplification_et_variance_generale} and thus confirmed. All the computations have been made after having rescaled the partial dif\-fe\-ren\-tial Eq. \eqref{chap_cemracs_sex:eq:general}. By a variable change, we have that, for all growth rate at low density $\boldsymbol{r}>0$, carrying capacity $\boldsymbol{K}>0$ and segregational variance $\boldsymbol{\lambda}^2>0$, for a population with dispersive traits $\boldsymbol{\theta} \ge \boldsymbol{\thmin} >0$, the density $\boldsymbol{f}$ can be approximated at large time $\boldsymbol{t}>0$ by:\small
\[
\boldsymbol{f}(\boldsymbol{t},\boldsymbol{x},\boldsymbol{\theta}) \approx {\boldsymbol{K}\over\boldsymbol{\thmin}} \ \left\{ \begin{array}{l}
  \exp \left[-\ {1\over 4\boldsymbol{\lambda} ^2} \ \left[\boldsymbol{\theta} - \boldsymbol{\lambda}^{4/5}\left( {6 \boldsymbol{r} \boldsymbol{x}^2}\right)^{1/5}\right]^2 \right],\text{ for } \boldsymbol{x}\leq \boldsymbol{y_c}\ \boldsymbol{t}^{5/4}, \\ \\
  \exp\left[\boldsymbol{r} \boldsymbol{t}-\left({9\boldsymbol{x}^4 \over 256 \, \boldsymbol{\lambda}^2\boldsymbol{t}^2}\right)^{1/3}\right] \ \exp\left[ - {1\over 4\boldsymbol{\lambda}^2}{\left[\boldsymbol{\theta} - \left({3\boldsymbol{\lambda}^2 \boldsymbol{x}^2\over 2\boldsymbol{t}}\right)^{1/3} \right]^2}\right] , \\ \\
  \qquad\qquad\qquad\qquad\qquad\qquad\qquad\qquad\ \text{ for } \boldsymbol{x}\geq \boldsymbol{y_c}\ \boldsymbol{t}^{5/4}.
\end{array}\right.
\]\normalsize
with:
\[
\boldsymbol{y_c} = y_c\, \sqrt{\frac{\boldsymbol{\thmin}}{\boldsymbol{r}}}\, \boldsymbol{r}^{5/4} =4 \left[\lambda\over 3\right]^{1/2}\sqrt{\boldsymbol{\thmin}}\, \boldsymbol{r}^{3/4}= 4\left[ {\boldsymbol{\lambda} \over 3} \right]^{1/2}\, \boldsymbol{r}^{3/4}.
\]
{\paragraph{Difference in acceleration rate between asexual and sexual invasive populations}

Our study shows that the effect of spatial sorting only, through the evolution of dispersion, accelerates the speed at which a sexual population invades. The rate of this acceleration, of $t^{5/4}$, is lower than when considering the influence of the same phenomenon on asexual populations ($t^{3/2}$, see \cite{Ber15,Bou17,calvez2018nonlocal}). Mathematically, the blending inheritance property of the infinitesimal model operator reduces the effect of the spatial sorting by crossing extremely dispersive individuals with less dispersive ones, which does not happen for individuals reproducing clonally.}

\paragraph{Extension: Shape of the front}

However, there are still structural questions to answer on the asymptotic behaviour of the front that we can observe numerically. For instance, the additional Fig.~\ref{chap_cemracs_sex:addfig} allows us to study the deformation of the front propagation, more precisely the shape of the transition front. In Fig.~\ref{chap_cemracs_sex:addfig} (a), the spatial distribution $\varrho$ is displayed with respect to a re-centered scale in:
\begin{equation}
  X_{1/2}(t) = \sup\{x\in\R, \varrho(t,x) \,=\,1/2\}.
  \label{chap_cemracs_sex:def:X_un_demi}
\end{equation}
We can observe a flattening of the front shape, as $t\to+\infty$. More precisely, Fig~\ref{chap_cemracs_sex:addfig} (b), displaying $\varrho$ with respect to the re-scaled variable $\left( x-X_{1/2}(t) \right)\,t^{-1/4}$, shows that the shape of the front seems to flatten at order $t^{1/4}$, as the different curves overlap.
\begin{figure}
  \centering
  \subfloat[]{\includegraphics[scale=0.45]{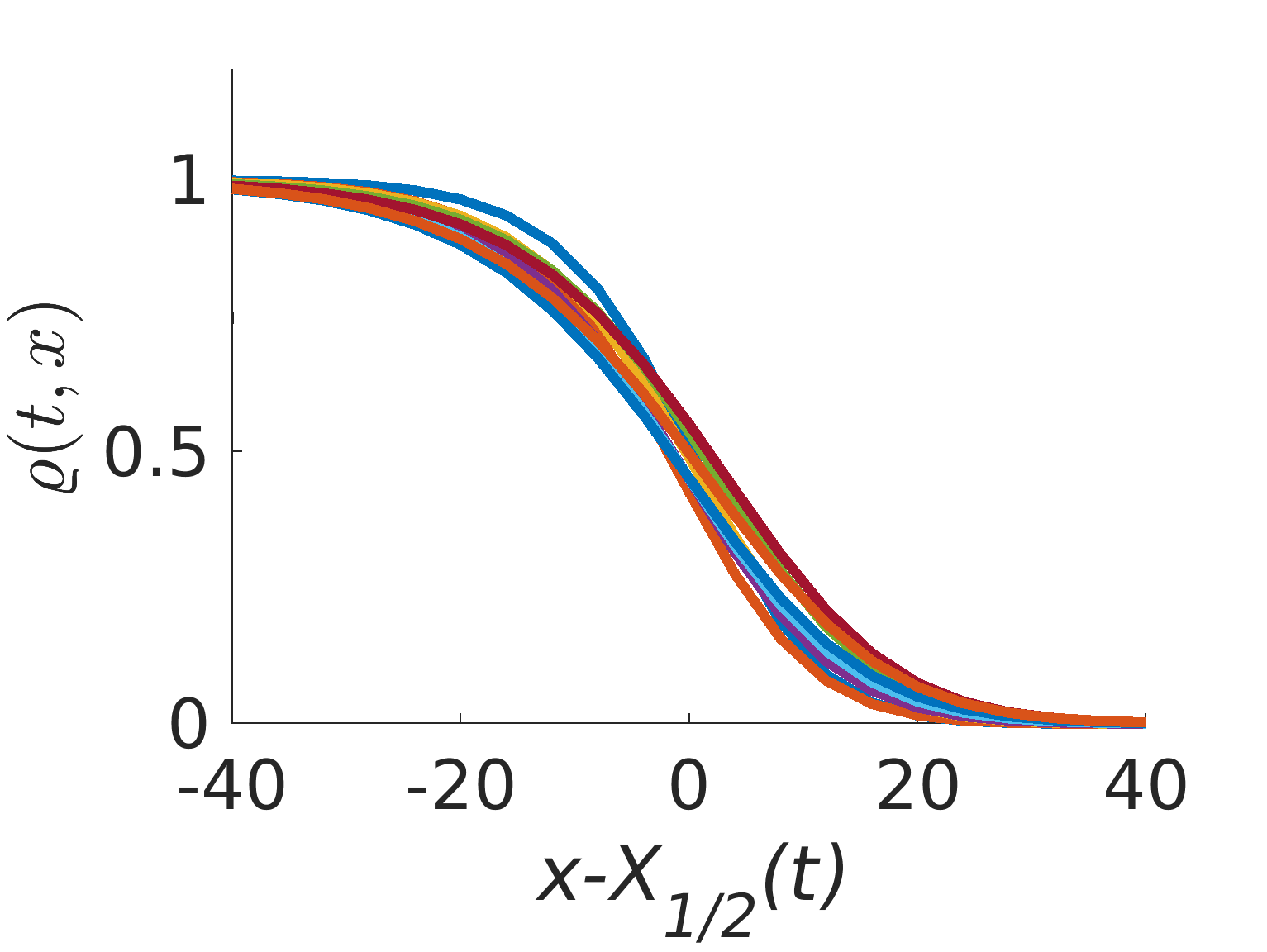}} \hspace{0.3cm}
  \subfloat[]{\includegraphics[scale=0.45]{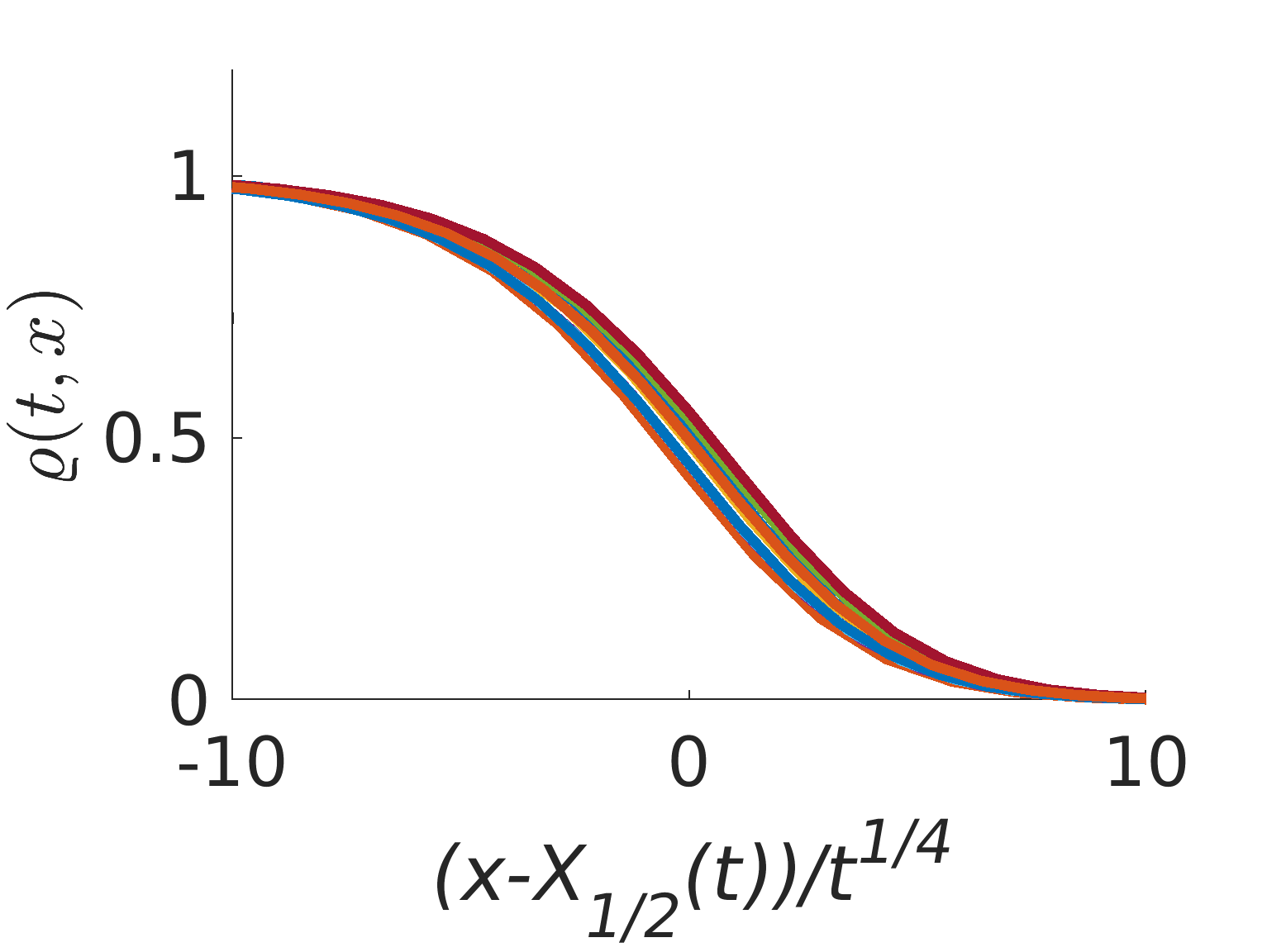}}
  \caption[Plots of the density $\varrho(t,\cdot)$ of a sexual population, with respect to re-centered variable]{\textbf{Plots of the density $\varrho(t,\cdot)$ of a sexual population, with respect to re-centered variables.} The two plots show the evolution of the population density, associated to \eqref{chap_cemracs_sex:eq:PDE_apres_simplification_et_variance_generale}, for successive times at regular intervals from $t=20$ to $t=200$, with respect to (a) the re-centered variable $x-X_{1/2}(t)$, and (b) to the re-scaled variable $(x-X_{1/2}(t))t^{-1/4}$, with $X_{1/2}(t)$ defined in \eqref{chap_cemracs_sex:def:X_un_demi}. The parameters are $\delta t = 0.02$, $\delta x = 4$, $\delta \theta =2 /3$, $x_{\max} = 3000$ and $\theta_{\max} = 201$. {\color{black}Note that the $x$-axis are different between the two plots, for the sake of clarity.}}
  \label{chap_cemracs_sex:addfig}
\end{figure}

\paragraph{Expansion load} 
Here, we consider only a trait linked to the dispersive ability, thus isolating the sole effect of spatial sorting in range expansions, for which there existed no previous precise results. By doing so, our model does not account for any process of selection by adaptation to the local environment. However, in cases of fast range expansion, a phenomenon called the \emph{expansion load} can occur \cite{Peischl_Dupanloup_Kirkpatrick_Excoffier_2013}. As the density of individuals at the front is low, the effective strength of natural selection is reduced allowing deleterious mutations to accumulate at the front. That would eventually undermine the invasion process by reducing the fitness of leading individuals (see \cite{doi:10.1111/j.1461-0248.2010.01505.x}), with the potential effect of slowing down the speed of the front in comparison to the asymptotic formal result of our study. Nevertheless, the clear relationship between the effect of spatial sorting and expansion load is yet to be explored, as a recent analysis using a discrete space framework seems to indicate that the evolution of dispersal rate can prevent expansion load in certain cases (see \cite{Peischl_Gilbert_2018}). By isolating the effect of spatial sorting, our study can therefore constitute a first step in understanding the intricate relationship between the evolution of dispersion and of life history traits, ultimately providing tools to analyse the source of variability in range expansions (see \cite{Williams_Hufbauer_Miller_2019}).

{\color{black}Because the formal computations ignore competition ahead of the front}, even though the simulations seems to va\-li\-da\-te our results, this paper has to be seen as a premise for a consistent and rigorous proof for this problem.  

\section*{Aknowledgement}

This project has received funding from the European Research Council (ERC) under the European Union’s Horizon 2020 research and innovation program (grant agreement No 639638) and from the ANR projects NONLOCAL (ANR-14-CE25-0013) and RESISTE (ANR-18-CE45-0019).

Special thanks to Vincent Calvez and Ga\"el Raoul for the supervision of this CEMRACS project, and to Joachim Crevat for his central role in the genesis of this work. Thanks also to Vincent Calvez, Sepideh Mirrahimi, Lionel Roques, Joachim Crevat, Barbara Neto-Bradley, Linn\'ea Sandell, Gil Henriques, Sarah Otto and Ailene Macpherson for helpful comments.

\printbibliography

\end{document}

% --- supplement: supplementary.tex ---

\maketitle

\section{Simulations with a different initial distribution} In this section, we explore the robustness of the numerical confirmation of our approximation drawn in the associated article, with regard to the initial conditions. We hereby present the simulations given by the explicit Euler scheme, with the same model parameters as for the figures in the associated article. However, we assume that the initial distribution is a Dirac distribution at $x=0$ and $\theta=1$.

The same conclusions seem to hold: at large time $t$, this invasion seems to follow the same evolution as when being initially normally distributed (\textit{cf.} the associated article). More precisely, we can see that the propagating front accelerates (see Fig~\ref{chap_cemracs_sex:cas_dirac:fig1} (a)) and that the acceleration in space can be quantified similarly: $X(t) =y_ct^{5/4}$, with $y_c >0$.

Fig.~\ref{chap_cemracs_sex:cas_dirac:film} displays the numerical solution {\color{Black}of} our differential equation with a Dirac initial distribution for successive times. It is compared to the {\color{Black} mean of the dispersive trait} (red line) derived from the asymptotic approximation stated in Conjecture~1. Behind the front, the distribution seems to be stationary at large time.

\begin{figure}[!h]
    \centering
%    \subfloat[]{\includegraphics[scale=0.4]{Supplementaries/cas_dirac/rho_sex_en_fonction_de_x.png}} \hspace{0.3cm}
    \subfloat[]{\includegraphics[scale=0.65]{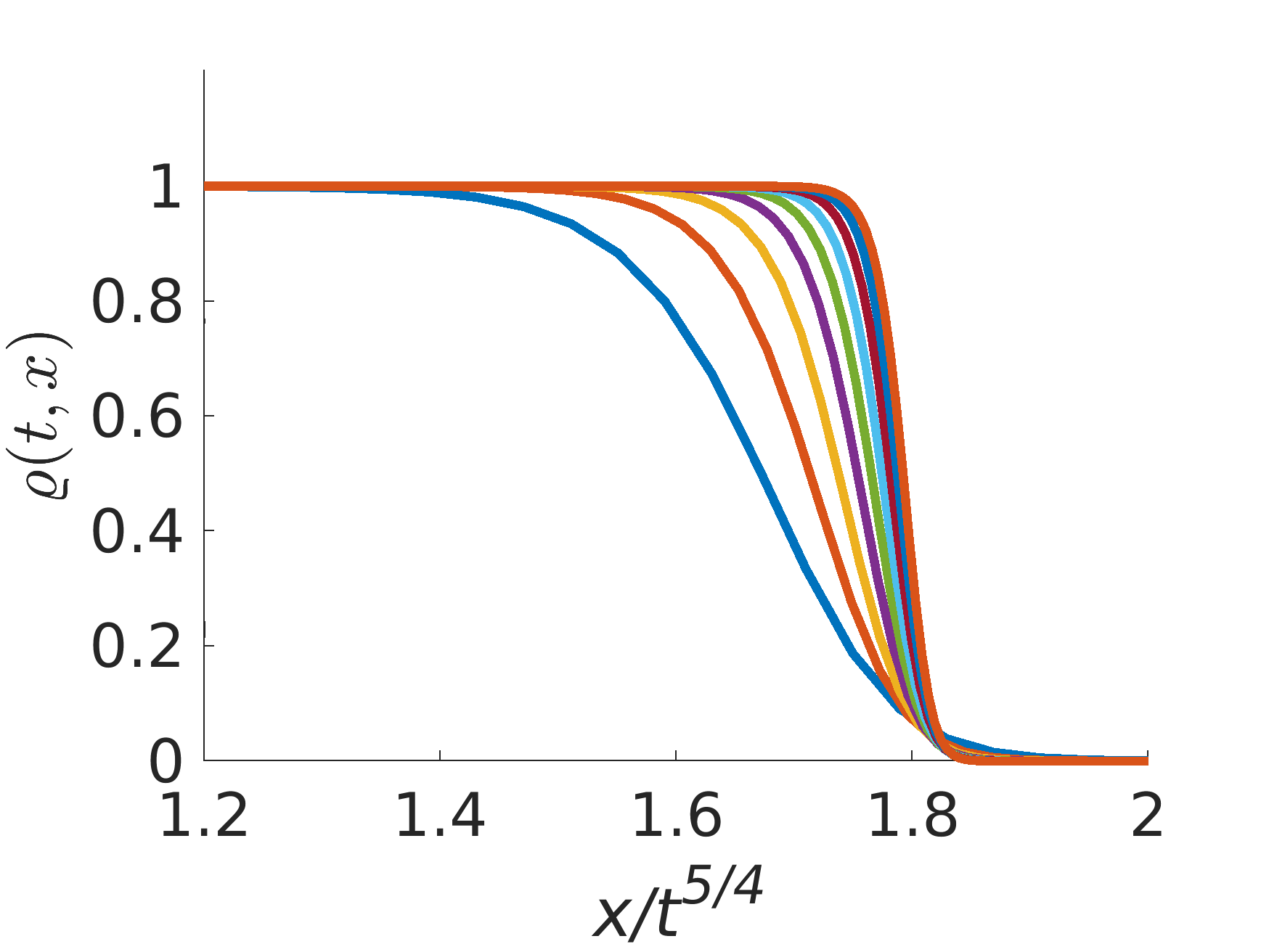}} 
\\    \subfloat[]{\includegraphics[scale=0.65]{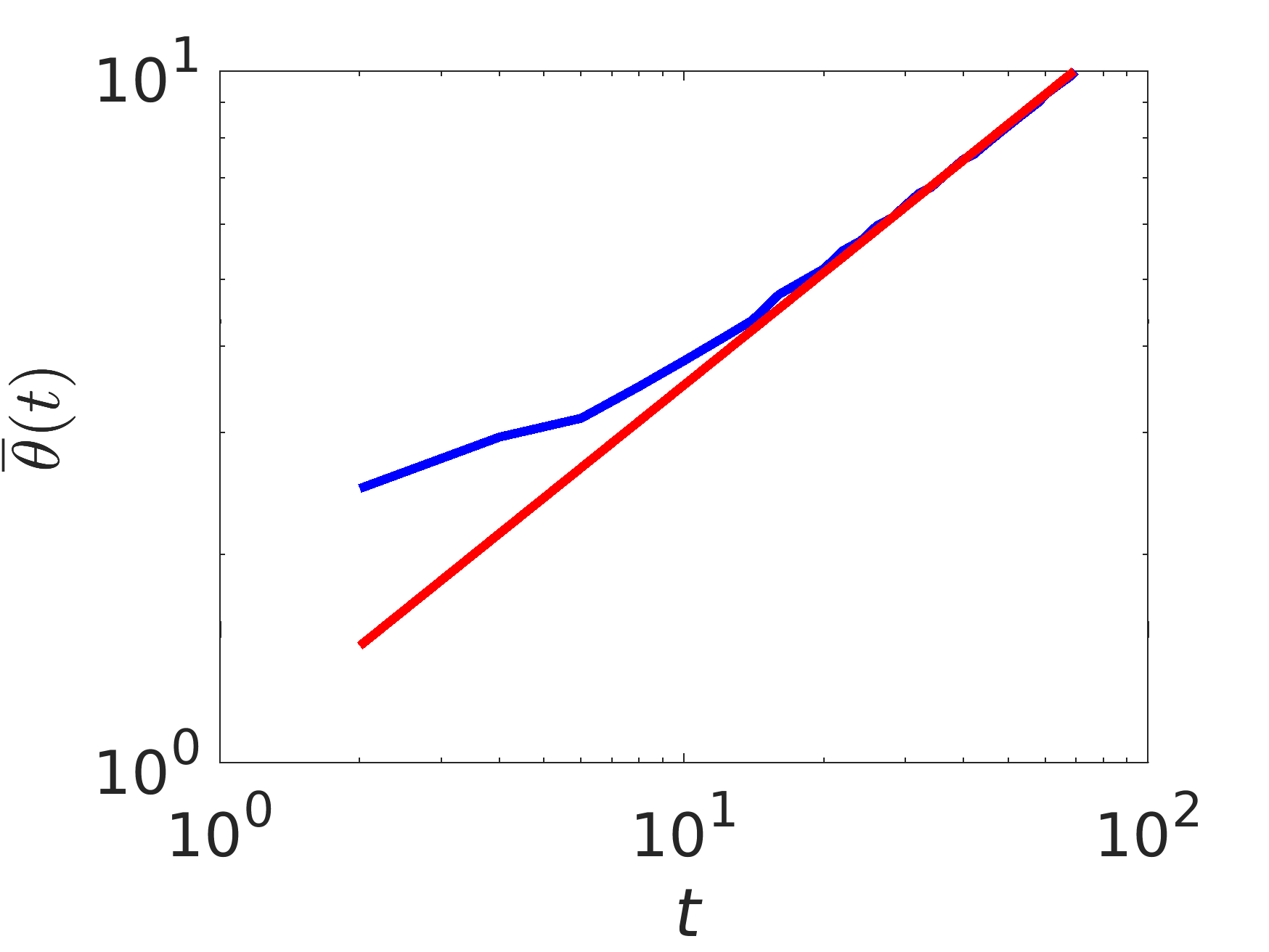}} 
    \caption[Simulations of the invasion of a sexual population, initially distributed according to a Dirac distribution]{\textbf{ Simulations of the invasion of a sexual population, initially distributed according to a Dirac distribution}, with parameters $\delta t =  0.02$, $\delta x = 4$, $\delta \theta =2 /3$, $x_{\max} = 3000$ and $\theta_{\max} = 201$. (a) Plot of the {\color{Black}population size} $\varrho(t,\cdot)$ for successive fixed times at regular intervals from $t=20$ to $t=200$, with respect to  the auto - similar variable $x t^{ - 5/4}$. (b) Plot of the {\color{Black} mean of the dispersive trait} $\overline{\theta}^{num}(t)$  at the front position with respect to time (blue curve) and of the function $t \to {\color{Black}1.02}\, t ^{\color{Black}0.54}$, given by a linear regression with $R^2=1$ (red curve), in $\log - \log$ scale.}
    \label{chap_cemracs_sex:cas_dirac:fig1}
\end{figure}

\begin{figure}[!h]
    \centering
    \subfloat[$t=50$]{\includegraphics[scale=0.4]{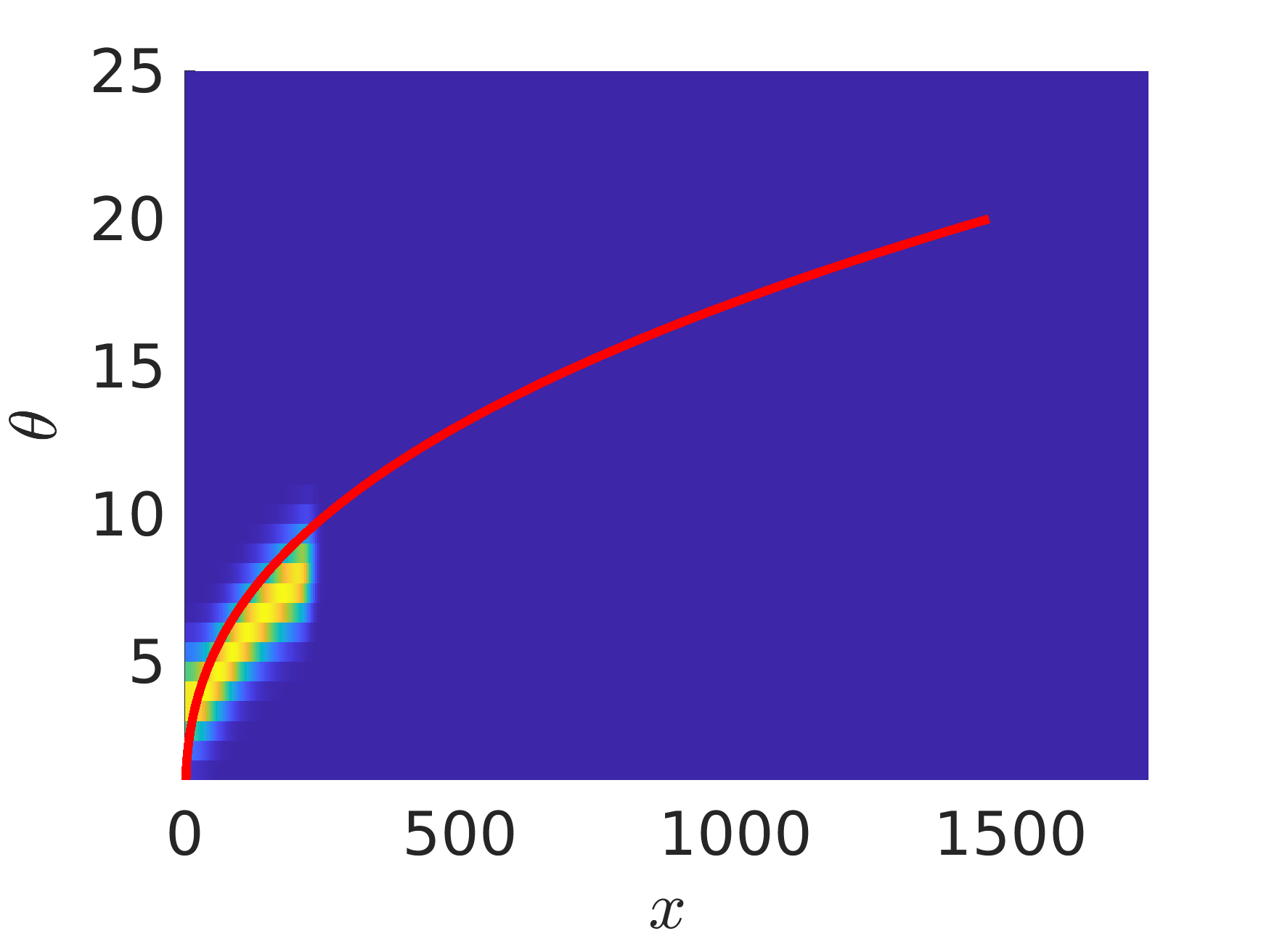}}\hspace{0.3cm}
    \subfloat[$t=100$]{\includegraphics[scale=0.4]{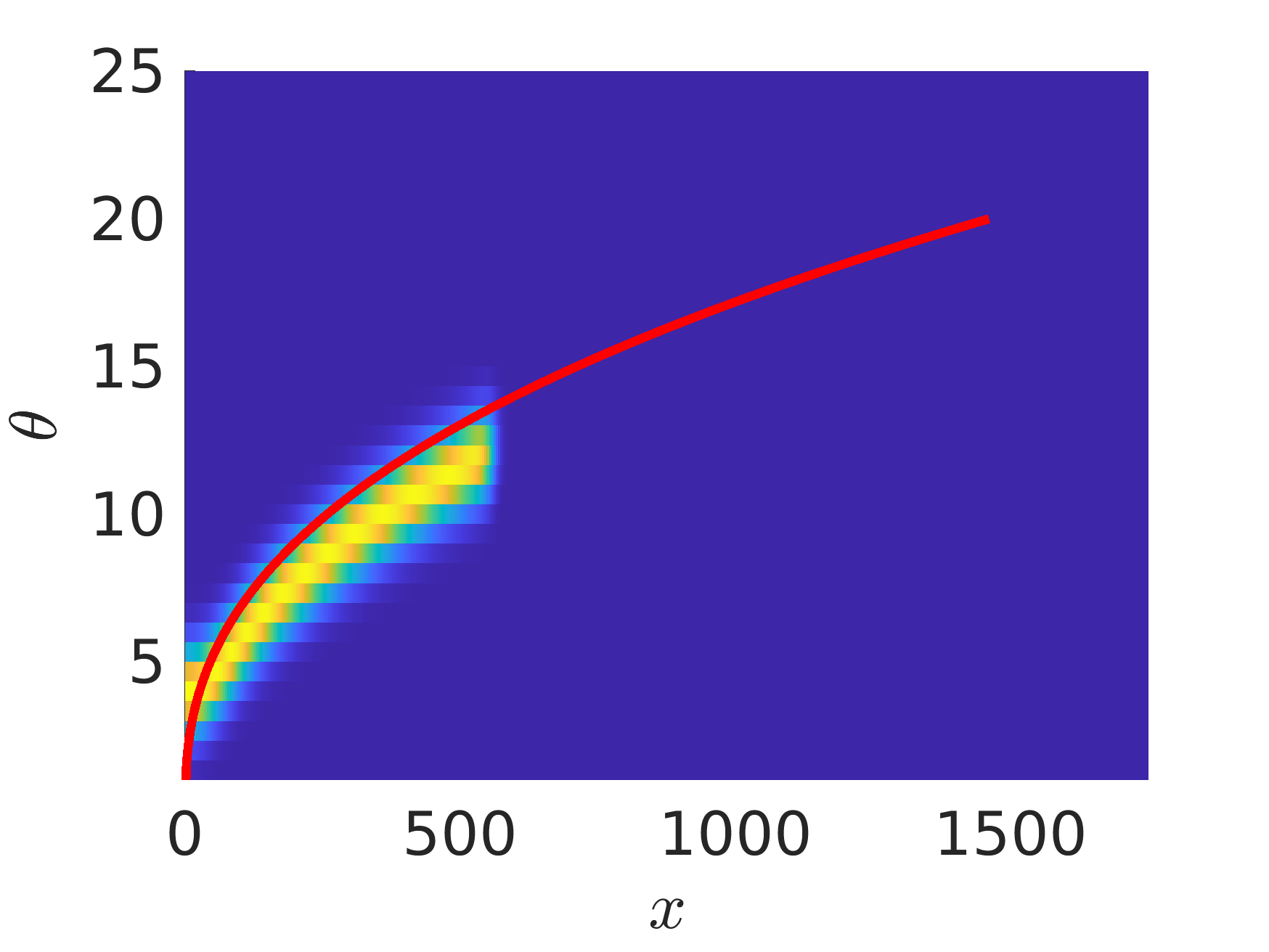}}\\
    \includegraphics[scale=0.4]{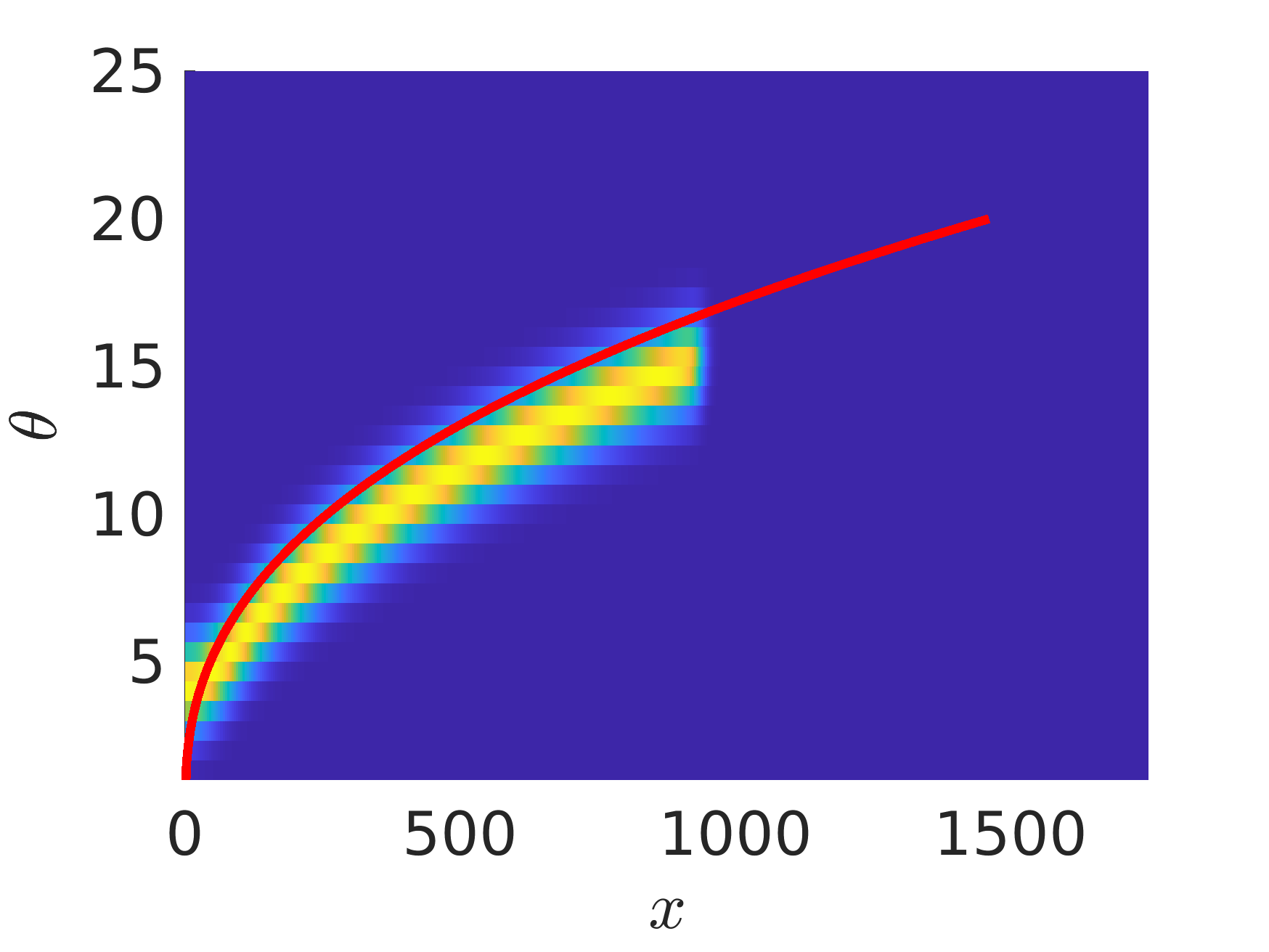}\hspace{0.3cm}
    \subfloat[$t=200$]{\includegraphics[scale=0.4]{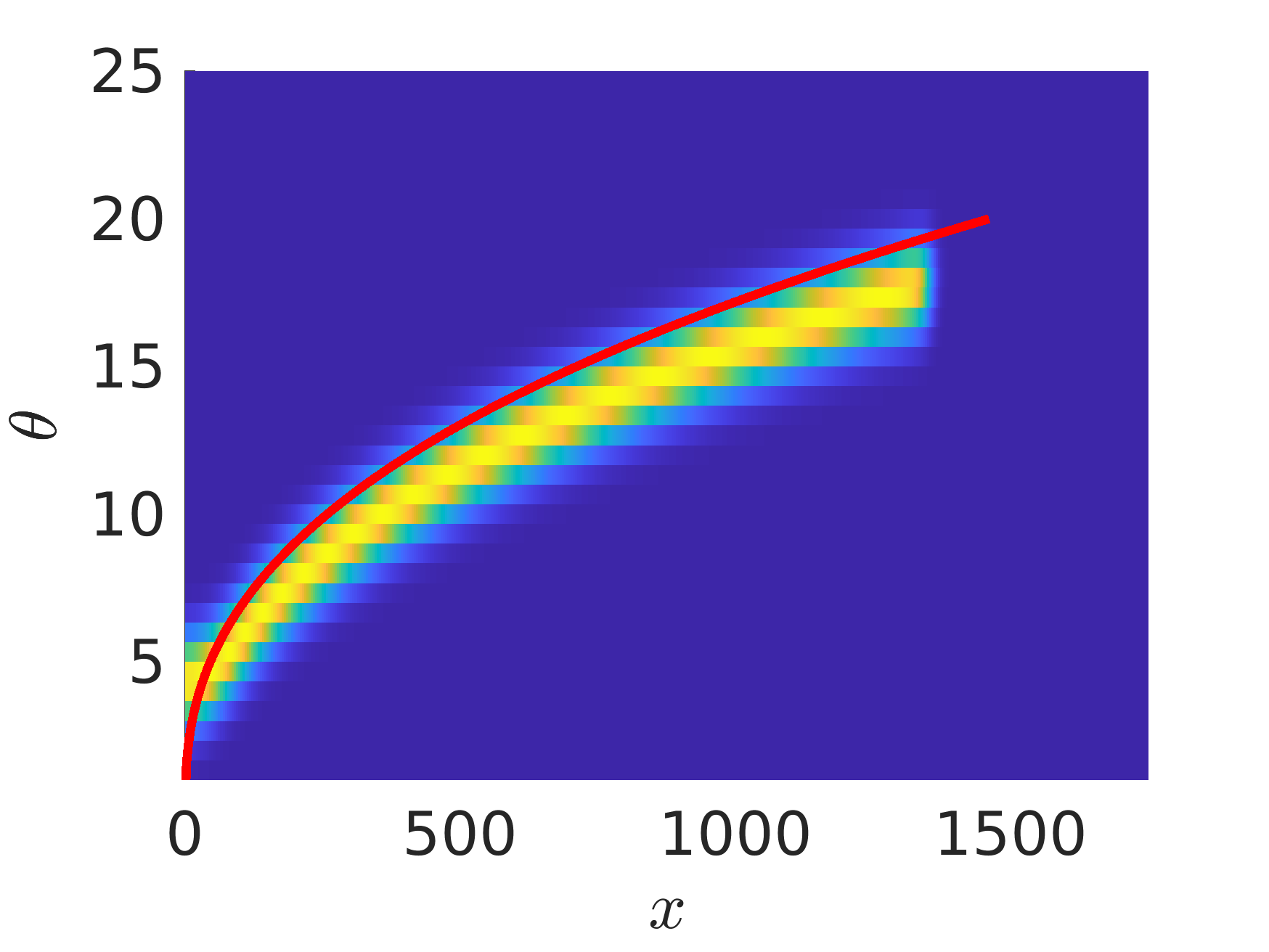}}
    \caption[Contour lines of the trait distribution during the invasion of a sexual population, initially distributed according to a Dirac distribution]{\textbf{Contour lines of the trait distribution during the invasion of a sexual population, initially distributed according to a Dirac distribution}, given by simulations, at (a) $t=50$ (b) $t=100$ (c) $t=150$ (d) $t=200$. The parameters are $\delta t =  0.02$, $\delta x = 4$, $\delta \theta =2 /3$, $x_{\max} = 3000$ and $\theta_{\max} = 201$.  The red line represents the approximation of the mean  trait behind the {\color{Black}propagating front}.}
    \label{chap_cemracs_sex:cas_dirac:film}
\end{figure}

\section{Simulations with a different value for \texorpdfstring{$r$}{Lg}} This supplementary material shows the simulations of the invasion of a sexual population, initially gaussian distributed. We change the value of the growth rate at low density: we take $r=0.1$.

As for the other cases, the {\color{Black}propagating front} accelerates (Fig.~\ref{chap_cemracs_sex:diff_r:fig1}). The front position is given by:
\[
X^{num}(t) = y_c(r=0.1) t^{5/4},\qquad \hbox{for }y_c^{num}(r=0.1)>0.
\]
As seen in the article, the constant $y_c^{num}\approx 0.3$ depends on the growth rate at low density ($r$). We have already discussed about the general case (with a general coefficient $r$). We have guessed that the front at time $t>0$ is at the position:
\[
X(t) = 4\left( \frac{\boldsymbol{\lambda}}  3\right)^{1/2} \, \boldsymbol{r}^{3/4} \ t^{5/4}= {2\over \sqrt 3\  5^{3/4}}\,t^{5/4}\approx 0.35\, t^{5/4},
\]
which is {\color{Black}less consistent} with our simulations {\color{Black}$($maybe because the time is not large enough$)$}. Thus the growth rate at low density $r$ is stronger, the invasion is faster: when individuals have more children, the population can invade faster areas. We present also the evolution of the invasion in Fig.~\ref{chap_cemracs_sex:cas_dirac:film}.

\begin{figure}[!h]
    \centering
%    \subfloat[]{\includegraphics[scale=0.4]{}} \hspace{0.3cm}
    \subfloat[]{\includegraphics[scale=0.65]{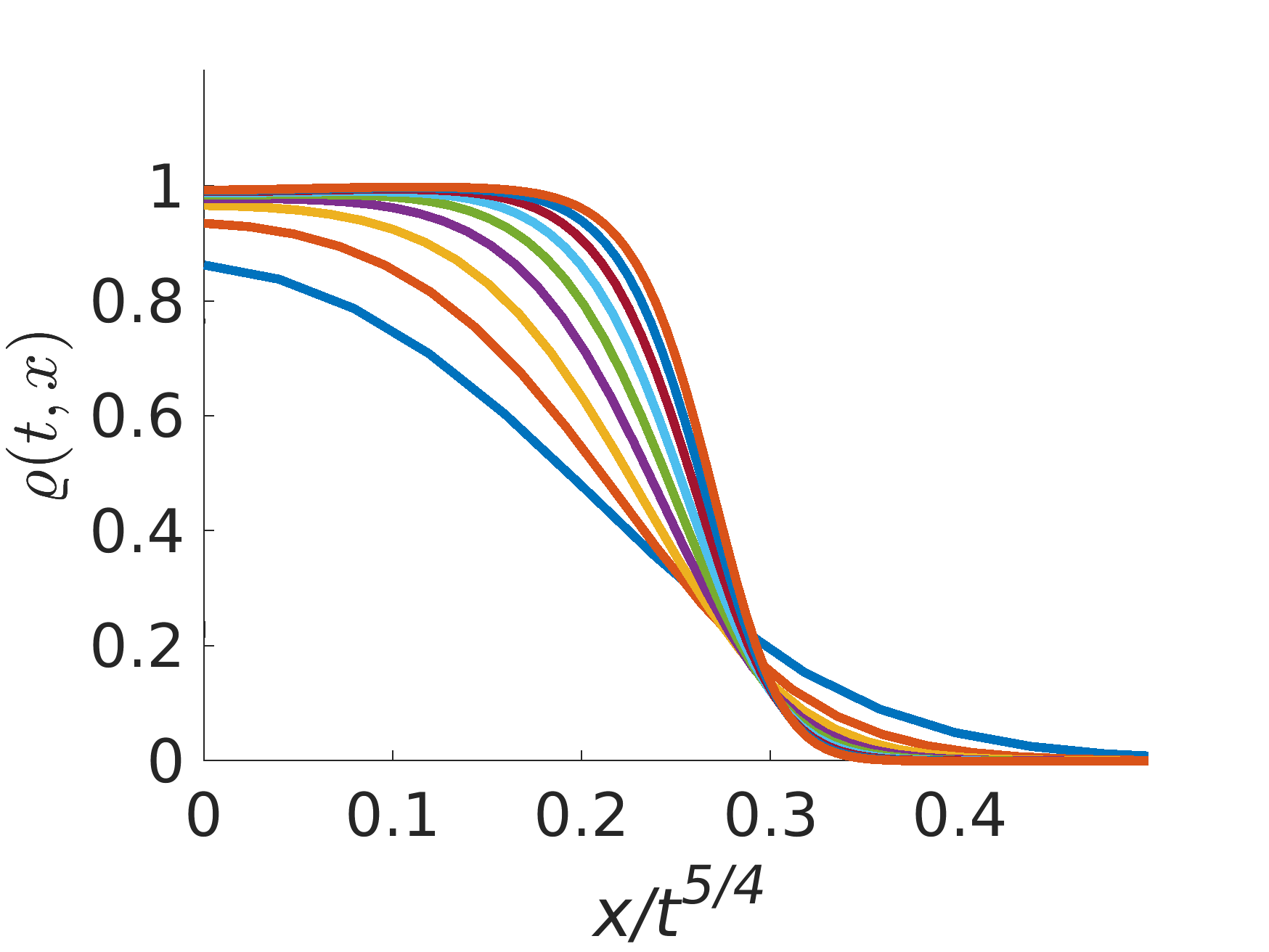}} 
\\    \subfloat[]{\includegraphics[scale=0.65]{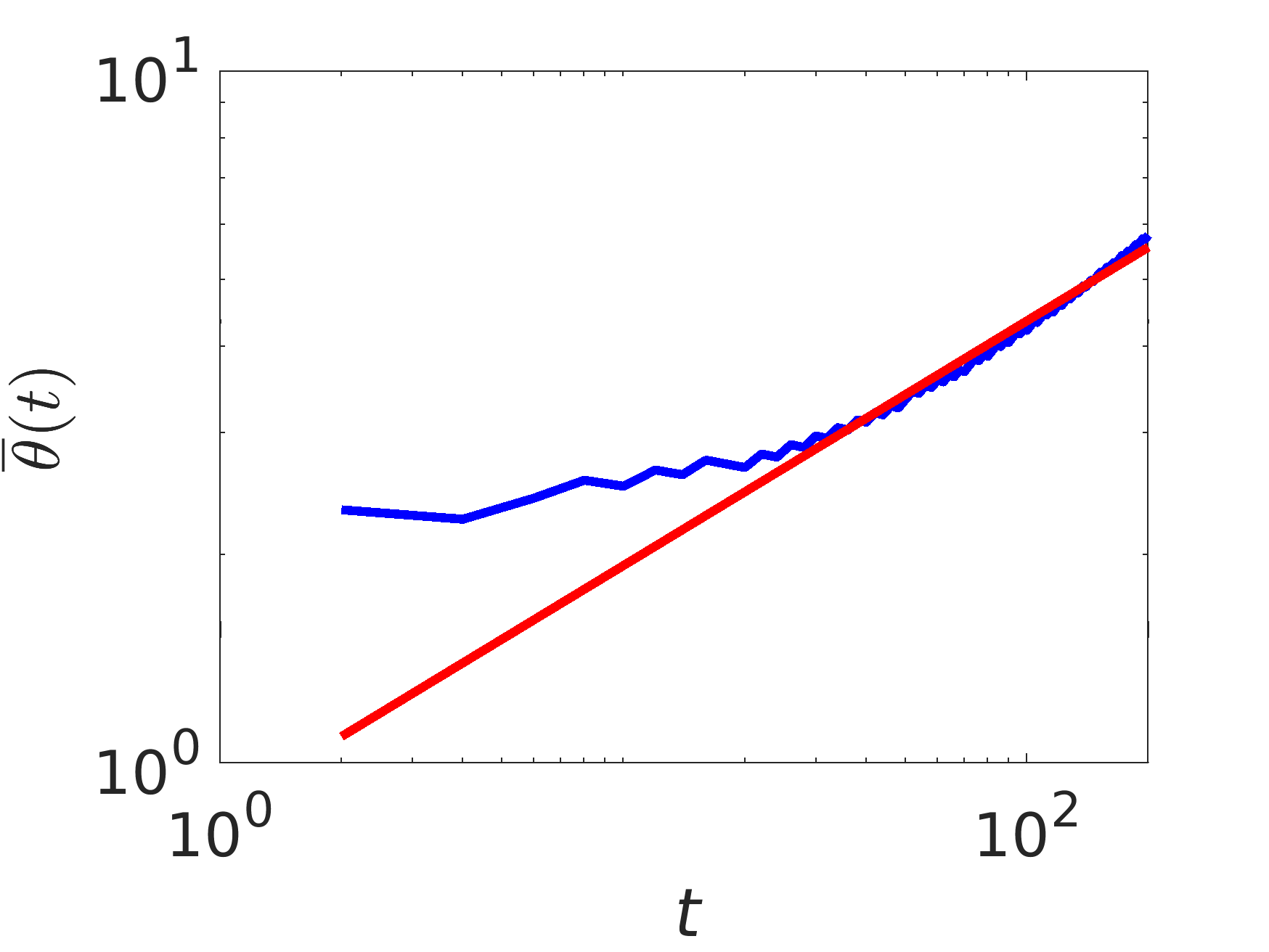}} 
    \caption[Simulations of the invasion of a sexual population, initially gaussian distributed, with a different growth rate at low density]{\textbf{ Simulations of the invasion of a sexual population, initially gaussian distributed, with a different growth rate at low density ($r=0.1$)}, with parameters $\delta t =  0.02$, $\delta x = 4$, $\delta \theta =2 /3$, $x_{\max} = 3000$ and $\theta_{\max} = 201$. (a) Plot of the {\color{Black}population size} $\varrho(t,\cdot)$ for successive fixed times at regular intervals from $t=20$ to $t=200$, with respect to the auto - similar variable $x t^{ - 5/4}$. (b) Plot of the {\color{Black} mean of the dispersive trait} $\overline{\theta}^{num}(t)$  at the front position with respect to time (blue curve) and of the function $t \to {\color{Black}0.85}\,t ^{\color{Black}0.35}$ (red curve), in $\log - \log$ scale. The function $t \to {\color{Black}0.85}\,t ^{\color{Black}0.35}$ is given by a linear regression, with $R^2=0.983$.}
    \label{chap_cemracs_sex:diff_r:fig1}
\end{figure}

\begin{figure}[!h]
    \centering
    \subfloat[$t=50$]{\includegraphics[scale=0.4]{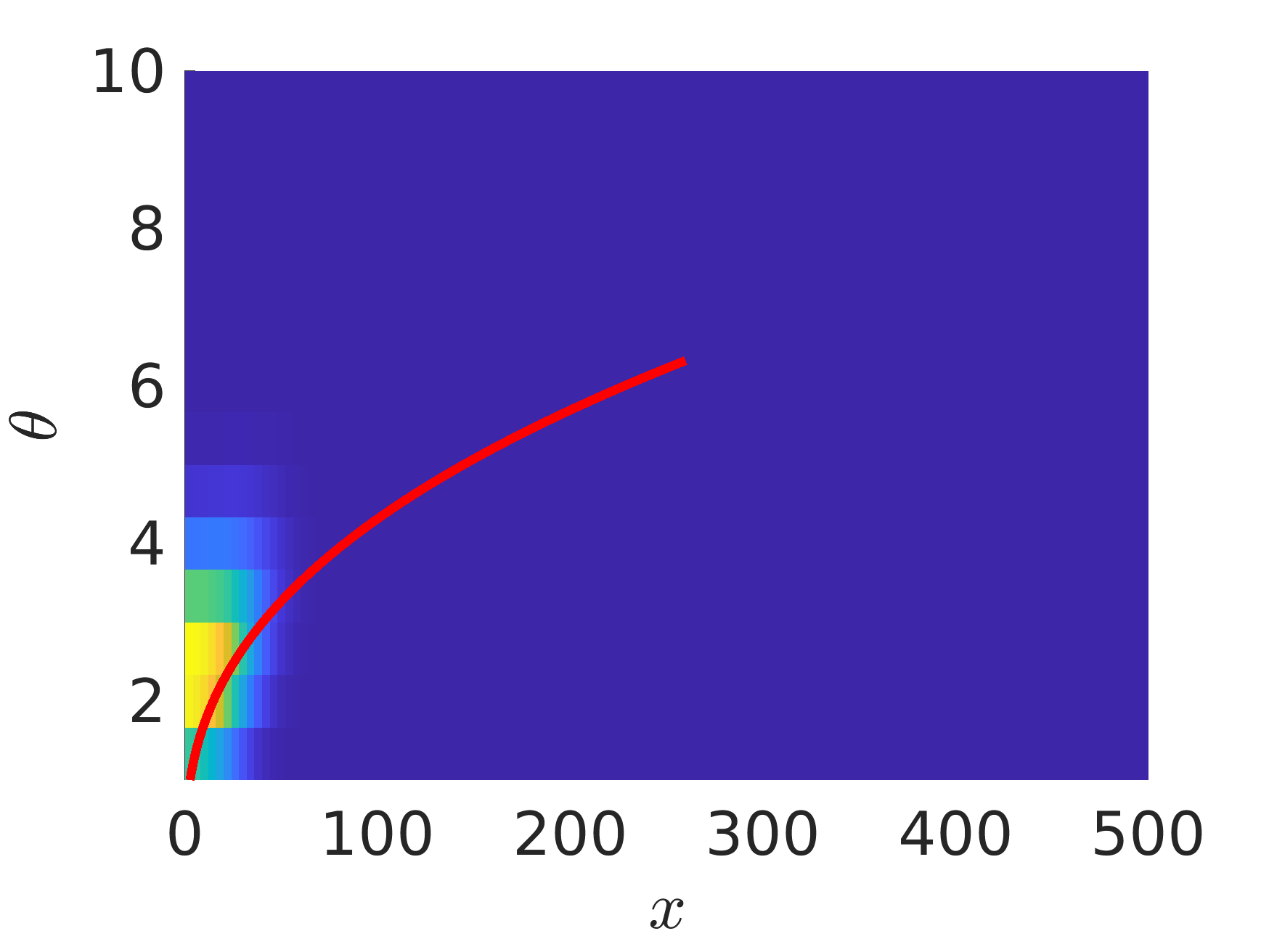}}\hspace{0.3cm}
    \subfloat[$t=100$]{\includegraphics[scale=0.4]{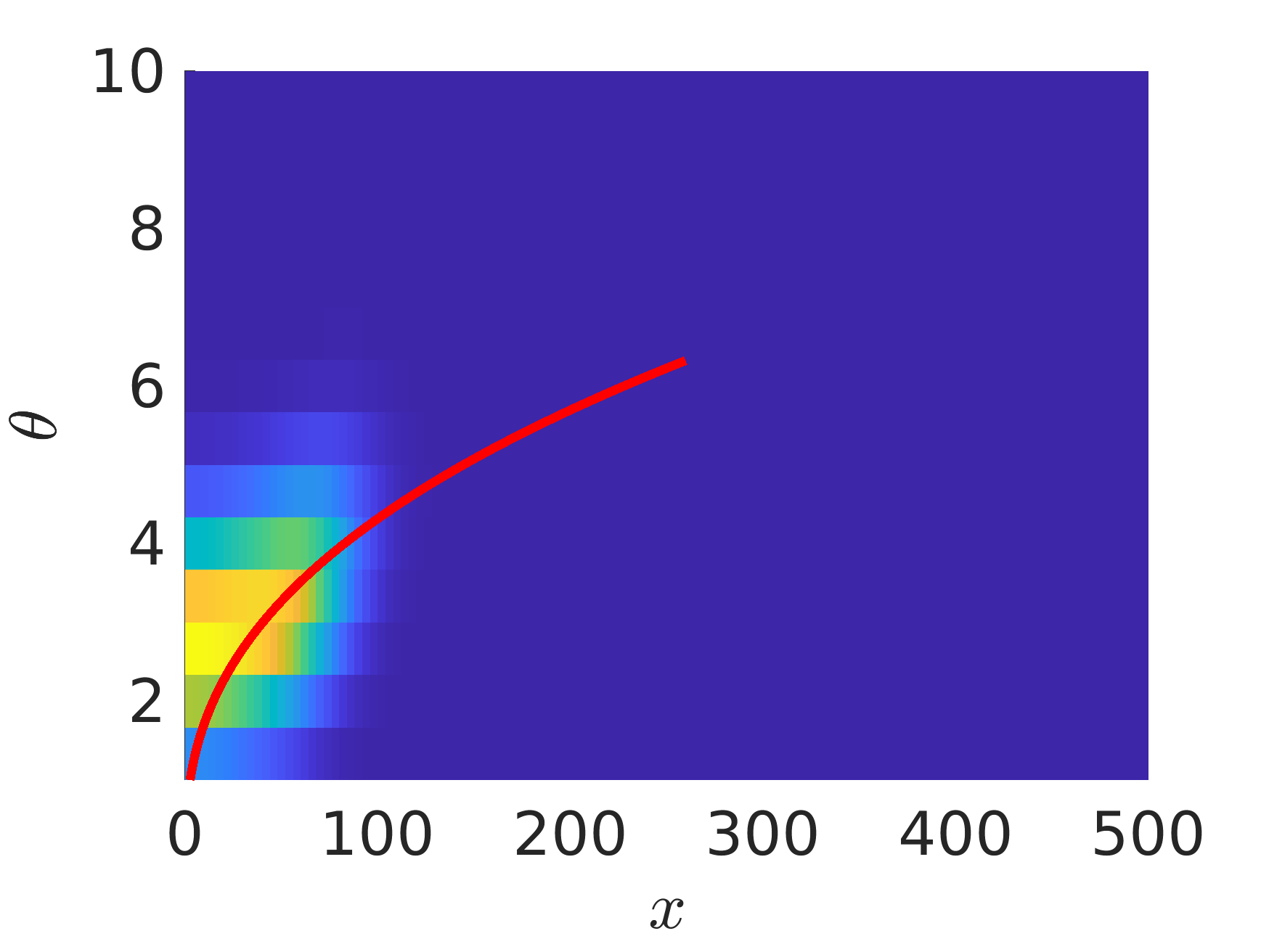}}\\
    \subfloat[$t=150$]{\includegraphics[scale=0.4]{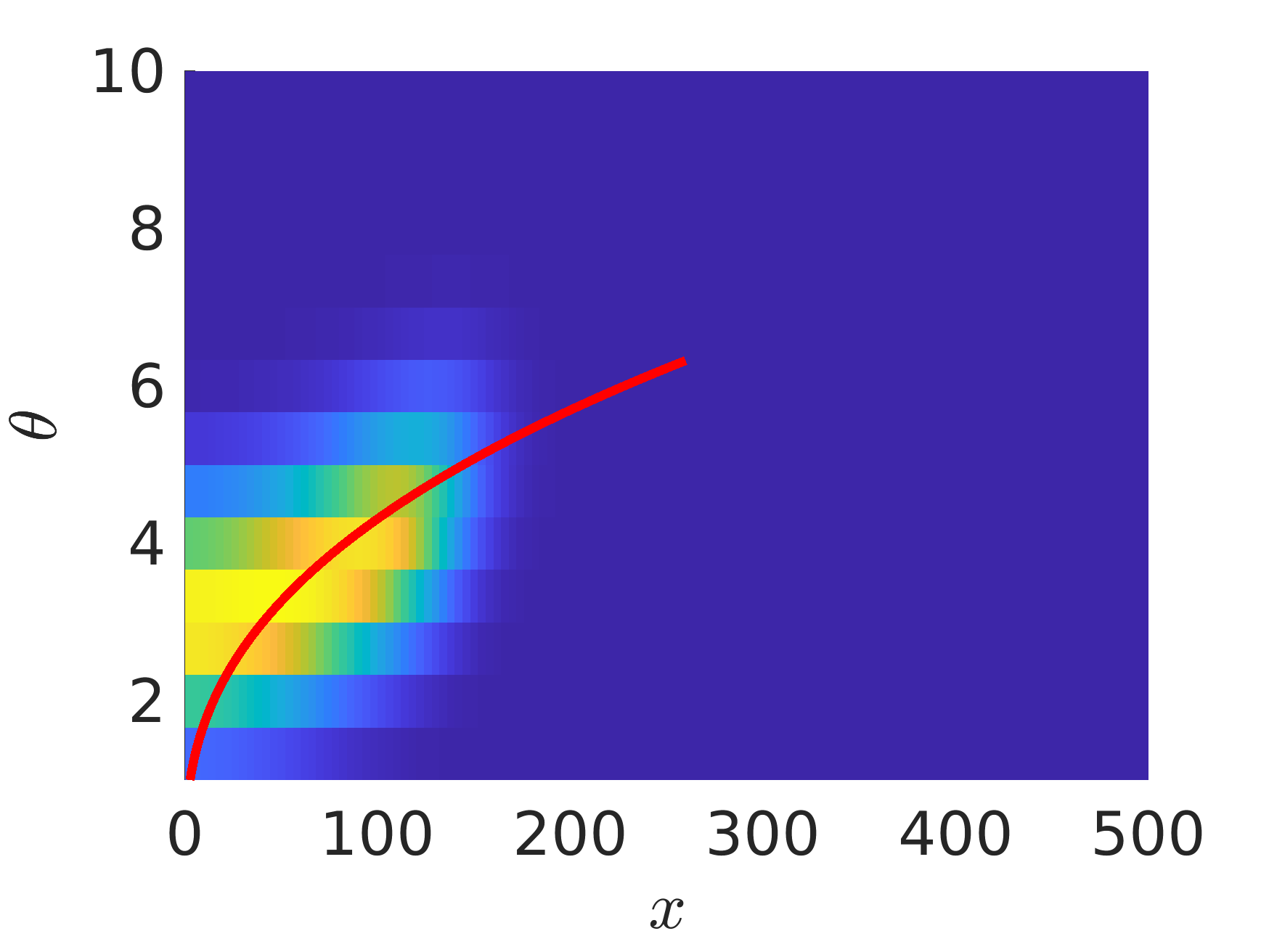}}\hspace{0.3cm}
    \subfloat[$t=200$]{\includegraphics[scale=0.4]{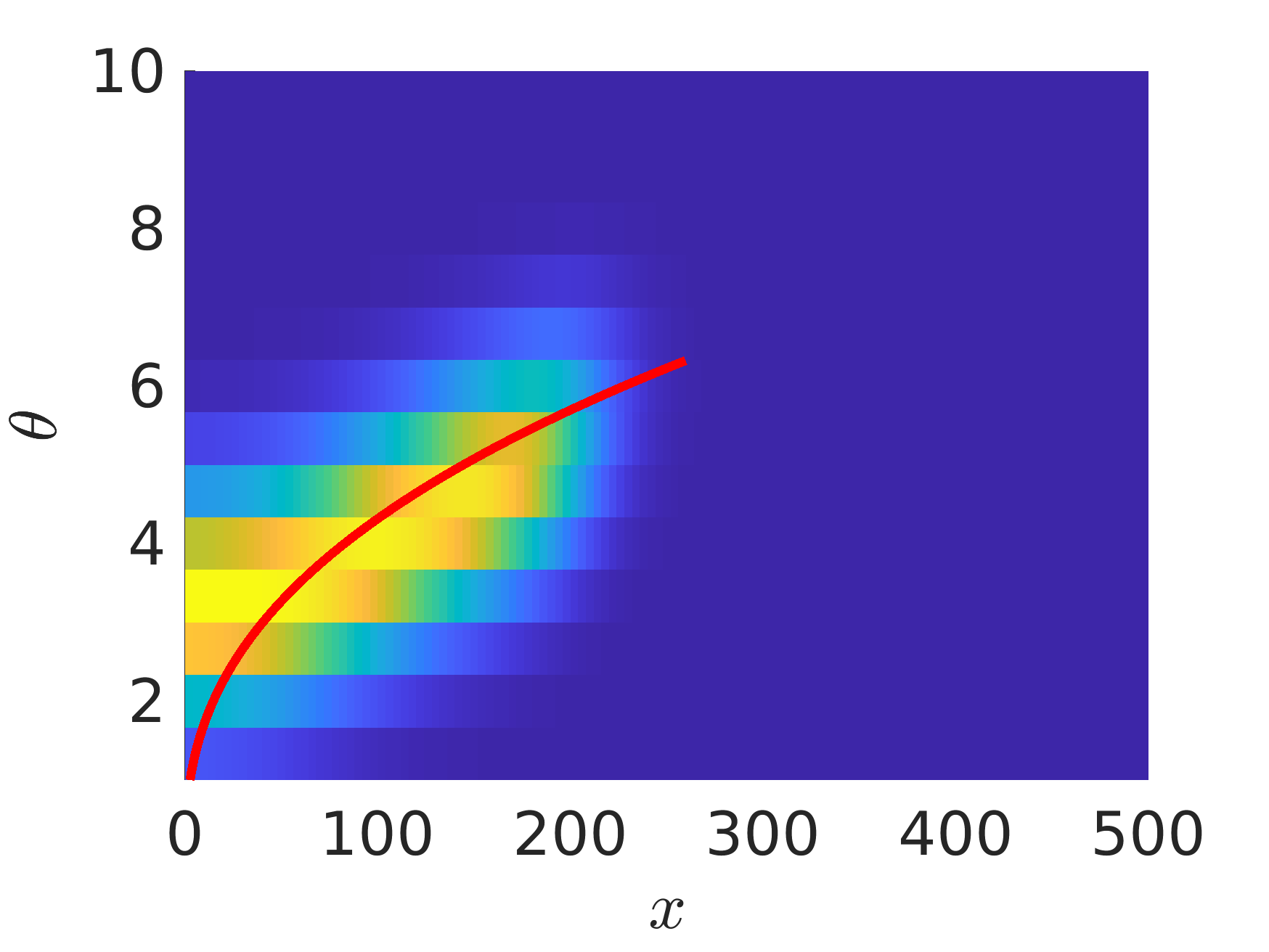}}
    \caption[Contour lines of the trait distribution during the invasion of a sexual population, initially gaussian distributed, with a different growth rate at low density]{\textbf{Contour lines of the trait distribution during the invasion of a sexual population, initially gaussian distributed, with a different growth rate at low density ($r=0.1$)}, given by simulations, at (a) $t=50$ (b) $t=100$ (c) $t=150$ (d) $t=200$. The parameters are $\delta t =  0.02$, $\delta x = 4$, $\delta \theta =2 /3$, $x_{\max} = 3000$ and $\theta_{\max} = 201$.  The red line represents the approximation of the mean  trait behind the {\color{Black}propagating front}.}
    \label{chap_cemracs_sex:diff_r:film}
\end{figure}

\section{Simulations with a different value for \texorpdfstring{$\lambda$}{Lg}} This last supplementary material is devoted to simulations of the invasion of a sexual population, initially gaussian distributed. We change the value of the segregrational variance: we take $\lambda=1$.

With these parameters, we have approximated the position of the {\color{Black}propagating front} at time $t\ge 0$ by:
\[
X(t) = 4\left( \frac{\boldsymbol{\lambda}}  3\right)^{1/2} \, \boldsymbol{r}^{3/4} \ t^{5/4}= {4\over \sqrt 3}\,t^{5/4}\approx 2.31\, t^{5/4}.
\]
This seems again consistent with the simulations (see Fig.~\ref{chap_cemracs_sex:diff_lambda:fig1}). This shows that when the segregational variance $\lambda^2>0$ is strong, the individuals can have individuals with better dispersive trait, which accelerate the invasion. In Fig.~\ref{chap_cemracs_sex:diff_lambda:film}, we see that the approximation given by the associated article is accurate. 

\begin{figure}[!h]
    \centering
%    \subfloat[]{\includegraphics[scale=0.4]{}} \hspace{0.3cm}
    \subfloat[]{\includegraphics[scale=0.65]{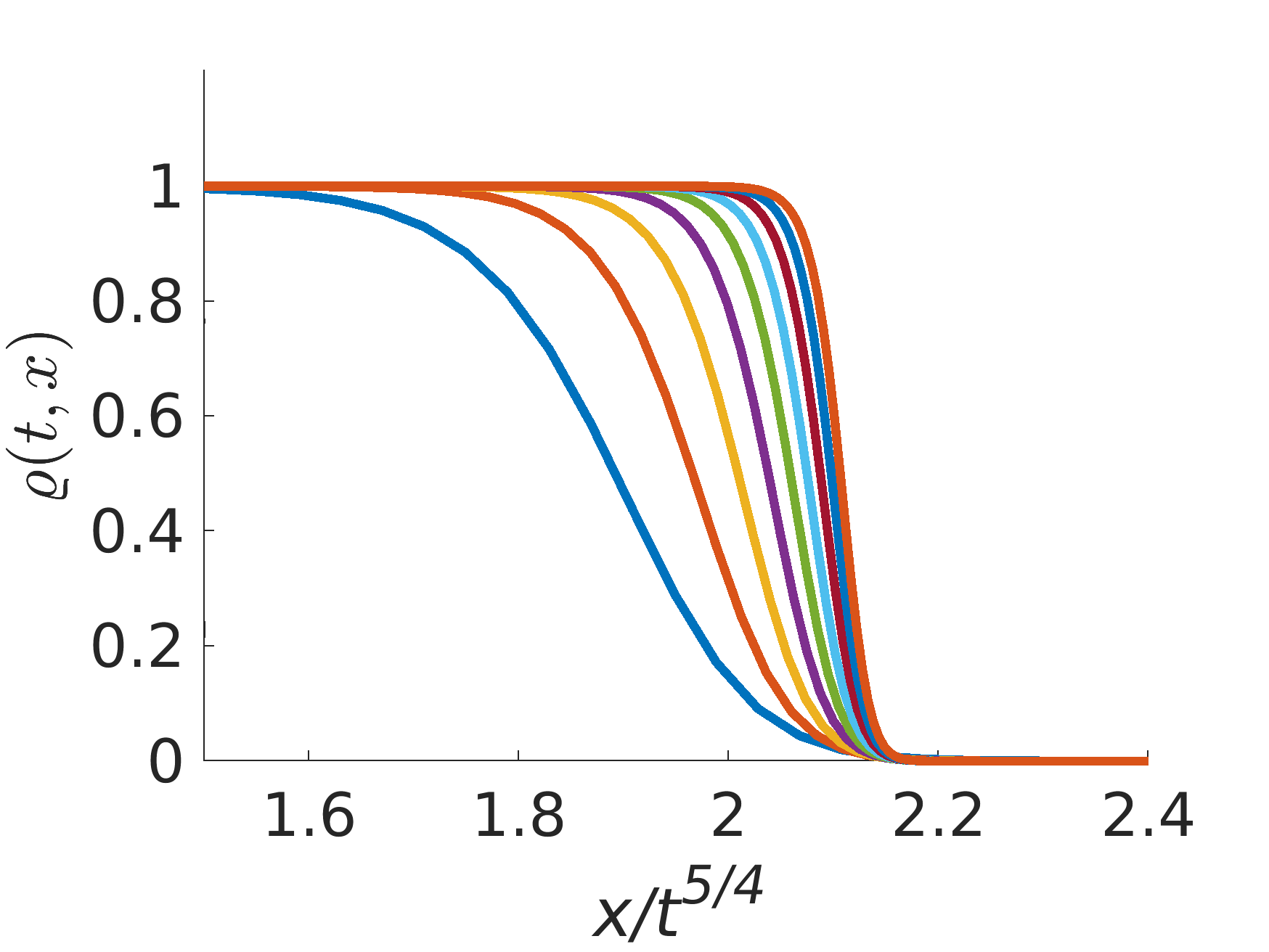}} 
\\    \subfloat[]{\includegraphics[scale=0.65]{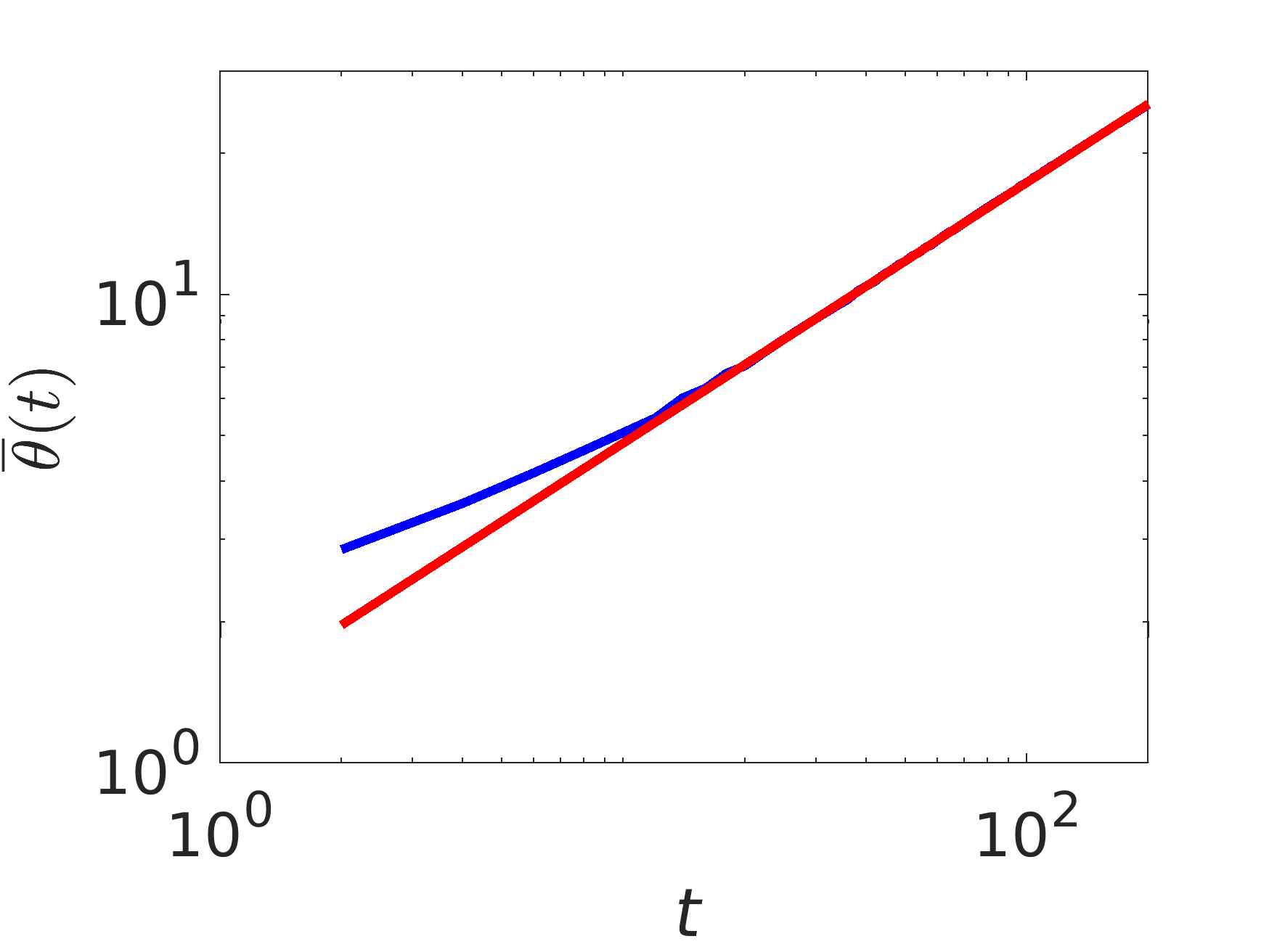}}     \caption[Simulations of the invasion of a sexual population, initially gaussian distributed, with a different segregational variance]{\textbf{ Simulations of the invasion of a sexual population, initially gaussian distributed, with a different segregational variance ($\lambda^2=1$)}, with parameters $\delta t =  0.02$, $\delta x = 4$, $\delta \theta =2 /3$, $x_{\max} = 3000$ and $\theta_{\max} = 201$. (a) Plot of the {\color{Black}population size} $\varrho(t,\cdot)$ for successive fixed times at regular intervals from $t=20$ to $t=200$, with respect to the auto - similar variable $x t^{ - 5/4}$. (b) Plot of the {\color{Black} mean of the dispersive trait} $\overline{\theta}^{num}(t)$  at the front position with respect to time (blue curve) and of the function $t \to  {\color{Black}1.34}\, {t}^{\color{Black}0.56}$ (red curve), in $\log - \log$ scale. The last funtion is given by a linear regression of the {\color{Black} mean of the dispersive trait}, with $R^2=1$.}
    \label{chap_cemracs_sex:diff_lambda:fig1}
\end{figure}

\begin{figure}[!h]
    \centering
    \subfloat[$t=50$]{\includegraphics[scale=0.4]{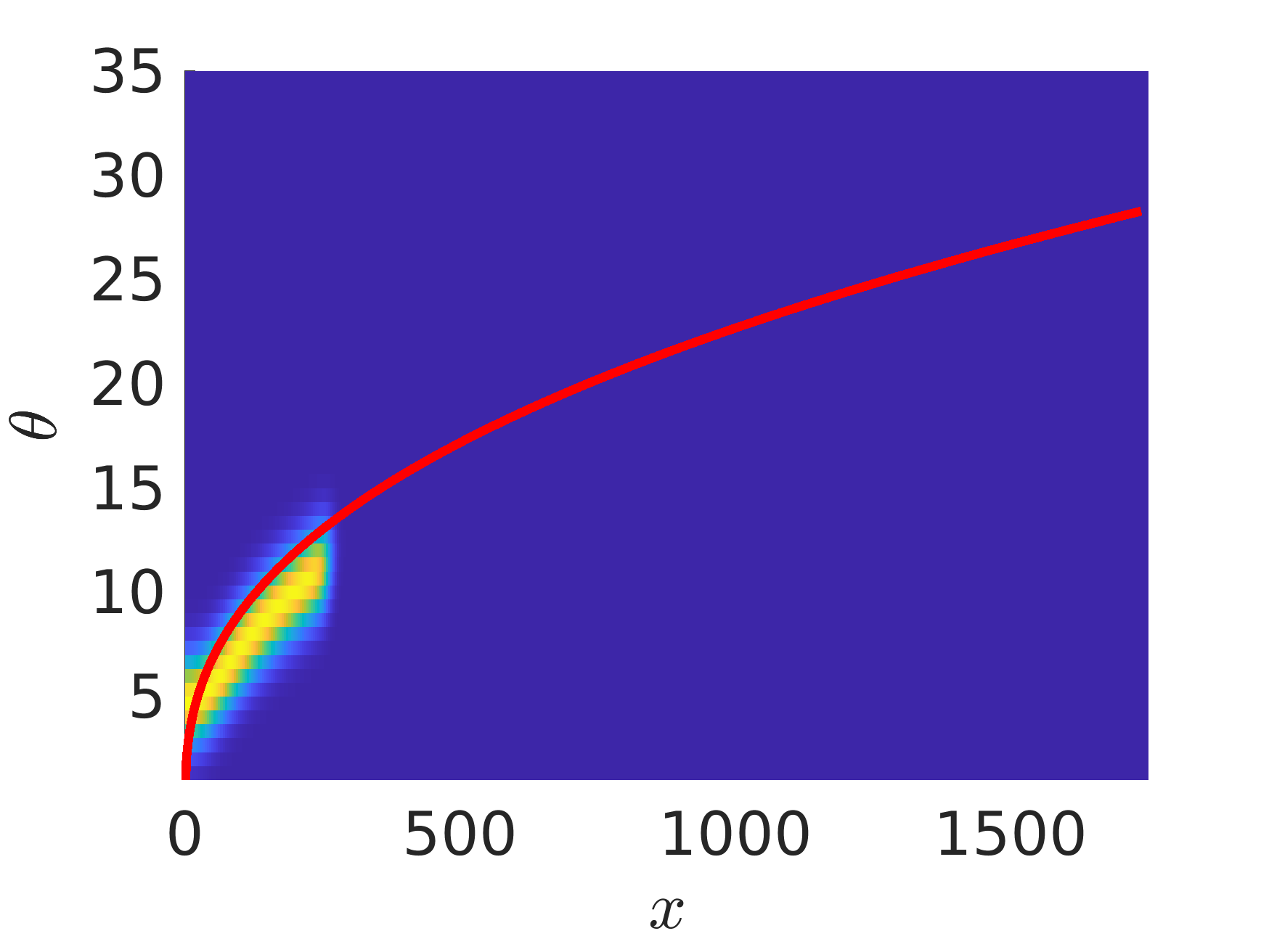}}\hspace{0.3cm}
    \subfloat[$t=100$]{\includegraphics[scale=0.4]{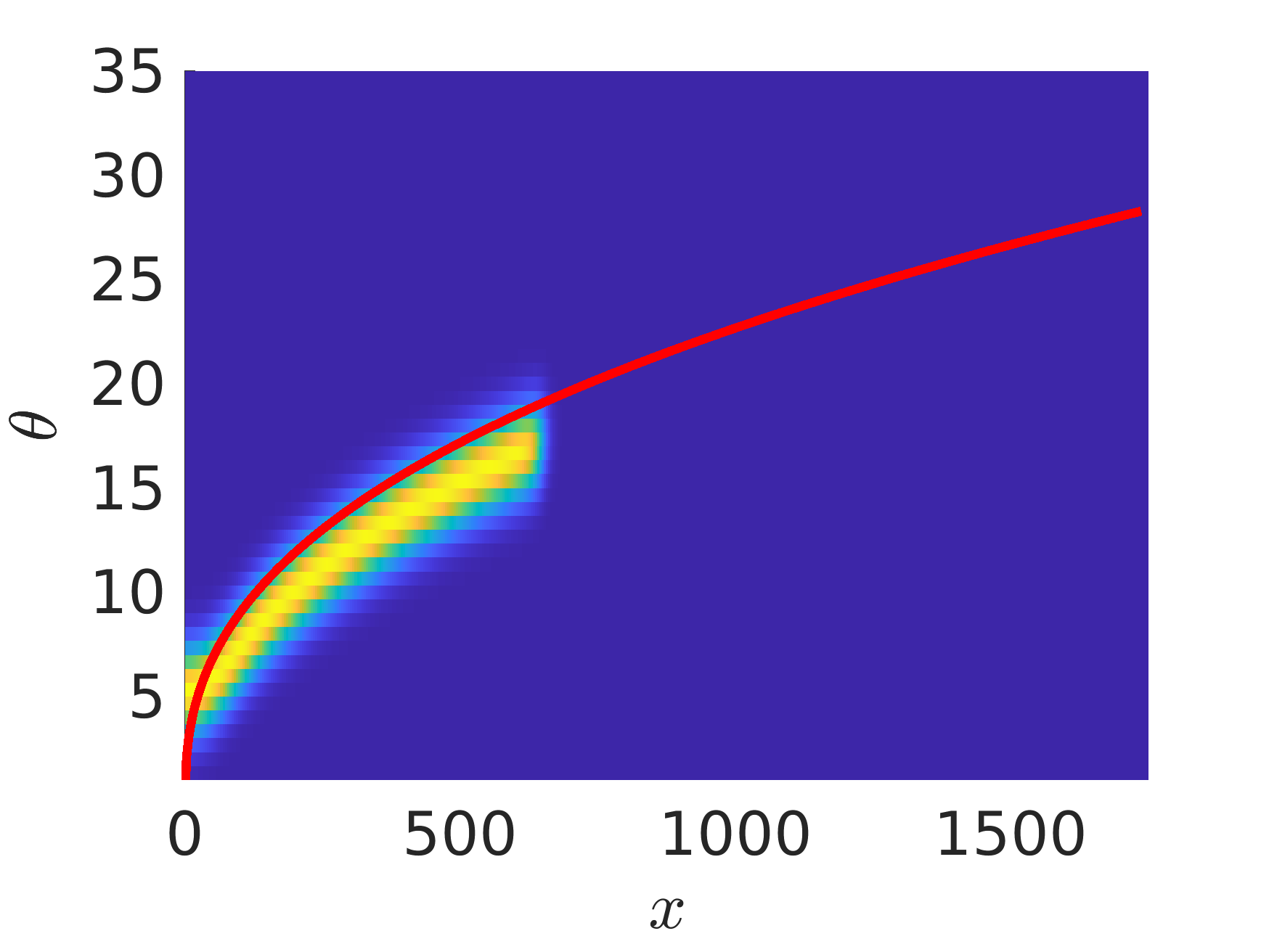}}\\
    \subfloat[$t=150$]{\includegraphics[scale=0.4]{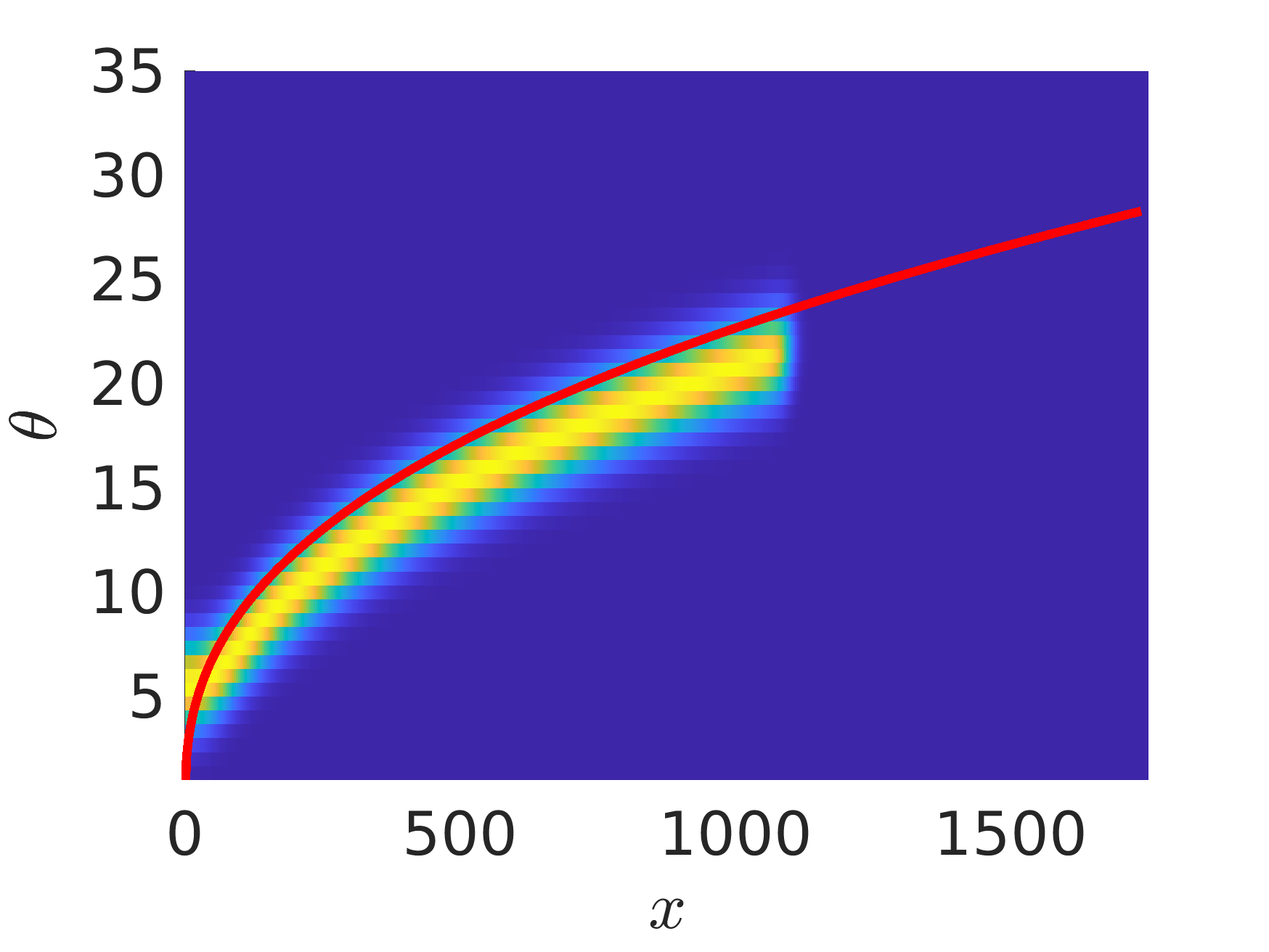}}\hspace{0.3cm}
    \subfloat[$t=200$]{\includegraphics[scale=0.4]{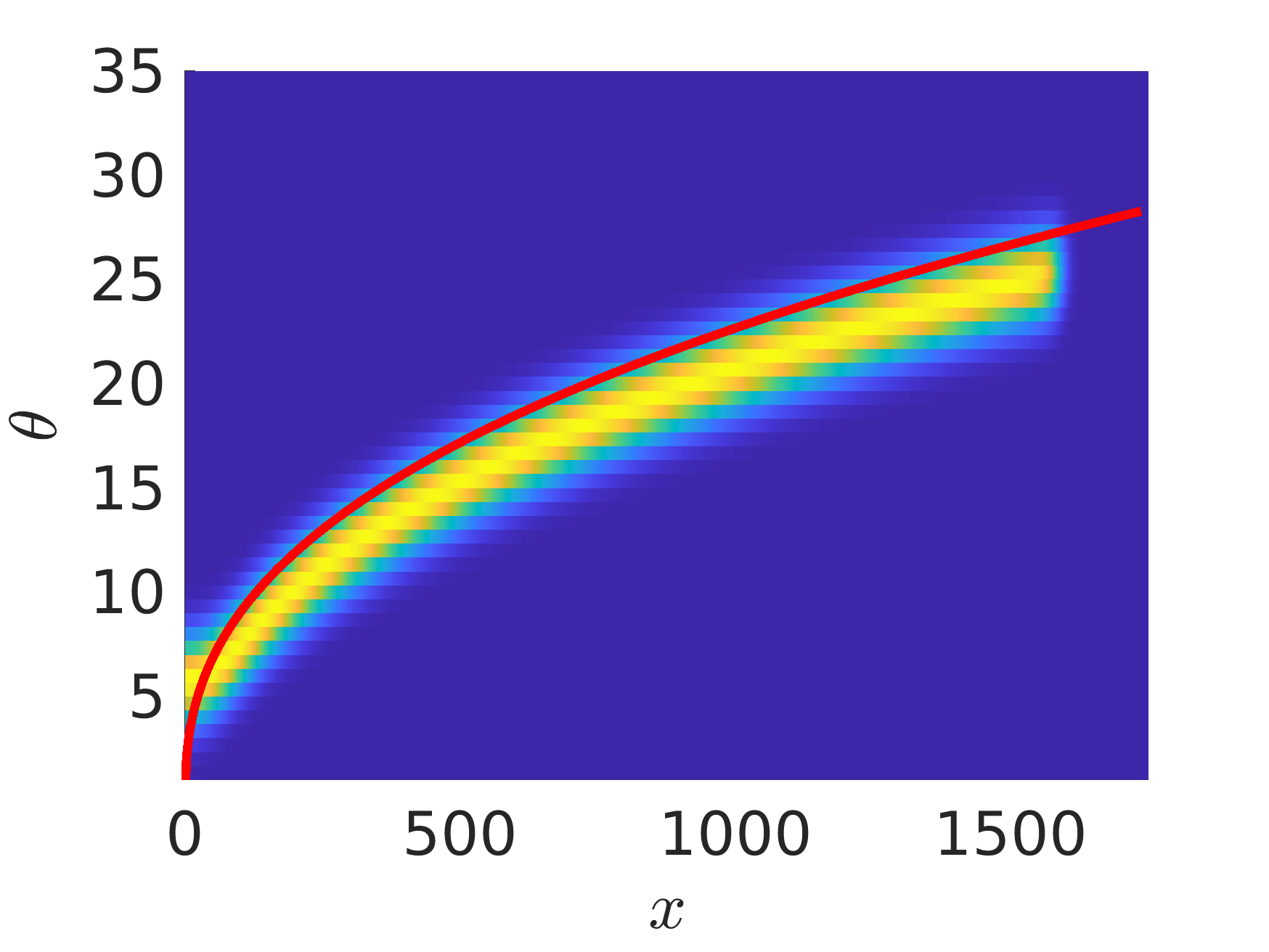}}
    \caption[Contour lines of the trait distribution during the invasion of a sexual population, initially gaussian distributed, with a different segregational variance]{\textbf{Contour lines of the trait distribution during the invasion of a sexual population, initially gaussian distributed, with a different segragional variance ($\lambda^2=1$)}, given by simulations, at (a) $t=50$ (b) $t=100$ (c) $t=150$ (d) $t=200$. The parameters are $\delta t =  0.02$, $\delta x = 4$, $\delta \theta =2 /3$, $x_{\max} = 3000$ and $\theta_{\max} = 201$.  The red line represents the approximation of the mean  trait behind the {\color{Black}propagating front}.}
    \label{chap_cemracs_sex:diff_lambda:film}
\end{figure}